\definecolor{trp}{rgb}{1,1,1}
\definecolor{red}{rgb}{1,0,.2}
\newtheorem{theorem}{Theorem}[section]
\theoremstyle{plain}
\newtheorem{definition}[theorem]{Definition}
\newtheorem{lemma}[theorem]{Lemma}
\newtheorem{prop}[theorem]{Proposition}
\newtheorem{remark}[theorem]{Remark}
\newtheorem*{ass}{Assumption~A}
\numberwithin{equation}{section}
\newcommand{\R}{\mathbb{R}}
\newcommand{\N}{\mathbb{N}}
\newcommand{\proj}{\mathrm{proj}}
\newcommand{\lv}{\boldsymbol\lambda}
\newcommand{\ev}{\boldsymbol\varepsilon}
\newcommand{\vv}{v}
\newcommand{\yv}{y}
\newcommand{\ii}{\mathbf{i}}
\newcommand{\jj}{\mathbf{j}}
\newcommand{\xv}{x}
\newcommand{\tv}{t}
\newcommand{\0}{\boldsymbol{0}}
\newcommand{\1}{\boldsymbol{N\!\!-\!\!1}}
\newcommand{\iiv}{\overline{\imath}}
\newcommand{\jjv}{\overline{\jmath}}
\newcommand{\al}[1]{\langle #1\rangle}
\begin{document}
\title[Pointwise regularity of parameterized affine zipper fractal curves]{Pointwise regularity of parameterized affine zipper fractal curves}

\author{Bal\'azs B\'ar\'any}
\address{Bal\'azs B\'ar\'any, Budapest University of Technology and Economics, MTA-BME Stochastics Research Group, P.O.Box 91, 1521 Budapest, Hungary\newline
        Mathematics Institute, University of Warwick, Coventry CV4 7AL, UK} \email{balubsheep@gmail.com}

\author{Gergely Kiss}
\address{Gergely Kiss, Budapest University of Technology and Economics, MTA-BME Stochastics Research Group, P.O.Box 91, 1521 Budapest, Hungary \newline
  Faculty of Science, University of Luxembourg, Luxembourg } \email{kigergo57@gmail.com}

\author{Istv\'an Kolossv\'ary}
\address{Istv\'an Kolossv\'ary, Budapest University of Technology and Economics,  Institute of Mathematics, Department of Stochastics;  \newline MTA Alfr\'ed R\'enyi Institute of Mathematics} \email{istvanko@math.bme.hu}

\thanks{ 2010 {\em Mathematics Subject Classification.} Primary 28A80 Secondary 26A27 26A30
\\ \indent
{\em Key words and phrases.} affine zippers, pointwise H\"{o}lder exponent, multifractal analysis, pressure function, iterated function system, de Rham curve}

\begin{abstract}
We study the pointwise regularity of zipper fractal curves generated by affine mappings. Under the assumption of dominated splitting of index-1, we calculate the Hausdorff dimension of the level sets of the pointwise H\"older exponent for a subinterval of the spectrum. We give an equivalent characterization for the existence of regular pointwise H\"older exponent for Lebesgue almost every point. In this case, we extend the multifractal analysis to the full spectrum. In particular, we apply our results for de Rham's curve.
\end{abstract}
\date{\today}

\maketitle

\thispagestyle{empty}

\section{Introduction and Statements}

Let us begin by recalling the general definition of fractal curves
from Hutchinson~\cite{Hutchinson81} and Barnsley~\cite{Barnsley}.

\begin{definition}\label{def:zipper}
    A system $\mathcal{S} =\left\{f_0,\dots, f_{N-1}\right\}$
    of contracting mappings of $\R^d$ to itself is called a \textnormal{zipper} with vertices $Z=\left\{z_0,\dots, z_N\right\}$
    and signature $\ev = (\varepsilon_0, \dots, \varepsilon_{N-1})$, $\varepsilon_i\in\left\{0,1\right\}$, if the cross-condition
    $$ f_i(z_0)=z_{i+\varepsilon_i} \text{ and } f_i(z_N) = z_{i+1-\varepsilon_i}$$
    holds for every $i = 0,\dots,N-1$. We call the system a \textnormal{self-affine zipper }if the functions $f_i$ are affine contractive mappings of the form
    \begin{equation*}
    f_i(x) = A_i x +t_i, \text{ for every } i\in\{0,1,\ldots,N-1\},
    \end{equation*}
    where $A_i\in\R^{d\times d}$ invertible and $t_i\in\R^d$.

    The \textnormal{fractal curve} generated from $\mathcal{S}$ is the unique non-empty compact set $\Gamma$, for which
    $$
    \Gamma=\bigcup_{i=0}^{N-1}f_i(\Gamma).
    $$
    If $\mathcal{S}$ is an affine zipper then we call $\Gamma$ a \textnormal{self-affine curve}.
\end{definition}

\begin{figure}
    \centering
    \includegraphics[width=60mm]{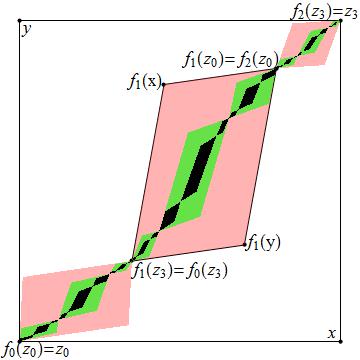}
    \caption{An affine zipper with $N=3$ maps and signature $\ev = (0, 1, 0)$.}\label{fig:zipper}
\end{figure}
For an illustration see Figure \ref{fig:zipper}. It shows the first
({\color{red}red}), second ({\color{green}green}) and third (black)
level cylinders of the image of $[0, 1]^2$. The cross-condition
ensures that $\Gamma$ is a continuous curve.

The dimension theory of self-affine curves is far from being well
understood. The Hausdorff dimension of such curves is known only in
a very few cases. The usual techniques, like self-affine
transversality, see Falconer \cite{FalconerSA}, Jordan, Pollicott
and Simon \cite{JordanPollicottSimon}, destroys the curve structure.
Ledrappier~\cite{Ledrappier} gave a sufficient condition to
calculate the Hausdorff dimension of some fractal curves, and
Solomyak~\cite{Solomyak} applied it to calculate the dimension of
the graph of the Takagi function for typical parameters. Feng and
K{\"a}enm{\"a}ki~\cite{feng2016self} characterized self-affine
systems, which has analytic curve attractor. Let us denote the
$s$-dimensional Hausdorff measure and the Hausdorff dimension of a
set $A$ by $\mathcal{H}^s(A)$ and $\dim_HA$, respectively. Moreover,
let us denote the Packing and (upper) box-counting dimension of a
set $A$ by $\dim_PA$ and $\overline{\dim}_BA$, respectively. For
basic properties and definition of Hausdorff-, Packing- and
box-counting dimension, we refer to \cite{Falconer}.

Bandt and Kravchenko~\cite{BandtKrav} studied some smoothness
properties of self-affine curves, especially the tangent lines of
planar self-affine curves. The main purpose of this paper is to
analyse the pointwise regularity of affine curves under some
parametrization. Let us recall the definition of pointwise H\"older
exponent of a real valued function $g$, see for example \cite[eq.
(1.1)]{Jaffard1997I}. We say that $g\in C^{\beta}(x)$ if there exist
a $\delta>0$, $C>0$ and a polynomial $P$ with degree at most
$\lfloor\beta\rfloor$ such that
$$
|g(y)-P(y-x)|\leq C|x-y|^{\beta}\text{ for every $y\in B_{\delta}(x)$,}
$$
where $B_{\delta}(x)$ denotes the ball with radius $\delta$ centered
at $x$. Let $\alpha_p(x)=\sup\{\beta:g\in C^{\beta}(x)\}$. We call
$\alpha_p(x)$ the {\it pointwise H\"older exponent }of $g$ at the
point $x$.

We call $F\colon\R^m\mapsto\R$ a {\it self-similar function} if
there exists a bounded open set $U\subset\R^m$, and
 contracting similarities $g_1,\ldots,g_k$ of $\R^m$ such that
$g_i(U)\cap g_j(U)=\emptyset$ and $g_i(U)\subset U$ for every $i\neq
j$, and a smooth function $g\colon\R^m\mapsto\R$, and real numbers
$|\lambda_i|<1$ for $i=1,\ldots,k$ such that
\begin{equation}\label{eq:selfsim}
F(x)=\sum_{i=1}^k\lambda_iF(g_i^{-1}(x))+g(x),
\end{equation}
see \cite[Definition~2.1]{Jaffard1997II}. The multifractal formalism
of the pointwise H\"older exponent of self-similar functions was
studied in several aspects, see for example Aouidi and Slimane
\cite{Aouidi2004}, Slimane
\cite{Slimane2001,Slimane2003,Slimane2012} and Saka \cite{Saka2005}.

Hutchinson \cite{Hutchinson81} showed that the family of contracting
functions $g_1,\ldots,g_k$ has a unique, non-empty compact invariant
set $\Omega$ (called the {\it attractor of
$\Phi=\{g_1,\ldots,g_k\}$}), i.e.
$\Omega=\bigcup_{i=1}^kg_i(\Omega)$. We note that in the case of
self-similar function, the graph of $F$ (denoted by
$\mathrm{Graph}(F)$) over the set $\Omega$ can be written as the
unique, non-empty, compact invariant set of the family of functions
$S_1,\ldots,S_k$ in $\R^{m+1}$, where
$$
S_i(x,y)=(g_i(x),\lambda_iy+g(g_i(x))).
$$

In this paper, we study the local regularity of a generalized
version of self-similar functions. Namely, let
$\lv=(\lambda_0,\dots,\lambda_{N-1})$ be a probability vector. Let
us subdivide the interval $[0,1]$ according to the probability
vector $\lv$ and signature $\ev= (\varepsilon_0, \dots,
\varepsilon_{N-1})$, $\varepsilon_i\in\left\{0,1\right\}$ of the
zipper $\mathcal{S}$. Let $g_i$ be the linear function mapping the
unit interval [0,1] to the $i$th subinterval of the division which
is order-preserving or order-reversing according to the signature
$\varepsilon_i$. That is, the interval $[0,1]$ is the attractor of
the iterated function system
\begin{equation}\label{eq:paramIFS}
\Phi=\left\{g_i:x\mapsto (-1)^{\varepsilon_i}\lambda_ix+\gamma_i\right\}_{i=0}^{N-1},
\end{equation}
where $\gamma_i=\sum_{j=0}^{i-1}\lambda_j+\varepsilon_i\lambda_i$.
Let $v:[0,1]\mapsto\Gamma\subset\R^d$ be the unique continuous
function satisfying the functional equation
\begin{equation}\label{eq:linparam}
v(x)=f_i\left(v(g_i^{-1}(x))\right)\text{ if }x\in g_i([0,1]).
\end{equation}
We note that
$g_i^{-1}(x)=\frac{x-\gamma_i}{(-1)^{\varepsilon_i}\lambda_i}$, and
$g_i^{-1}(x)\in[0,1]$ if and only if $x\in g_i([0,1])$. Thus,
$\mathrm{Graph}(v)$ is the attractor of the IFS
 $$
 S_i(x,y)=(g_i(x),A_iy+t_i).
 $$
 We call $v$ as the \textit{linear parametrization of $\Gamma$}.
 Such linear parameterizations occur in the study of Wavelet functions in a natural way, see for example Protasov~\cite{Protasov06}, Protasov and Guglielmi \cite{PG}, and Seuret \cite{Seuret2016}. A particular example for \eqref{eq:linparam} is the de Rham's curve, see Section~\ref{Sec:Ex_deRham} for details including an example of a graph of $v$ generated by the de Rham's curve.

The main difference between the self-similar function $F$ defined in
\eqref{eq:selfsim} and $v$ defined in \eqref{eq:linparam} is the
contraction part. Namely, while $F$ is a real valued function
rescaled by only a real number, the function $v$ is $\R^d$ valued
and a strict affine transformation is acting on it. This makes the
study of such functions more difficult. As a slight abuse of the
appellation of the pointwise H\"older exponent, we redefine the
pointwise H\"older exponent $\alpha(x)$ of the function $v$ at a
point $x\in[0,1]$ as
\begin{equation}\label{eq:alpha1}
\alpha(x)=\liminf_{y\to x}\frac{\log\|v(x)-v(y)\|}{\log|x-y|}.
\end{equation}
We note that if $\alpha_p(x)<1$ or $\alpha(x)<1$ then
$\alpha_p(x)=\alpha(x)$. Otherwise, we have only
$\alpha(x)\leq\alpha_p(x)$.

When the $\liminf$ in \eqref{eq:alpha1} exists as a limit, then we
say that $v$ has a regular pointwise H\"older exponent $\alpha_r(x)$
at a point $x\in[0,1]$, i.e.
\begin{equation}
\alpha_r(x)=\lim_{y\to x}\frac{\log\|v(x)-v(y)\|}{\log|x-y|}. \label{eq:alpha2}
\end{equation}

Let us define the level sets of the (regular) pointwise H\"older exponent by
\begin{align*}
E(\beta)&=\left\{x\in[0,1]:\alpha(x)=\beta\right\}\text{ and} \\
E_r(\beta)&=\left\{x\in[0,1]:\alpha_r(x)=\beta\right\}.
\end{align*}

Our goal is to perform multifractal analysis, i.e. to study the
possible values, which occur as (regular) pointwise H\"older
exponents, and determine the magnitude of the sets, where it
appears. This property was studied for several types of singular
functions, for example for wavelets by  Barral and Seuret
\cite{BarralSeuret}, Seuret \cite{Seuret2016},  for Weierstrass-type
functions Otani \cite{Otani}, for complex analogues of the Takagi
function by Jaerisch and Sumi \cite{Sumi} or for different
functional equations by Coiffard, Melot and Willer \cite{CMW}, by
Okamura \cite{Okamura} and by Slimane \cite{Slimane2003} etc.

The main difficulty in our approach is to handle the distance
$\|v(x)-v(y)\|$. In the previous examples, the function $F$ defined
with the equation \eqref{eq:selfsim} was scaled only by a constant.
Roughly speaking
$$
\|F(g_{i_1,\ldots,i_n}(x))-F(g_{i_1,\ldots,i_n}(y))\|\approx |\lambda_{i_1}\cdots\lambda_{i_n}|\|F(x)-F(y)\|.
$$
In the case of self-affine systems, this is not true anymore. That is,
$$
\|v(g_{i_1,\ldots,i_n}(x))-v(g_{i_1,\ldots,i_n}(y))\|\approx\|A_{i_1}\cdots A_{i_n}(v(x)-v(y))\|.
$$
However, in general $\|A_{i_1}\cdots
A_{i_n}(v(x)-v(y))\|\not\approx\|A_{i_1}\cdots
A_{i_n}\|\|v(x)-v(y)\|$. In order to be able to compare the distance
$\|v(g_{i_1,\ldots,i_n}(x))-v(g_{i_1,\ldots,i_n}(y))\|$ with the
norm of the product of matrices, we need an extra assumption on the
family of matrices.

Let us denote by $M^o$ the interior and by $\overline{M}$ the
closure of a set $M\subseteq \mathbb{PR}^{d-1}$. For a point
$v\in\R^d$, denote $\langle v\rangle$ equivalence class of $v$ in
the projective space $\mathbb{PR}^{d-1}$. Every invertible matrix
$A$ defines a natural map on the projective space
$\mathbb{PR}^{d-1}$ by $\al{v}\mapsto\al{Av}$. As a slight abuse of
notation, we denote this function by $A$ too.

\begin{definition}\label{def:domsplit}
    We say that a family of matrices $\mathcal{A}=\left\{A_0,\dots,A_{N-1}\right\}$ have \textnormal{dominated splitting of index-1} if there exists a non-empty open subset $M\subset\mathbb{PR}^{d-1}$ with a finite number of connected components with pairwise disjoint closure such that $$\bigcup_{i=0}^{N-1}A_i\overline{M}\subset M^{o},$$
    and there is a $d-1$ dimensional hyperplane that is transverse to all elements of $\overline{M}$. We call the set $M$ a multicone.
\end{definition}

We adapted the definition of dominated splitting of index-1 from the
paper of Bochi and Gourmelon \cite{BochiGour}. They showed that the
tuple of matrices $\mathcal{A}$ satisfies the property in
Definition~\ref{def:domsplit} if and only if there exist constants
$C>0$ and $0<\tau<1$ such that
$$
\frac{\alpha_2(A_{i_1}\cdots A_{i_{n}})}{\alpha_1(A_{i_1}\cdots A_{i_{n}})}\leq C\tau^n
$$
for every $n\geq1$ and $i_0,\ldots,i_{n-1}\in\{0,\ldots,N-1\}$,
where $\alpha_i(A)$ denotes the $i$th largest singular value of the
matrix $A$. That is, the weakest contracting direction and the
stronger contracting directions are strongly separated away
(splitted), $\alpha_1$ dominates $\alpha_2$. This condition makes it
easier to handle the growth rate of the norm of matrix products,
which will be essential in our later studies.

We note that for example a tuple $\mathcal{A}$ formed by matrices
with strictly positive elements, satisfies the dominated splitting
of index-1 of $M=\langle\{x\in\R^d:x_i>0,i=1,\ldots,d\}\rangle$.
Throughout the paper we work with affine zippers, where we assume
that the matrices $A_i$ have dominated splitting of index-1. For
more details, see Section~\ref{sec:ds} and \cite{BochiGour}.

For a subset $M$ of $\mathbb{PR}^{d-1}$ and a point $\xv\in\R^d$,
let $$M(\xv)=\{y\in\R^d:\langle\yv-\xv\rangle\in M\}.$$ We call the
set $M(\xv)$ a cone centered at $\xv$.

We say that $\mathcal{S}$ satisfies the \textit{strong open set
condition} (SOSC), if there exists an open bounded set $U$ such that
$$
f_i(U)\cap f_j(U)=\emptyset\text{ for every }i\neq j\text{ and }\Gamma\cap U\neq\emptyset.
$$
We call $\mathcal{S}$ a \textit{non-degenerate system}, if it
satisfies the SOSC and $\langle z_N-z_0\rangle\notin
\bigcap_{k=0}^{\infty}\bigcap_{|\iiv|=k} A_{\iiv}^{-1}(M^c)$, where
for a finite length word $\iiv=i_1\ldots i_k$, $A_{\iiv}$ denotes
the matrix product $A_{i_1}A_{i_2}\ldots A_{i_k}$. We note that the
non-degenerate condition guarantees that the curve
$v\colon[0,1]\mapsto\R^d$ is not self-intersecting and it is not
contained in a strict hyperplane of $\R^d$.

Denote by $P(t)$ the pressure function which is defined as the unique root of the equation
\begin{equation}\label{eq:pressuredef1}
0= \lim_{n\to \infty}\frac{1}{n}\log \sum_{i_1,\dots,i_n=0}^{N-1}\|A_{i_1}\cdots A_{i_n}\|^t\left(\lambda_{i_1}\cdots\lambda_{i_n}\right)^{-P(t)}.
\end{equation}

A considerable attention has been paid for pressures, which are
defined by matrix norms, see for example K\"aenm\"aki
\cite{Kaenmaki}, Feng and Shmerkin \cite{FengShmerkin}, and Morris
\cite{Morris1, Morris2}. Feng \cite{FengLyap1-2003} and later Feng
and Lau \cite{FengLau_pressure} studied the properties of the
pressure $P$ for positive and non-negative matrices. In
Section~\ref{sec:ds}, we extend these results for the dominated
splitting of index-1 case. Namely, we will show that the function
$P:\R\mapsto\R$ is continuous, concave, monotone increasing, and
continuously differentiable.

Unfortunately, even for positive matrices, the computation of the
precise values of $P(t)$ is hopeless. For a fast approximation
algorithm, see Pollicott and Vytnova~\cite{pollicott2015estimating}.

Let $d_0>0$ be the unique real number such that
\begin{equation}\label{eq:d0}
0= \lim_{n\to \infty}\frac{1}{n}\log \sum_{|\iiv|=n}\|A_{\iiv}\|^{d_0}.
\end{equation}
Observe that for every $n\geq1$, $\{f_{\iiv}(U):|\iiv|=n\}$ defines
a cover of $\Gamma$. But since $\Gamma$ is a curve and thus
$\dim_H\Gamma\geq1$, and since every $f_{\iiv}(U)$ can be covered by
a ball with radius $\|A_{\iiv}\||U|$, $d_0\geq1$.

Let $$\alpha_{\min}=\lim_{t\to+\infty}\frac{P(t)}{t},\ \alpha_{\max}=\lim_{t\to-\infty}\frac{P(t)}{t}\text{ and }\widehat{\alpha}=P'(0).$$

The values $\alpha_{\min}$ and $\alpha_{\max}$ correspond to the
logarithm of the joint- and the lower-spectral radius defined by
Protasov \cite{Protasov06}.

We say that the function $v:[0,1]\mapsto\Gamma$ is \textit{symmetric} if
\begin{equation}\label{esymm}
\lambda_0=\lambda_{N-1}\text{ and }\lim_{k\to\infty}\frac{\|A_0^{k}\|}{\|A_{N-1}^k\|}=1.
\end{equation}

Now, we state our main theorems on the pointwise H\"older exponents.

\begin{theorem}\label{thm:nondeg}
    Let $\mathcal{S}$ be a non-degenerate system. Then there exists a constant $\widehat{\alpha}$ such that for $\mathcal{L}$-a.e. $x\in[0,1]$, $\alpha(x)=\widehat{\alpha}\geq1/d_0$. Moreover, there exists an $\varepsilon>0$ such that for every $\beta\in[\widehat{\alpha},\widehat{\alpha}+\varepsilon]$
    \begin{equation}\label{eq:nondegmulti}
    \dim_H\left\{x\in[0,1]:\alpha(x)=\beta\right\}=\inf_{t\in\R}\{t\beta-P(t)\}.
    \end{equation}

If $\mathcal{S}$ satisfies \eqref{esymm} then \eqref{eq:nondegmulti} can be extended for every $\beta\in[\alpha_{\min},\widehat{\alpha}+\varepsilon]$.

Furthermore, the functions $\beta\mapsto \dim_H E(\beta)$ and
$\beta\mapsto \dim_H E_r(\beta)$ are continuous and concave on their
respective domains.
\end{theorem}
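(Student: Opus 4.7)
The plan is to translate the analytic question about $\alpha(x)$ into a symbolic/thermodynamic one along the coding induced by the parametrisation IFS $\Phi$. For $\iiv = i_1\cdots i_n$, write $g_{\iiv} = g_{i_1}\circ\cdots\circ g_{i_n}$ and $\lambda_{\iiv} = \lambda_{i_1}\cdots\lambda_{i_n}$; iterating \eqref{eq:linparam} gives $v(g_{\iiv}(x)) - v(g_{\iiv}(y)) = A_{\iiv}\bigl(v(x) - v(y)\bigr)$. The first task is the geometric sandwich lemma: there exist constants $0 < c \le C < \infty$ such that
\[
c\,\|A_{\iiv}\|\,\|v(x) - v(y)\| \;\le\; \bigl\|A_{\iiv}(v(x) - v(y))\bigr\| \;\le\; C\,\|A_{\iiv}\|\,\|v(x) - v(y)\|
\]
whenever $\al{v(x)-v(y)} \in \overline{M}$. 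The upper bound is trivial; for the lower, dominated splitting of index $1$ forces products $A_{\iiv}$ to push every direction of $\overline{M}$ exponentially close to the top singular direction, so the operator norm is realised up to a bounded factor on such chords. The non-degeneracy hypothesis $\al{z_N-z_0} \notin \bigcap_k\bigcap_{|\iiv|=k} A_{\iiv}^{-1}(M^c)$ ensures that some chord of $\Gamma$ has direction inside $M$, and composing with suitable prefixes transports this property to all pairs sharing a sufficiently deep common cylinder. The sandwich then identifies, for $\mathcal L$-a.e.\ $x$ with address $(\iiv_n(x))_{n\ge 1}$,
\[
\alpha(x) \;=\; \liminf_{n\to\infty}\frac{\log \|A_{\iiv_n(x)}\|}{\log \lambda_{\iiv_n(x)}}.
\]

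For the almost-sure value, I would endow $\{0,\ldots,N-1\}^{\N}$ with the Bernoulli measure $\mathbb{B}$ of weights $\lv$, whose pushforward by the coding is Lebesgue measure on $[0,1]$. Kingman's subadditive ergodic theorem gives $n^{-1}\log \|A_{\iiv_n}\| \to \chi$ (top Lyapunov exponent) $\mathbb{B}$-a.s., and the strong law gives $n^{-1} \log \lambda_{\iiv_n} \to \sum_i \lambda_i \log \lambda_i$. Hence $\alpha(x) = \widehat\alpha := \chi/\sum_i\lambda_i\log\lambda_i$ for $\mathcal L$-a.e.\ $x$. Implicit differentiation of \eqref{eq:pressuredef1} at $t=0$ (where $P(0) = -1$ and the partition function equals $\sum_\iiv\lambda_\iiv = 1$ exactly) yields $0 = E_{\mathbb{B}}[\log \|A_{\iiv_n}\|] - P'(0)\cdot E_{\mathbb{B}}[-\log \lambda_{\iiv_n}]$, identifying this constant with $P'(0)$. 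The lower bound $\widehat\alpha \ge 1/d_0$ then falls out of the concavity of $P$ proved in Section~\ref{sec:ds}: since $P(0) = -1$ and $P(d_0) = 0$, concavity gives $1 \le P'(0)\cdot d_0$.

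For the multifractal formula on $[\widehat\alpha, \widehat\alpha + \varepsilon]$, I would construct the Gibbs-type equilibrium state $\mu_t$ on the symbolic space with potential $t\log\|A_\cdot\| - P(t)\log \lambda_\cdot$, its existence and quasi-Bernoulli properties under dominated splitting of index $1$ following along the lines of \cite{Kaenmaki} together with the pressure regularity established in Section~\ref{sec:ds}. Writing $\widetilde\mu_t$ for its pushforward to $[0,1]$, a standard local-dimension computation using $\mu_t([\iiv]) \asymp \|A_{\iiv}\|^t \lambda_{\iiv}^{-P(t)}$ yields, for $\widetilde\mu_t$-a.e.\ $x$,
\[
\lim_{n\to\infty}\frac{\log \|A_{\iiv_n(x)}\|}{\log \lambda_{\iiv_n(x)}} \;=\; P'(t), \qquad \dim_H \widetilde\mu_t \;=\; t\,P'(t) - P(t),
\]
so that $\widetilde\mu_t(E_r(P'(t))) = 1$ and $\dim_H E_r(P'(t)) \ge t\,P'(t) - P(t)$. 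As $t$ ranges over a right neighbourhood of $0$, the continuous differentiability of $P$ makes $\beta(t) = P'(t)$ sweep an interval $[\widehat\alpha, \widehat\alpha + \varepsilon]$, and the Legendre-Fenchel identity $t\beta(t) - P(t) = \inf_s\{s\beta(t) - P(s)\}$ (from concavity of $P$) delivers the lower bound on $\dim_H E_r(\beta) \le \dim_H E(\beta)$. The matching upper bound on $\dim_H E(\beta)$ is proved by covering $E(\beta)$ with cylinders $g_\iiv([0,1])$ of size $\lambda_\iiv$ with $\|A_{\iiv}\| \le \lambda_\iiv^{\beta - o(1)}$: for each $t$, the inequality $\lambda_\iiv^{\beta t - P(t)} \le \|A_\iiv\|^t \lambda_\iiv^{-P(t)}$ and \eqref{eq:pressuredef1} give a Hausdorff sum bound at exponent $t\beta - P(t)$, whence $\dim_H E(\beta) \le \inf_t\{t\beta - P(t)\}$.

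The hardest step will be the sandwich of the first paragraph: without it, the scaling of $v$ cannot be read off a matrix-norm pressure, and this is exactly where the non-degeneracy condition earns its keep by preventing chords $v(x)-v(y)$ from aligning with the strongly contracted direction of $A_{\iiv}$. The symmetric case \eqref{esymm} extends the spectrum down to $\alpha_{\min}$ because only the two boundary cylinders $\iiv = 0^n$ and $\iiv = (N-1)^n$ realise the joint-spectral-radius regime; under \eqref{esymm} these contributions balance, allowing the Gibbs construction to be pushed to $t \to +\infty$ via measures localised near such boundary points and reproducing the Legendre formula on $[\alpha_{\min}, \widehat\alpha + \varepsilon]$. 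Finally, $\beta \mapsto \inf_t\{t\beta - P(t)\}$ is an infimum of functions that are affine in $\beta$, hence concave, and therefore continuous on the interior of its domain; since the same formula computes $\dim_H E(\beta)$ and $\dim_H E_r(\beta)$ on their respective domains, both spectra inherit continuity and concavity.
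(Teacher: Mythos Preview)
Your outline has the thermodynamic skeleton right (Gibbs states, the Legendre transform, $P'(0)=\widehat\alpha$, concavity giving $\widehat\alpha\ge 1/d_0$), but there are two genuine gaps where the argument as written would fail.

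\textbf{The sandwich does not hold for all chords, and this is what forces the $\varepsilon$.} Your ``geometric sandwich lemma'' is only valid when $\al{v(x)-v(y)}\in\overline M$, as you say; but you then assert that non-degeneracy plus ``composing with suitable prefixes transports this property to all pairs sharing a sufficiently deep common cylinder.'' This is false in general: for a given $\ii$ there may be \emph{no} nearby $\jj$ with $\al{\Pi(\sigma^{\ii\wedge\jj}\ii)-\Pi(\sigma^{\ii\wedge\jj}\jj)}\in M$, and then one cannot compare $\|\Pi(\ii)-\Pi(\jj)\|$ with $\|A_{\ii|_{\ii\wedge\jj}}\|$. The paper handles this by defining an exceptional symbolic set $B$ of such bad points and proving $\dim_P\pi(B)<1$ (this is where non-degeneracy is actually used, to exhibit a forbidden cylinder). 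The upper bound on $\dim_H E(\beta)$ then reads
\[
\dim_H E(\beta)\le \max\Bigl\{\dim_H\pi(B),\ \inf_{t\le 0}\{t\beta-P(t)\}\Bigr\},
\]
and the interval $[\widehat\alpha,\widehat\alpha+\varepsilon]$ is precisely the range of $\beta$ for which the second term dominates the first. Your sketch does not produce any exceptional set, so you have no mechanism explaining why the formula is only claimed on a short interval; continuity of $P'$ is not the reason.

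\textbf{Euclidean versus symbolic distance, and the role of \eqref{esymm}.} You never address that $|\pi(\ii)-\pi(\jj)|$ is \emph{not} comparable to $\lambda_{\ii|_{\ii\wedge\jj}}$: points symbolically far apart can be Euclidean-close across cylinder boundaries, and the correction is governed by an auxiliary quantity $\ii\vee\jj$ counting how long the tails agree with $\0$ and $\1$. The a.e.\ identification of $\alpha(\pi(\ii))$ with the symbolic ratio requires a Borel--Cantelli argument ruling out long $0^k$ or $(N{-}1)^k$ runs too often; this is an extra step beyond Kingman. More seriously, your account of the symmetric case is off: \eqref{esymm} is not about ``pushing the Gibbs construction to $t\to+\infty$'' or ``measures near boundary points.'' It is used to prove the deterministic lower bound $\alpha(\pi(\ii))\ge\liminf_n\log\|A_{\ii|_n}\|/\log\lambda_{\ii|_n}$ for \emph{every} $\ii$, by ensuring that the ratios $\|A_{N-1}^{\ii\vee\jj}\|/\|A_0^{\ii\vee\jj}\|$ arising from the cylinder-boundary correction stay bounded; this is what yields the upper bound $\dim_H E(\beta)\le\inf_{t\ge 0}\{t\beta-P(t)\}$ on $[\alpha_{\min},\widehat\alpha]$ without any appeal to the exceptional set.
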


In the following, we give a sufficient condition to extend the
previous result, where \eqref{eq:nondegmulti} holds to the complete
spectrum $[\alpha_{\min},\alpha_{\max}]$. As a slight abuse of
notation for every $\theta\in\mathbb{PR}^{d-1}$, we say that
$\underline{0}\neq v\in\theta$ if $\al{v}=\theta$.

\begin{ass}
    For a nondegenerate affine zipper $\mathcal{S}=\left\{f_i:\xv\mapsto A_i\xv+\tv_i\right\}_{i=0}^{N-1}$ with vertices $\left\{z_0,\dots,z_N\right\}$ assume that there exists a convex, simply connected closed cone $C\subset\mathbb{PR}^{d-1}$ such that
    \begin{enumerate}
        \item $\bigcup_{i=1}^NA_iC\subset C^o$ and for every $\underline{0}\neq v\in\theta\in C$, $\langle A_i\vv,\vv\rangle>0$,
        \item $\langle z_N-z_0\rangle\in C^o$.
    \end{enumerate}
\end{ass}

Observe that if $\mathcal{S}$ satisfies Assumption A then it
satisfies the strong open set condition with respect to the set $U$,
which is the bounded component of $C^o(z_0)\cap C^o(z_N)$. We note
that if all the matrices have strictly positive elements and the
zipper has signature $(0,\dots,0)$ then Assumption~A holds.

\begin{theorem}\label{thm:assA}
    Let $\mathcal{S}$ be an affine zipper satisfying Assumption A.  Then for every $\beta\in[\widehat{\alpha},\alpha_{\max}]$
    \begin{equation}\label{eq:assAmulti}
    \dim_H\left\{x\in[0,1]:\alpha(x)=\beta\right\}=\inf_{t\in\R}\{t\beta-P(t)\},
    \end{equation}
and for every $\beta\in[\alpha_{\min},\alpha_{\max}]$
\begin{equation}\label{eq:assAmulti2}
\dim_H\left\{x\in[0,1]:\alpha_r(x)=\beta\right\}=\inf_{t\in\R}\{t\beta-P(t)\}.
\end{equation}
Moreover, if $\mathcal{S}$ satisfies \eqref{esymm} then \eqref{eq:assAmulti} can be extended for every $\beta\in[\alpha_{\min},\alpha_{\max}]$.

The functions $\beta\mapsto \dim_H E(\beta)$ and
    $\beta\mapsto \dim_H E_r(\beta)$ are continuous and concave on their
    respective domains.
\end{theorem}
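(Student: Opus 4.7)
The plan is to reduce the pointwise H\"older exponent to a purely symbolic quantity involving matrix-norm products and the probabilities $\lambda_i$, then run the standard thermodynamic and multifractal formalism based on the pressure $P$ and its regularity established in Section~\ref{sec:ds}. Writing $\pi\colon\{0,\ldots,N-1\}^{\N}\to[0,1]$ for the projection of the parametrising IFS $\Phi$, the target symbolic identity is
$$\alpha(\pi(\iiv))=\liminf_{n\to\infty}\frac{\log\|A_{i_1}\cdots A_{i_n}\|}{\log(\lambda_{i_1}\cdots\lambda_{i_n})},$$
with $\alpha_r(\pi(\iiv))$ given by the corresponding genuine limit when it exists.

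The first step is to exploit Assumption A. Forward invariance of the convex cone $C$, together with $\langle z_N-z_0\rangle\in C^o$ and the dominated splitting of index-1, forces every secant vector $v(x)-v(y)$ to sit inside a uniform closed subcone of $C$, on which each finite matrix product $A_{\iiv}$ acts essentially as multiplication by its norm. Concretely, there should exist $0<c\leq C'<\infty$ such that
$$c\,\|A_{\iiv}\|\,\|v(x)-v(y)\|\leq\|v(g_{\iiv}(x))-v(g_{\iiv}(y))\|\leq C'\,\|A_{\iiv}\|\,\|v(x)-v(y)\|$$
for every finite word $\iiv$ and all $x,y\in[0,1]$. Combined with $|g_{\iiv}(x)-g_{\iiv}(y)|=(\lambda_{i_1}\cdots\lambda_{i_n})|x-y|$, this delivers the symbolic formula whenever $y\to x$ through cylinders of ever-increasing prefix of $\iiv$. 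The condition $\langle A_iv,v\rangle>0$ on $C$ in Assumption A additionally controls pairs $(x,y)$ that straddle a cross-condition vertex $z_i$: it prevents secants from being flipped across the junction, so the two-sided comparison survives globally.

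For the multifractal formalism itself, for each $t\in\R$ standard Gibbs/equilibrium measure theory for the dominated-splitting subadditive potential $\iiv\mapsto\log\|A_{\iiv}\|$ furnishes an ergodic shift-invariant measure $\mu_t$ on $\{0,\ldots,N-1\}^{\N}$ whose $\mu_t$-almost sure symbolic scaling ratio equals $P'(t)$. Pushing $\mu_t$ forward under $\pi$ and applying Billingsley's lemma through the symbolic formula above, one obtains $\pi_*\mu_t$ supported on $E_r(P'(t))\subset E(P'(t))$ of Hausdorff dimension $\inf_s\{sP'(t)-P(s)\}$. As $t$ ranges over $\R$, the value $P'(t)$ sweeps $(\alpha_{\min},\alpha_{\max})$ by concavity of $P$, giving the claimed lower bound for both $E(\beta)$ and $E_r(\beta)$ throughout the stated ranges. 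The matching upper bound is the standard covering estimate from \eqref{eq:pressuredef1}: for any $s$ strictly greater than $\inf_t\{t\beta-P(t)\}$ the weighted cylinder sum with the prescribed approximate scaling has vanishing limit, yielding $\mathcal{H}^s(E_r(\beta))=0$ always, and $\mathcal{H}^s(E(\beta))=0$ via the analogous $t\leq 0$ argument whenever $\beta\geq\widehat{\alpha}$. The upper endpoint $\alpha_{\max}$ is recovered by a weak-$\ast$ limit of $\mu_t$ as $t\to-\infty$; the lower endpoint $\alpha_{\min}$ enters the $E(\beta)$ spectrum exactly under the symmetry condition \eqref{esymm}, because the two candidate extremal sequences $(0,0,\ldots)$ and $(N-1,N-1,\ldots)$ are then fixed points of $g_0$ and $g_{N-1}$ producing a common exponent, allowing a coherent boundary measure to be constructed; without \eqref{esymm} these two sequences give distinct exponents and only $\alpha_r$ can reach $\alpha_{\min}$ on a set of positive dimension. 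Continuity and concavity of $\beta\mapsto\dim_H E(\beta)$ and $\beta\mapsto\dim_H E_r(\beta)$ then follow automatically from concavity and $C^1$-regularity of $P$ via Legendre duality.

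The main obstacle I anticipate is the first step: making the bilateral comparison between $\|v(g_{\iiv}(x))-v(g_{\iiv}(y))\|$ and $\|A_{\iiv}\|\cdot\|v(x)-v(y)\|$ uniform in $\iiv,x,y$. For pairs inside a common cylinder this is essentially cone geometry, but at pairs separated by a cross-condition vertex $z_i$ one must rule out cancellation between different pieces of the curve, and this is precisely where the full strength of Assumption A (rather than mere dominated splitting) is required. Propagating the estimate coherently across every level of the cross-condition, and combining it with the non-degeneracy hypothesis to rule out degenerate secants, is the most delicate piece of bookkeeping in the argument.
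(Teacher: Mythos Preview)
Your overall strategy matches the paper's: under Assumption~A every secant direction $\langle\Pi(\ii)-\Pi(\jj)\rangle$ lies in the cone $C$ (this is exactly Proposition~\ref{prop:AssAdimB}, which shows the exceptional set $B$ is empty), and together with the cone estimate \eqref{eq:compare} one does obtain the bilateral comparison $\|A_{\iiv}(v(x)-v(y))\|\asymp\|A_{\iiv}\|\cdot\|v(x)-v(y)\|$ uniformly in $\iiv,x,y$. The Gibbs-measure lower bound and the covering upper bound are also what the paper uses.

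There is, however, a genuine gap in your first step. The bilateral comparison does \emph{not} deliver the pointwise identity $\alpha(\pi(\ii))=\liminf_n\frac{\log\|A_{\ii|_n}\|}{\log\lambda_{\ii|_n}}$ for every $\ii$, and the paper never claims this. The obstruction is metric, not directional: when $y=\pi(\jj)\to x=\pi(\ii)$ with $\ii\wedge\jj=n$, the remainder $|\pi(\sigma^n\ii)-\pi(\sigma^n\jj)|$ is not bounded below, since $\sigma^n\ii$ may begin with arbitrarily many $0$'s (or $(N-1)$'s) and sit arbitrarily close to a cross-condition vertex. Your sentence about $\langle A_iv,v\rangle>0$ ``controlling'' such pairs does not address this; the problem is the scale of $|x-y|$, not the direction of $v(x)-v(y)$. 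The paper handles it by introducing the quantity $\ii\vee\jj$ and proving two distinct facts: first, for \emph{every} $\ii$ one has $\alpha(\pi(\ii))\leq\limsup_n\frac{\log\|A_{\ii|_n}\|}{\log\lambda_{\ii|_n}}$ (Lemma~\ref{lem:ub}, using $B=\emptyset$), which is what feeds the $t\leq0$ covering for $\beta\geq\widehat\alpha$; second, for $\mu_t$-\emph{almost every} $\ii$ one has $\alpha_r(\pi(\ii))=\lim_n\dots$ (Lemmas~\ref{lem:erg} and~\ref{lem:ubassA}), by showing $\ii\vee\jj/(\ii\wedge\jj)\to0$ a.e. Finally, your explanation of the role of symmetry \eqref{esymm} is incorrect: it is not about constructing a boundary measure at $\alpha_{\min}$. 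Rather (Lemma~\ref{lem:lb}), it forces $\|A_0^k\|/\|A_{N-1}^k\|$ and $\lambda_0^k/\lambda_{N-1}^k$ to stay bounded, so the cross-vertex error contributes symmetrically to numerator and denominator and one obtains $\alpha(\pi(\ii))\geq\liminf_n\frac{\log\|A_{\ii|_n}\|}{\log\lambda_{\ii|_n}}$ for \emph{every} $\ii$; this pointwise lower bound is what enables the $t\geq0$ covering and extends \eqref{eq:assAmulti} down to $\alpha_{\min}$.
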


Assumption A has another important role. In Theorem~\ref{thm:assA},
we calculated the spectrum for the regular H\"older exponent,
providing that it exists. We show that the existence of the regular
H\"older exponent for Lebesgue typical points is equivalent to
Assumption A.

\begin{theorem}\label{thm:equiv}
Let $\mathcal{S}$ be a non degenerate system. Then the regular
H\"older exponent exists for Lebesgue almost every point if and only
if $\mathcal{S}$ satisfies Assumption A. In particular,
$\alpha_r(x)=P'(0)$ for Lebesgue almost every $x\in[0,1]$.
\end{theorem}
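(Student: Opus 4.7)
Under Assumption A, I first propagate the cone property to the curve itself: by induction on cylinder depth using \eqref{eq:linparam}, the sign-preservation $\langle A_i v, v\rangle > 0$, and the base case $v(1)-v(0) = z_N-z_0 \in C^o$ from A(2), every chord $\langle v(s)-v(t)\rangle$ for $s\neq t$ in $[0,1]$ lies in $C$. Since $C$ is compactly contained in the multicone $M$, a standard consequence of dominated splitting (the transverse $d-1$ dimensional hyperplane supplied by Definition~\ref{def:domsplit} stays bounded away from $C$) yields a universal $c>0$ with $\|A_\iiv w\| \geq c\|A_\iiv\|\|w\|$ whenever $\langle w\rangle \in C$ and $\iiv$ is any finite word. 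For $x\neq y$ sharing the length-$n$ coding prefix $\iiv_n$, the functional equation then gives
\[
\|v(x)-v(y)\| \asymp \|A_{\iiv_n}\| \cdot \|v(g_{\iiv_n}^{-1}(x)) - v(g_{\iiv_n}^{-1}(y))\|,
\]
with constants independent of $x,y,\iiv_n$, while $|x-y| \asymp \lambda_{\iiv_n}$. The pushforward of the Bernoulli measure $\mu$ on $\{0,\dots,N-1\}^{\N}$ with weights $\lv$ equals Lebesgue measure on $[0,1]$; Kingman's subadditive ergodic theorem and the strong law of large numbers deliver $\tfrac{1}{n}\log\|A_{\iiv_n}\| \to \chi_1$ and $\tfrac{1}{n}\log\lambda_{\iiv_n} \to \sum_i\lambda_i\log\lambda_i$ $\mu$-a.s. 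Implicit differentiation of \eqref{eq:pressuredef1} at $t=0$ (where $P(0)=-1$ since $\sum_{\iiv}\lambda_{\iiv}=1$) gives $P'(0) = \chi_1/\sum_i\lambda_i\log\lambda_i$, whence $\alpha_r(x)=P'(0)$ for Lebesgue-a.e. $x$.

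\textbf{Necessity ($\Rightarrow$).} Arguing by contraposition, suppose Assumption A fails. Theorem~\ref{thm:nondeg} still yields $\liminf_{y\to x}\frac{\log\|v(x)-v(y)\|}{\log|x-y|} = \widehat\alpha$ Lebesgue-a.e., so the existence of $\alpha_r$ a.e. reduces to showing that the corresponding $\limsup$ also equals $\widehat\alpha$ a.e. I would construct, for a positive-measure set of $x$, companion sequences $y_k\to x$ sharing long coding prefixes $\iiv_{n_k}$ with $x$ whose renormalised chord $w_k = v(g_{\iiv_{n_k}}^{-1}(x))-v(g_{\iiv_{n_k}}^{-1}(y_k))$ has projective direction within distance $O(\tau^{n_k})$ of the second input singular direction of $A_{\iiv_{n_k}}$. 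Along such sequences the dominated splitting estimate $\alpha_2(A_{\iiv_{n_k}}) \leq C\tau^{n_k}\|A_{\iiv_{n_k}}\|$ gives $\|A_{\iiv_{n_k}} w_k\| \ll \|A_{\iiv_{n_k}}\|\|w_k\|$, forcing the H\"older ratio strictly above $\widehat\alpha$. Their existence rests on the observation that the compact set $\Lambda = \overline{\{\langle v(s)-v(t)\rangle:s\neq t\}} \subset \mathbb{PR}^{d-1}$, invariant under the projective action of each $A_i$ (by \eqref{eq:linparam}), cannot be contained in any convex invariant cone strictly separated from the stable singular subspaces -- this is precisely what the failure of Assumption A means. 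A Borel--Cantelli argument on the symbol space (using independence between a fixed prefix $\iiv_n$ and the future symbols that select which chord direction within an open subset of $\Lambda$ is realised) then spreads the resulting bad times across a positive-measure set of typical codings.

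\textbf{Main obstacle.} The principal difficulty is the necessity direction: converting the qualitative failure of Assumption A into a quantitative frequency of bad times $n_k$ at which the chord direction can be steered near the contracting singular subspace of $A_{\iiv_{n_k}}$. This requires (i) a concrete characterisation of the failure of Assumption A in terms of the non-existence of a separating convex invariant cone, identifying twisting words along which the projective attractor $\Lambda$ straddles contracting directions, and (ii) a mixing-type estimate on the Bernoulli shift so that, conditional on a typical prefix, a sufficiently uniform proportion of futures produces a chord close to the contracting direction, strong enough to trigger Borel--Cantelli along a positive-measure set of $x$. The sufficiency direction is, in comparison, a routine ergodic-theoretic computation once the cone property has been propagated along the curve.
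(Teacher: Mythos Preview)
Your sufficiency argument is close to the paper's but has a genuine gap. The assertion ``$|x-y|\asymp\lambda_{\iiv_n}$'' when $\iiv_n$ is the longest common coding prefix is false: if $x=\pi(\ii)$ and $y=\pi(\jj)$ with $\ii\wedge\jj=n$, the renormalised points $g_{\iiv_n}^{-1}(x),g_{\iiv_n}^{-1}(y)$ lie in distinct first-level cylinders but may be arbitrarily close (e.g.\ $x=\pi(\iiv_n i\,\0)$, $y=\pi(\iiv_n(i-1)\,\1)$). The paper measures this boundary effect by the quantity $\ii\vee\jj$ and shows, via a direct summability argument, that $\ii\vee\jj/\ii\wedge\jj\to0$ for $\mu_0$-a.e.\ $\ii$ (Lemma~\ref{lem:erg}); only then does the ergodic-theorem computation you describe go through. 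The upper bound on the $\limsup$ likewise needs this control (Lemma~\ref{lem:ubassA}). Without it your two-sided estimate $\|v(x)-v(y)\|\asymp\|A_{\iiv_n}\|$ does not yield the limit.

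For necessity, the outline you give is the right strategy, and you correctly identify the obstacle, but the paper resolves it via an intermediate you are missing: a purely combinatorial ``well-ordered'' property of the finite vertex set $Z_0=\{z_0,\dots,z_N\}$ relative to the family of stable hyperplanes $F(\Sigma)$ (Definition~\ref{def:wellord}). The paper proves the three-way equivalence Assumption~A $\Leftrightarrow$ $\alpha_r$ exists a.e.\ $\Leftrightarrow$ $Z_0$ well-ordered. The point is that failure of well-orderedness is a \emph{finite} geometric datum: it produces a specific triple $z_{i-1},z_i,z_{i+1}$ and a specific $\mathbf{k}\in\Sigma$ with the segment $\mathrm{cv}(z_{i-1},z_i)$ and $\mathrm{cv}(z_i,z_{i+1})$ both meeting the hyperplane $F(\mathbf{k})$, hence by continuity a chord $\Pi(\ii')-\Pi(\jj')$ lying \emph{exactly} in $F(\mathbf{k}')$ for all $\ii',\mathbf{k}'$ in fixed cylinders. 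Recurrence is then just the ergodic theorem (visits of $\sigma^p\ii$ to the single cylinder $[k_m\cdots k_1 i_1\cdots i_n]$), not a Borel--Cantelli estimate, and along those return times Theorem~\ref{thm:BochiGour}\eqref{item:bg4},\eqref{item:bg6} give a uniform jump $\alpha_r(\pi(\ii))\geq\alpha(\pi(\ii))+\log\tau^{-1}/\chi_{\mu_0}>\\alpha(\pi(\ii))$. Your proposed route --- steering chords to within $O(\tau^{n_k})$ of the second singular direction of $A_{\ii|_{n_k}}$, which varies with $n_k$ --- would require the quantitative mixing you flag as problematic; the well-ordered reformulation replaces this moving target with a fixed one and makes the argument elementary.
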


\begin{remark}
    In the sequel, to keep the notation tractable we assume the signature $\ev = (0, \dots,0)$. The results
    carry over for general signatures, and the proofs can be easily modified for the general signature case, see Remark~\ref{rem:final}.
\end{remark}

The organization of the paper is as follows. In Section~\ref{sec:ds}
we prove several properties of the pressure function $P(t)$,
extending the works of \cite{FengLyap1-2003,FengLau_pressure} to the
dominated splitting of index-1 case using \cite{BochiGour}. We prove
Theorem \ref{thm:nondeg} in Section \ref{sec:nondegZippers}. Section
\ref{sec:AssAZippers} contains the proofs of Theorems \ref{thm:assA}
and \ref{thm:equiv} when the zipper satisfies Assumption A. Finally,
as an application in Section \ref{Sec:Ex_deRham}, we show that our
results can be applied to de Rham's curve, giving finer results than
existing ones in the literature.

\section{Pressure for matrices with dominated splitting of index-1}\label{sec:ds}

In this section, we generalize the result of Feng
\cite{FengLyap1-2003}, and Feng and Lau \cite{FengLau_pressure}. In
\cite{FengLau_pressure} the authors studied the pressure function
and multifractal properties of Lyapunov exponents for products of
positive matrices. Here, we extend their results for a more general
class of matrices by using Bochi and Gourmelon \cite{BochiGour} for
later usage.

Let $\Sigma$ be the set of one side infinite length words of symbols $\left\{0,\dots,N-1\right\}$, i.e. $\Sigma=\{0,\ldots,N-1\}^{\mathds{N}}$. Let $\sigma$ denote the left shift on $\Sigma$, its $n$-fold composition by $\sigma^n\ii=(i_{n+1},i_{n+2},\ldots)$. We use the standard notation $\ii|n$ for $i_1,\ldots,i_n$ and
$$
[\ii|_n]:=\left\{\jj\in\Sigma:j_1=i_1,\dots,j_n=i_n\right\}.
$$

Let us denote the set of finite length words by
$\Sigma^*=\bigcup_{n=0}^{\infty}\{0,\ldots,N-1\}^n$, and for an
$\iiv\in\Sigma^*$, let us denote the length of $\iiv$ by $|\iiv|$.
For a finite word $\iiv\in\Sigma^*$ and for a $\jj\in\Sigma$, denote
$\iiv\jj$ the concatenation of the finite word $\iiv$ with $\jj$.

Denote $\ii\wedge\jj$ the length of the longest common prefix of $\ii,\jj\in\Sigma$, i.e. $\ii\wedge\jj = \min\{n-1:\,i_n\neq j_n\}$. Let $\lv=(\lambda_0,\dots,\lambda_{N-1})$ be a probability vector and let $d(\ii,\jj)$ be the distance on $\Sigma$ with respect to $\lv$. Namely,
$$
d(\ii,\jj)=\prod_{n=1}^{\ii\wedge\jj}\lambda_{i_n}=:\lambda_{\ii|_{\ii\wedge\jj}}.
$$
If $\ii\wedge\jj=0$ then by definition $\ii|_{\ii\wedge\jj}=\emptyset$ and $\lambda_{\emptyset}=1$.
For every $r>0$, we define a partition $\Xi_r$ of $\Sigma$ by
\begin{equation}\label{eq:xi}
\Xi_r=\left\{[i_1,\dots,i_n]:\lambda_{i_1}\cdots\lambda_{i_n}\leq
r<\lambda_{i_1}\cdots\lambda_{i_{n-1}}\right\}.
\end{equation}

For a matrix $A$ and a subspace $\theta$, denote $\|A|\theta\|$ the
norm of $A$ restricted to $\theta$, i.e. $\|A|\theta\|=\sup_{v\in
\theta}\|Av\|/\|v\|$. In particular, if $\theta$ has dimension one
$\|A|\theta\|=\|Av\|/\|v\|$ for any $\underline{0}\neq v\in\theta$.
Denote $G(d,k)$ the Grassmanian manifold of $k$ dimensional
subspaces of $\mathbb{R}^d$. We define the angle between a $1$
dimensional subspace $E$ and a $d-1$ dimensional subspace $F$ as
usual, i.e.
$$
\sphericalangle(E,F)=\arccos\left(\frac{\langle v,\mathrm{proj}_Fv\rangle}{\|\mathrm{proj}_Fv\|\|v\|}\right),
$$
where $\underline{0}\neq v\in E$ arbitrary and $\mathrm{proj}_F$ denotes the orthogonal projection onto $F$.

The following theorem collects the most relevant properties of a family of matrices with dominated splitting of index-1.

\begin{theorem}\cite[Theorem~A, Theorem~B, Claim on p. 228]{BochiGour}\label{thm:BochiGour}
    Suppose that a finite set of matrices $\{A_0,\dots,A_{N-1}\}$ satisfies the dominated splitting of index-1 with multicone $M$. Then there exist H\"older continuous functions $E:\Sigma\mapsto\mathbb{PR}^{d-1}$ and $F:\Sigma\mapsto G(d,d-1)$ such that
    \begin{enumerate}
        \item $E(\ii)=A_{i_1}E(\sigma\ii)$ for every $\ii\in\Sigma$,
        \item $F(\ii)=A_{i_1}^{-1}F(\sigma\ii)$ for every $\ii\in\Sigma$,\label{item:BochiGour2}
        \item there exists $\beta>0$ such that $\sphericalangle(E(\ii),F(\jj))>\beta$ for every $\ii,\jj\in\Sigma$,
        \item\label{item:bg4} there exist constants $C\geq1$ and $0<\tau<1$ such that
        $$
        \frac{\alpha_2(A_{\ii|_n})}{\|A_{\ii|_n}\|}\leq C\tau^n
        $$
        for every $\ii\in\Sigma$ and $n\geq1$,
        \item\label{} there exists a constant $C>0$ such that $\|A_{\ii|_n}|E(\sigma^n\ii)\|\geq C\|A_{\ii|_n}\|$ for every $\ii\in\Sigma$,\label{item:bochigour5}
        \item there exists a constant $C>0$ such that $\|A_{\ii|_n}|F(i_n\dots i_1\jj)\|\leq C\alpha_2(A_{\ii|_n})$ for every $\ii,\jj\in\Sigma$.\label{item:bg6}
    \end{enumerate}
\end{theorem}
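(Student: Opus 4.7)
The proof I propose follows the standard Bochi--Gourmelon strategy: construct $E(\ii)$ as a nested intersection of contracting images of the multicone, define $F(\ii)$ dually, and then read off properties (3)--(6) from the linear-algebraic consequences of the splitting.

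For the forward direction, since each $A_i$ sends $\overline{M}$ into $M^o$, the compact sets $A_{\ii|_n}\overline{M}$ are nested and decreasing in $n$. I would first show that their projective diameter shrinks exponentially, uniformly in $\ii$. For a convex cone this is Birkhoff's theorem for the Hilbert metric; here the multicone need not be convex and may have several components, so I would work in an affine chart provided by the transverse hyperplane promised in Definition~\ref{def:domsplit}. In that chart every $A_i$ acts as a (fractional-linear) map, and since $\bigcup_i A_i\overline{M}$ is a compact subset of the open set $M^o$, a uniform contraction rate $0<\tau<1$ for the family follows by compactness. Set $E(\ii)=\bigcap_n A_{\ii|_n}\overline{M}$; this is a single projective point and item~(1) is immediate. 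H\"older continuity of $E$ comes from the fact that $\ii\wedge\jj\geq n$ implies $E(\ii),E(\jj)\in A_{\ii|_n}\overline{M}$, so their distance in any fixed metric on $\mathbb{PR}^{d-1}$ is at most $C\tau^n$, while $d(\ii,\jj)=\lambda_{\ii|_n}$ is geometric at a rate controlled by $\min_i\lambda_i$ and $\max_i\lambda_i$.

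For $F$ I would apply the same construction to the dual action. Dominated splitting of index-1 for $\{A_i\}$ is equivalent to dominated splitting of index-1 for the transposed-inverse tuple $\{A_i^{-T}\}$ on the dual projective space, with dual multicone consisting of the hyperplanes transverse to $\overline{M}$ (nonempty by the transversality hypothesis in Definition~\ref{def:domsplit}). Applying the first step to this dual system produces a H\"older map $\ii\mapsto E^*(\ii)$ into the dual projective space, and one defines $F(\ii)$ as the annihilator of $E^*$ re-indexed so as to force the equivariance $F(\ii)=A_{i_1}^{-1}F(\sigma\ii)$ of item~(2). The transversality assumption together with compactness of $\Sigma$ and continuity of $E,F$ yields a uniform strictly positive lower bound $\beta$ on $\sphericalangle(E(\ii),F(\jj))$, which is item~(3). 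Items (4)--(6) then reduce to elementary linear algebra in the splitting $\R^d=E(\sigma^n\ii)\oplus F(i_n\dots i_1\jj)$, which is uniformly non-degenerate by the angle bound: $A_{\ii|_n}$ maps $E(\sigma^n\ii)$ onto $E(\ii)\subset\overline{M}$ with norm comparable to $\|A_{\ii|_n}\|$ (the image direction lies in a precompact subset of the unit sphere), yielding (5); the restriction to $F(i_n\dots i_1\jj)$ has operator norm at most a constant times $\alpha_2(A_{\ii|_n})$ because this hyperplane stays within a bounded angle of the orthogonal complement of the top right singular direction of $A_{\ii|_n}$, giving (6); and (4) follows by combining (5) and (6), or directly from the exponential contraction of the multicone.

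The main technical obstacle is the uniform exponential contraction in the first step. Birkhoff's Hilbert-metric argument is textbook for strictly positive matrices on a single convex projective cone, but requires a nontrivial adaptation when the multicone has several disjoint components and is not projectively convex. The transverse-hyperplane chart provides the cleanest work-around, but one has to be careful that different $A_i$ may permute components of $M$, so the relevant object is the whole semigroup generated by $\{A_0,\dots,A_{N-1}\}$, and a uniform contraction constant must be extracted by a compactness argument after at most finitely many iterations. Once this uniform contraction is in hand, everything else in the theorem is bookkeeping.
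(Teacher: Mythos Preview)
The paper does not prove this theorem: it is quoted verbatim from Bochi and Gourmelon~\cite{BochiGour} (Theorem~A, Theorem~B, and the Claim on p.~228 of that reference), and no argument is supplied here. So there is no ``paper's own proof'' to compare against.

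That said, your sketch is essentially the strategy of the original Bochi--Gourmelon proof: build $E$ as the nested intersection $\bigcap_n A_{\ii|_n}\overline{M}$, obtain $F$ by dualising (applying the same construction to $\{A_i^{-T}\}$ acting on hyperplanes), derive the uniform angle bound~(3) from compactness, and then read off~(4)--(6) from linear algebra in the uniformly transverse splitting $\R^d=E\oplus F$. Your identification of the main technical point --- extracting a uniform exponential contraction rate on a multicone that may have several non-convex components, where the Hilbert-metric argument is not directly available --- is accurate, and working in an affine chart given by the transverse hyperplane is exactly the device used in~\cite{BochiGour}. The sketch is correct at the level of strategy; filling in the details (in particular the uniform contraction over the semigroup when components are permuted) is precisely what the cited paper does.
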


In particular, if $M$ is the multicone from
Definition~\ref{def:domsplit}, then
$$
E(\ii)=\bigcap_{n=1}^{\infty}A_{i_1}\cdots A_{i_n}(M),
$$
and for every $V\in M$, $A_{i_1}\cdots A_{i_n}V\to E(\ii)$ uniformly
(independently of $V$). Hence, there exists a constant $C>0$ such
that for every $V\in M$ and every $\iiv\in\Sigma^*$,
\begin{equation}\label{eq:compare}
\|A_{\iiv}|V\|\geq C'\|A_{\iiv}\|.
\end{equation}
So, this gives us a strong control over the growth rate of matrix products on subspaces in $M$.

Another consequence of Theorem~\ref{thm:BochiGour} is that the
function $\psi(\ii)=\log\|A_{i_1}|E(\sigma\ii)\|$ is
H\"older-continuous. That is, there exist $C>0$ and $0<\tau<1$ such
that
\begin{equation}\label{eq:hc}
|\psi(\ii)-\psi(\jj)|\leq C\tau^{\ii\wedge\jj}.
\end{equation}
Moreover, by the property $E(\ii)=A_{i_1}E(\sigma\ii)$, we have
\begin{equation}\label{eq:multip}
\|A_{\ii|_n}|E(\sigma^n\ii)\|=\prod_{k=1}^n\|A_{i_k}|E(\sigma^k\ii)\|.
\end{equation}
Indeed, since $E(\ii)$ is a one dimensional subspace, for every $v\in E(\sigma^n\ii)$
$$
\|A_{\ii|_n}|E(\sigma^n\ii)\|=\frac{\|A_{\ii|_n}v\|}{\|v\|}=\prod_{k=1}^n\frac{\|A_{i_k\ldots i_n}v\|}{\|A_{i_{k+1},\ldots,i_n}v\|}=\prod_{k=1}^n\|A_{i_k}|E(\sigma^k\ii)\|.
$$

\begin{remark}
    We note if the multicone $M$ in Definition~\ref{def:domsplit} has only one connected component then it can be chosen to be simply connected and convex. Indeed, since $\overline{M}$ is separated away from the strong stable subspaces $F$ then $\mathrm{cv}(\overline{M})$ must be separated away from every $d-1$ dimensional strong stable subspace, as well, where $\mathrm{cv}(\overline{M})$ denotes the convex hull of $\overline{M}$. Thus $A_i(\mathrm{cv}(\overline{M}))\subset\mathrm{cv}(\overline{M})^o$ for every $i$.
\end{remark}
For every $t$, let $\varphi_t:\Sigma\mapsto\R$ be the potential
function defined by
\begin{equation}\label{eq:pot}
\varphi_t(\ii)=\log\left(\|A_{i_1}|E(\sigma\ii)\|^t\lambda_{i_1}^{-P(t)}\right),
\end{equation}
where $P(t)$ was defined in \eqref{eq:pressuredef1}.

Using Theorem~\ref{thm:BochiGour}, one can show that for every $t$,
$\varphi_t$ is a H\"older continuous function. Thus, by
\cite[Theorem~1.4]{Bowenbook}, for every $t\in\R$ there exists a
unique $\sigma$-invariant, ergodic probability measure $\mu_t$ on
$\Sigma$ such that there exists a constant $C(t)>1$ such that for
every $\ii\in\Sigma$ and every $n\geq1$
\begin{equation}\label{eq:Gibbs}
C(t)^{-1}\leq\frac{\mu_t([\ii|_n])}{\prod_{k=0}^{n-1}e^{\varphi_t(\sigma^k\ii)}}\leq C(t).
\end{equation}
Observe that
$$
\prod_{k=0}^{n-1}e^{\varphi_t(\sigma^k\ii)}=\|A_{\ii|_n}|E(\sigma^n\ii)\|^t\cdot\lambda_{\ii|_n}^{-P(t)}.
$$

Moreover,
\begin{equation}\label{eq:dimhmut}
\dim_H\mu_t=\frac{h_{\mu_t}}{\chi_{\mu_t}},
\end{equation}
where
\begin{eqnarray}
h_{\mu_t} & = & \lim_{n\to\infty}\frac{-1}{n}\sum_{|\iiv|=n}\mu_t([\iiv])\log\mu_t([\iiv])=-\int\varphi_t(\ii)d\mu_t(\ii),\label{eq:Gibbsent}\\
\chi_{\mu_t} & = &
\lim_{n\to\infty}\frac{-1}{n}\sum_{|\iiv|=n}\mu_t([\iiv])\log\lambda_{\iiv}=-\int\log\lambda_{i_1}d\mu_t(\ii).\label{eq:GibbsLyap}
\end{eqnarray}

We call $\chi_{\mu_t}$ the Lyapunov exponent of $\mu_t$ and
$h_{\mu_t}$ the entropy of $\mu_t$.

\begin{lemma}\label{lem:pressure}
    The map $t\mapsto P(t)$ is continuous, concave, monotone increasing on $\R$.
\end{lemma}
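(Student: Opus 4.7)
The plan is to encode $P(t)$ via the two-variable auxiliary pressure
\begin{equation*}
\Psi(t,s):=\lim_{n\to\infty}\frac{1}{n}\log S_n(t,s),\qquad S_n(t,s):=\sum_{|\iiv|=n}\|A_\iiv\|^t\lambda_\iiv^{-s},
\end{equation*}
and then to read off every property of $P$ from that of $\Psi$ using the characterisation of $P(t)$ as the unique $s$ with $\Psi(t,s)=0$.

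First I would show that the limit defining $\Psi(t,s)$ exists for every $(t,s)\in\R^2$. For $t\ge 0$, submultiplicativity of the operator norm together with the multiplicativity of $\lambda_\iiv$ gives the clean bound $S_{n+m}(t,s)\le S_n(t,s)\,S_m(t,s)$, and Fekete's lemma produces the limit. For $t<0$ submultiplicativity points the wrong way, and here the dominated-splitting hypothesis is essential: by Theorem~\ref{thm:BochiGour}(\ref{item:bochigour5}) together with the cone estimate \eqref{eq:compare}, there is $c_0\in(0,1]$ with $\|A_\iiv|V\|\ge c_0\|A_\iiv\|$ for every finite word $\iiv$ and every $V\in M$. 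Applying this twice (first on $A_\jjv V\in M$, using $A_\jjv M\subset M$, then on $V$ itself) yields the reverse submultiplicativity $\|A_{\iiv\jjv}\|\ge c_0^2\|A_\iiv\|\|A_\jjv\|$. For $t<0$ this translates into $\log S_{n+m}(t,s)\le \log S_n(t,s)+\log S_m(t,s)+2t\log c_0$ with $2t\log c_0\ge 0$, so the Fekete variant for sequences subadditive up to an additive constant again produces $\Psi(t,s)$. This is, I expect, the only non-formal step; after it, the remaining properties are standard consequences of H\"older's inequality.

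With $\Psi$ in hand, H\"older applied directly to $S_n$ gives, for $\alpha\in(0,1)$ and any $(t_i,s_i)$,
\begin{equation*}
S_n\bigl(\alpha t_1+(1-\alpha)t_2,\,\alpha s_1+(1-\alpha)s_2\bigr)\le S_n(t_1,s_1)^{\alpha}\,S_n(t_2,s_2)^{1-\alpha},
\end{equation*}
so $(t,s)\mapsto\Psi(t,s)$ is jointly convex and hence continuous on $\R^2$. Because the $f_i$ are strict contractions we have $\|A_i\|<1$ and $\lambda_i<1$, so $t\mapsto\Psi(t,s)$ is strictly decreasing and $s\mapsto\Psi(t,s)$ is strictly increasing with slope bounded below by $\min_i\log(1/\lambda_i)>0$. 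Consequently $\Psi(t,s)=0$ has a unique solution $s=P(t)$, and the combination "strictly decreasing in $t$, strictly increasing in $s$" forces $P$ to be monotone increasing. For concavity I would specialise the H\"older bound above at $s_i=P(t_i)$: taking $\frac{1}{n}\log$ and $n\to\infty$ yields $\Psi(\alpha t_1+(1-\alpha)t_2,\,\alpha P(t_1)+(1-\alpha)P(t_2))\le 0$, and strict monotonicity of $\Psi$ in $s$ then gives $\alpha P(t_1)+(1-\alpha)P(t_2)\le P(\alpha t_1+(1-\alpha)t_2)$. Finally, continuity of $P$ on all of $\R$ is automatic for any concave function on an open interval, completing the proof.
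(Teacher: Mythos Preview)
Your argument is correct and essentially self-contained, but it follows a different route from the paper. The paper first sets up the Gibbs measure $\mu_t$ for the H\"older potential $\varphi_t$ (via Bowen's theorem), and then uses the Gibbs property~\eqref{eq:Gibbs} together with the Moran partition $\Xi_r$ of~\eqref{eq:xi} to obtain the one-variable representation
\[
P(t)=\lim_{r\to0+}\frac{\log\sum_{\iiv\in\Xi_r}\|A_{\iiv}\|^t}{\log r}.
\]
In this form the claimed properties are immediate: H\"older makes $t\mapsto\log\sum_{\iiv\in\Xi_r}\|A_{\iiv}\|^t$ convex and $\|A_\iiv\|<1$ makes it decreasing, and division by $\log r<0$ flips both signs. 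Your approach avoids thermodynamic formalism entirely, instead defining the two-variable pressure $\Psi(t,s)$, establishing its existence from (almost-)submultiplicativity via Fekete (using the quasi-multiplicativity $\|A_{\iiv\jjv}\|\ge c_0^2\|A_\iiv\|\|A_\jjv\|$ from dominated splitting for $t<0$), and then reading off concavity and monotonicity of the implicit level set $\{\Psi=0\}$ from the joint convexity and the separate monotonicities of $\Psi$. The paper's trick of passing to $\Xi_r$ is slicker because it eliminates the $s$-variable altogether, but it presupposes the Gibbs machinery; your argument is longer but more elementary and has the added benefit of directly justifying that the defining equation~\eqref{eq:pressuredef1} has a unique root.
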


\begin{proof}
    Since $\mu_t$ is a probability measure on $\Sigma$ and $\Xi_r$ is a partition we get
    $$
    0=\frac{\log\sum_{\iiv\in\Xi_r}\mu_t([\iiv])}{\log r}\text{ for every $r>0$}
    $$
    and by \eqref{eq:Gibbs} and \eqref{eq:pressuredef1}
    \begin{equation}\label{eq:press2}
    P(t)=\lim_{r\to0+}\frac{\log\sum_{\iiv\in\Xi_r}\|A_{\iiv}\|^t}{\log r}.
    \end{equation}
    Using this form it can be easily seen that $t\mapsto P(t)$ is continuous, concave and monotone increasing.
\end{proof}

By Lemma~\ref{lem:pressure}, the potential $\varphi_t$ depends
continuously on $t$. Moreover, by \eqref{eq:hc},
$|\varphi(\ii)-\varphi(\jj)|\leq Ct\tau^{\ii\wedge\jj}$. Thus, the
Perron-Frobenius operator
 $$
 (T_t(g))(\ii)=\sum_{i=0}^{N-1}e^{\varphi_t(i\ii)}g(i\ii)
 $$
 depends continuously on $t$. Hence, the unique eigenfunction $h_t$ of $T_t$ and the eigenmeasure of $\nu_t$ of the dual operator $T_t^*$ depends continuously on $t$. Since $d\mu_t=h_td\nu_t$, see \cite[Theorem~1.16]{Bowenbook}, we got that $t\mapsto\mu_t$ is continuous in weak*-topology.
 Hence, by \eqref{eq:Gibbsent} and \eqref{eq:GibbsLyap}, $t\mapsto h_{\mu_t}$ and $t\mapsto\chi_{\mu_t}$ are continuous on $\R$.

\begin{prop}\label{prop:pressure}
    The map $t\mapsto P(t)$ is continuously differentiable on $\R$. Moreover, for every $t\in\R$
    \[
    \dim_H\mu_t=tP'(t)-P(t),
    \]
    and
    \[
    \lim_{n\rightarrow\infty}\frac{\log\|A_{i_1}\cdots A_{i_n}\|}{\log\lambda_{i_1}\cdots\lambda_{i_n}}=P'(t)\text{ for $\mu_t$-almost every }\ii\in\Sigma.
    \]
\end{prop}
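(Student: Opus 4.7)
The plan is to realise $P(t)$ as the implicit zero-set of a two-parameter topological pressure and to invoke the classical differentiability theory for H\"older potentials, coupled with the implicit function theorem. By Theorem~\ref{thm:BochiGour}(\ref{item:bochigour5}) combined with the trivial inequality $\|A_{\iiv}|E\|\leq\|A_{\iiv}\|$, the norm $\|A_{\ii|_n}\|$ agrees with $\|A_{\ii|_n}|E(\sigma^n\ii)\|$ up to a uniform multiplicative constant, while \eqref{eq:multip} rewrites this latter quantity as $\exp S_n\psi_A(\ii)$, where $\psi_A(\ii):=\log\|A_{i_1}|E(\sigma\ii)\|$ is H\"older by \eqref{eq:hc}. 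Setting $\psi_{t,s}(\ii):=t\psi_A(\ii)-s\log\lambda_{i_1}$ and
\[
Q(t,s):=\lim_{n\to\infty}\frac{1}{n}\log\sum_{|\iiv|=n}\exp\Bigl(\sup_{\ii\in[\iiv]}S_n\psi_{t,s}(\ii)\Bigr),
\]
the defining equation \eqref{eq:pressuredef1} translates into $Q(t,P(t))=0$ for every $t\in\R$.

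Since $(t,s)\mapsto\psi_{t,s}$ is an affine map from $\R^2$ into a fixed space of H\"older potentials, the standard thermodynamic formalism (spectral theory of the transfer operator $T_{t,s}$ discussed immediately before Proposition~\ref{prop:pressure}, cf.~\cite{Bowenbook}) yields that $Q$ is real-analytic on $\R^2$, with
\[
\partial_tQ(t,s)=\int\psi_A\,d\mu_{t,s},\qquad \partial_sQ(t,s)=-\int\log\lambda_{i_1}\,d\mu_{t,s},
\]
where $\mu_{t,s}$ denotes the unique Gibbs state of $\psi_{t,s}$. At $s=P(t)$ one has $\mu_{t,P(t)}=\mu_t$ and $\partial_sQ(t,P(t))=\chi_{\mu_t}>0$, so the implicit function theorem gives $P\in C^1(\R)$ with $P'(t)=-\Lambda_t/\chi_{\mu_t}$, where $\Lambda_t:=\int\psi_A\,d\mu_t$.

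The dimension formula follows from the variational principle applied to the equilibrium state $\mu_t$ of $\varphi_t=\psi_{t,P(t)}$: since $Q(t,P(t))=0$, identity \eqref{eq:Gibbsent} yields $h_{\mu_t}=-\int\varphi_t\,d\mu_t=-t\Lambda_t-P(t)\chi_{\mu_t}$, whence \eqref{eq:dimhmut} gives $\dim_H\mu_t=h_{\mu_t}/\chi_{\mu_t}=tP'(t)-P(t)$. For the almost-sure limit, Birkhoff's ergodic theorem applied to the ergodic measure $\mu_t$ with observables $\psi_A$ and $-\log\lambda_{i_1}$ gives $\tfrac{1}{n}S_n\psi_A(\ii)\to\Lambda_t$ and $\tfrac{1}{n}\log\lambda_{\ii|_n}\to -\chi_{\mu_t}$ for $\mu_t$-a.e.\ $\ii$; by Theorem~\ref{thm:BochiGour}(\ref{item:bochigour5}) the first limit is unchanged upon replacing $\|A_{\ii|_n}|E(\sigma^n\ii)\|$ by $\|A_{\ii|_n}\|$, and taking the ratio produces the claimed value $-\Lambda_t/\chi_{\mu_t}=P'(t)$.

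The main obstacle is the passage from concavity of $P$ (which by Lemma~\ref{lem:pressure} and standard convex analysis only provides differentiability off a countable set) to global $C^1$-regularity, together with a simultaneous extraction of the explicit derivative formulas. One route is to invoke real-analyticity of the leading eigenvalue of $T_{t,s}$ under H\"older perturbations; an alternative is to exploit uniqueness of the Gibbs state $\mu_t$ (guaranteed by \eqref{eq:Gibbs}) to show that left- and right-derivatives of $s\mapsto Q(t,s)$ coincide at $s=P(t)$, ruling out corners in the level curve $\{Q=0\}$. Once this step is in place, the remaining parts of the argument are routine Birkhoff and variational-principle manipulations.
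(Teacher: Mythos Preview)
Your argument is correct and follows a genuinely different route from the paper. You embed $P$ as the zero level set of a two-variable pressure $Q(t,s)$ for H\"older potentials, invoke analytic perturbation theory for the transfer operator to get $Q$ real-analytic with explicit partial derivatives, and then apply the implicit function theorem (using $\partial_sQ=\chi_{\mu_t}>0$) to conclude $P\in C^1$ with $P'(t)=-\Lambda_t/\chi_{\mu_t}$. The dimension formula and the $\mu_t$-a.e.\ Lyapunov ratio then drop out of the variational principle and Birkhoff, exactly as you write.

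The paper instead exploits a multifractal identity: from the Gibbs bound \eqref{eq:Gibbs} and \eqref{eq:press2} it reads off $\tau_{\mu_t}(q)=P(tq)-P(t)q$, and then appeals to an external result (differentiability of the $L^q$-spectrum of a Gibbs measure at $q=1$, with $\tau_{\mu_t}'(1)=\dim_H\mu_t$) to obtain differentiability of $P$ at every $t\neq0$ together with $\dim_H\mu_t=tP'(t)-P(t)$ in one stroke. The formula $P'(t)=-\Lambda_t/\chi_{\mu_t}$ is then deduced from the entropy/Lyapunov expression \eqref{eq:dimhmut}, and $C^1$-regularity (including the point $t=0$) is obtained a posteriori from weak$^*$ continuity of $t\mapsto\mu_t$. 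Your approach is more self-contained and in fact yields real-analyticity of $P$, not merely $C^1$; the paper's approach trades that extra strength for a direct link between $P$ and the $L^q$-spectrum, which is thematically natural in a multifractal paper and avoids citing the analytic perturbation machinery.
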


\begin{proof}
    We recall \cite[Theorem~2.1]{H}. That is, since $\mu_t$ is a Gibbs measure
    $$
    \tau_{\mu_t}(q)=\lim_{r\to0+}\frac{\log\sum_{\iiv\in\Xi_r}\mu_t([\iiv])^q}{\log r}
    $$
    is differentiable at $q=1$ and $\tau_{\mu_t}'(1)=\dim_H\mu_t$. On the other hand, by \eqref{eq:Gibbs} and \eqref{eq:press2}
    $$
    \tau_{\mu_t}(q)=P(tq)-P(t)q.
    $$
    Hence, by taking the derivative at $q=1$ we get that $P(t)$ is differentiable for every $t\in\R/\left\{0\right\}$ and
    $$
    \dim_H\mu_t=tP'(t)-P(t).
    $$
    Let us observe that
    by \eqref{eq:pot}, \eqref{eq:dimhmut} and \eqref{eq:Gibbsent}
    $$
    \dim_H\mu_t=t\dfrac{-\int\log\|A_{i_1}|E(\sigma\ii)\|d\mu_t(\ii)}{-\int\log\lambda_{i_1}d\mu_t(\ii)}-P(t).
    $$
    Thus,
    $$
    P'(t)=\dfrac{-\int\log\|A_{i_1}|E(\sigma\ii)\|d\mu_t(\ii)}{-\int\log\lambda_{i_1}d\mu_t(\ii)}\text{ for every }t\neq0.
    $$
    Since $t\mapsto\mu_t$ is continuous in weak*-topology we get that $t\mapsto P'(t)$ is continuous on $\R/\left\{0\right\}$. On the other hand, the left and right hand side limits of $P'(t)$ at $t=0$ exist and are equal. Thus, $t\mapsto P(t)$ is continuously differentiable on $\R$.

    By Theorem~\ref{thm:BochiGour} and ergodicity of $\mu_t$ we get the last assertion of the proposition.
\end{proof}

Let us observe that by the definition of pressure function
\eqref{eq:pressuredef1}, $P(0)=-1$ and thus, $\mu_0$ corresponds to
the Bernoulli measure on $\Sigma$ with probabilities
$(\lambda_0,\ldots,\lambda_{N-1})$. That is,
    $$
    \mu_0([i_1,\ldots,i_n])=\lambda_{i_1}\cdots\lambda_{i_n}.
    $$

\begin{lemma}\label{lem:boundsonder}
    For every finite set of matrices $\mathcal{A}$ with dominated splitting of index-1,  $P'(0)\geq1/d_0$, $P'(d_0)\leq1/d_0$.
    Moreover, $P'(0)>1/d_0$ if and only if $P'(d_0)<1/d_0$ if and only if $\mu_{d_0}\neq\mu_0$.
\end{lemma}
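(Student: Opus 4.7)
The plan rests on two specific values of $P$ that are immediate from the definitions: $P(0)=-1$ (as noted right after Proposition~\ref{prop:pressure}, since $\sum_{|\iiv|=n}\lambda_\iiv=1$) and $P(d_0)=0$ (comparing \eqref{eq:d0} with \eqref{eq:pressuredef1}). Because $P$ is concave by Lemma~\ref{lem:pressure} and $C^1$ by Proposition~\ref{prop:pressure}, the derivative $P'$ is continuous and non-increasing. Writing the tangent-line bounds at the two endpoints of $[0,d_0]$,
\[
P(d_0)\le P(0)+d_0 P'(0)\quad\text{and}\quad P(0)\le P(d_0)-d_0 P'(d_0),
\]
and substituting the values of $P(0)$ and $P(d_0)$ yields the two inequalities $P'(0)\ge 1/d_0$ and $P'(d_0)\le 1/d_0$.

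For the first equivalence $P'(0)>1/d_0\iff P'(d_0)<1/d_0$, I would invoke the identity $\int_0^{d_0}P'(s)\,ds=P(d_0)-P(0)=1=d_0\cdot(1/d_0)$. If $P'(0)=1/d_0$, then monotonicity forces $P'(s)\le 1/d_0$ on $[0,d_0]$, so the equality of integrals pins down $P'\equiv 1/d_0$ throughout $[0,d_0]$, and in particular $P'(d_0)=1/d_0$; the reverse implication is symmetric.

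It remains to relate these equalities with $\mu_{d_0}=\mu_0$. The forward direction is essentially algebraic: by Proposition~\ref{prop:pressure} one has $\dim_H\mu_{d_0}=d_0 P'(d_0)-P(d_0)=d_0 P'(d_0)$ and $\dim_H\mu_0=0\cdot P'(0)-P(0)=1$, so $\mu_{d_0}=\mu_0$ forces $P'(d_0)=1/d_0$. For the converse, assume $P'(0)=1/d_0$ and use the variational characterization: $\mu_{d_0}$ is the unique equilibrium state of the H\"older potential $\varphi_{d_0}=d_0\log\|A_{i_1}|E(\sigma\ii)\|$ (using $P(d_0)=0$), and the corresponding topological pressure is zero. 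The derivative formula from the proof of Proposition~\ref{prop:pressure}, extended by continuity to $t=0$ through the weak$^*$-continuity of $t\mapsto\mu_t$, gives
\[
\int\log\|A_{i_1}|E(\sigma\ii)\|\,d\mu_0=-P'(0)\,h_{\mu_0}=-h_{\mu_0}/d_0.
\]
Evaluating the variational functional at $\mu_0$ then produces $h_{\mu_0}+\int\varphi_{d_0}\,d\mu_0=h_{\mu_0}-h_{\mu_0}=0$, so $\mu_0$ also attains the supremum and uniqueness of the equilibrium state forces $\mu_0=\mu_{d_0}$.

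The main potential obstacle is the last step: justifying the extension of the $P'$ formula to $t=0$ and invoking uniqueness of the equilibrium state. Both rely on the H\"older regularity \eqref{eq:hc} of $\varphi_t$ and the continuity of $t\mapsto\mu_t$ in weak$^*$-topology already established above the lemma, so the argument goes through cleanly without introducing new machinery.
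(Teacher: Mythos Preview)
Your proof is correct and follows essentially the same route as the paper: both use $P(0)=-1$, $P(d_0)=0$, concavity, and $C^1$-regularity to obtain the inequalities and the first equivalence. For the second equivalence you and the paper both invoke uniqueness of equilibrium states, but in mirror-image directions: the paper observes that $P'(d_0)=1/d_0$ gives $h_{\mu_{d_0}}/\chi_{\mu_{d_0}}=1$ and then cites \cite[Theorem~1.22]{Bowenbook} to conclude $\mu_{d_0}$ is the equilibrium state for $\log\lambda_{i_1}$, i.e.\ $\mu_0$; you instead compute that $\mu_0$ attains the variational supremum for $\varphi_{d_0}$ and conclude $\mu_0=\mu_{d_0}$. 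One small remark: the identity you write as $\int\log\|A_{i_1}|E(\sigma\ii)\|\,d\mu_0=-P'(0)\,h_{\mu_0}$ comes from the derivative formula with $\chi_{\mu_0}$, not $h_{\mu_0}$, and you are silently using $h_{\mu_0}=\chi_{\mu_0}$ (equivalently $\dim_H\mu_0=1$); it would be cleaner to say this explicitly.
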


\begin{proof}
By the definition of $P(t)$, \eqref{eq:pressuredef1}, $P(d_0)= 0$,
where $d_0$ is defined in \eqref{eq:d0}. Together with $P(0)=-1$ and
the concavity and differentiability of $P(t)$ (by Lemma
\ref{lem:pressure} and Proposition~\ref{prop:pressure}), we get
$P'(0)\geq1/d_0$, $P'(d_0)\leq1/d_0$. Moreover, $P'(0)>1/d_0$ if and
only if $P'(d_0)<1/d_0$.

On the other hand, by Proposition~\ref{prop:pressure}
$$
\dim_H\mu_{d_0}=d_0P'(d_0)=\lim_{n\to\infty}\frac{\log\|A_{\ii|_n}\|^{d_0}}{\log\lambda_{\ii|_n}}=\frac{h_{\mu_{d_0}}}{\chi_{\mu_{d_0}}}\text{ for $\mu_{d_0}$-a.e. $\ii$,}
$$
where in the last equation we used the definition of $\mu_{d_0}$, the entropy and the Lyapunov exponent. Since $\dim_H\mu_0=1$, if $P'(d_0)<1/d_0$ then $\mu_0\neq\mu_{d_0}$. On the other hand, by \cite[Theorem~1.22]{Bowenbook}, for every $\sigma$-invariant, ergodic measure $\nu$ on $\Sigma$,
$$
\frac{h_{\nu}}{-\int\log\lambda_{i_0}d\nu(\ii)}\leq1\text{ and }\frac{h_{\nu}}{-\int\log\lambda_{i_0}d\nu(\ii)}=1\text{ if and only if }\nu=\mu_0.
$$
Therefore, if $P'(d_0)=1/d_0$ then $\frac{h_{\mu_{d_0}}}{\chi_{\mu_{d_0}}}=1$ and so $\mu_{d_0}=\mu_0$.
\end{proof}

\begin{lemma}\label{lem:upperbound}
    For every $\alpha\in[\alpha_{\min},\alpha_{\max}]$
    \begin{equation}\label{eq:ub1}
    \dim_H\Big\{\ii\in\Sigma:\liminf_{m\rightarrow\infty}\frac{\log \|A_{\ii|_m}\| }{\log\lambda_{\ii|_m}}\leq\alpha\Big\}\leq\inf_{t\geq 0}\left\{t\alpha-P(t)\right\}
    \end{equation}
    and
    \begin{equation}\label{eq:ub2}
    \dim_H\Big\{\ii\in\Sigma:\limsup_{m\rightarrow\infty}\frac{\log \|A_{\ii|_m}\| }{\log\lambda_{\ii|_m}}\geq\alpha\Big\}\leq\inf_{t\leq 0}\left\{t\alpha-P(t)\right\}
    \end{equation}
\end{lemma}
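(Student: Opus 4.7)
The plan is to prove both inequalities by using the Gibbs measure $\mu_t$ from \eqref{eq:Gibbs} as an auxiliary measure and bounding its lower local dimension on the respective level sets. Combining item~\eqref{item:bochigour5} of Theorem~\ref{thm:BochiGour} (which gives $\|A_{\ii|_n}|E(\sigma^n\ii)\|\asymp\|A_{\ii|_n}\|$ uniformly in $\ii$ and $n$) with the Gibbs estimate \eqref{eq:Gibbs} and the multiplicativity \eqref{eq:multip} yields, for every fixed $t\in\R$, a constant $C(t)>0$ with
\begin{equation*}
C(t)^{-1}\,\|A_{\ii|_n}\|^t\,\lambda_{\ii|_n}^{-P(t)}\leq \mu_t([\ii|_n])\leq C(t)\,\|A_{\ii|_n}\|^t\,\lambda_{\ii|_n}^{-P(t)}
\end{equation*}
for every $\ii\in\Sigma$ and $n\geq 1$. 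This Gibbs-type comparison is the only quantitative input needed.

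For \eqref{eq:ub1}, fix $t\geq 0$ and $\varepsilon>0$ and denote the set on the left-hand side by $X_\alpha$. If $\ii\in X_\alpha$, then $\log\|A_{\ii|_m}\|/\log\lambda_{\ii|_m}\leq \alpha+\varepsilon$ for infinitely many $m$, which, since $\log\lambda_{\ii|_m}<0$, is equivalent to $\|A_{\ii|_m}\|\geq \lambda_{\ii|_m}^{\alpha+\varepsilon}$. Raising to the power $t\geq 0$ and plugging into the Gibbs lower bound produces
\begin{equation*}
\mu_t([\ii|_m])\geq C(t)^{-1}\,\lambda_{\ii|_m}^{t(\alpha+\varepsilon)-P(t)}
\end{equation*}
for the same infinite collection of $m$. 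In the metric $d$ the cylinder $[\ii|_m]$ is the ball of radius $\lambda_{\ii|_m}$ around $\ii$, so along the subsequence $r_m=\lambda_{\ii|_m}\to 0$ one obtains the lower local-dimension bound $\underline{d}(\mu_t,\ii)\leq t(\alpha+\varepsilon)-P(t)$ for every $\ii\in X_\alpha$. The standard Rogers--Taylor / mass-distribution argument (see e.g.~\cite{Falconer}) then yields $\dim_H X_\alpha\leq t(\alpha+\varepsilon)-P(t)$; letting $\varepsilon\to 0$ and taking the infimum over $t\geq 0$ proves \eqref{eq:ub1}.

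The second inequality is symmetric. For $\ii$ in the set on the left-hand side of \eqref{eq:ub2} and any $\varepsilon>0$ we have $\|A_{\ii|_m}\|\leq \lambda_{\ii|_m}^{\alpha-\varepsilon}$ for infinitely many $m$. Raising to the power $t\leq 0$ reverses the direction, so the Gibbs bound again produces $\mu_t([\ii|_m])\geq C(t)^{-1}\lambda_{\ii|_m}^{t(\alpha-\varepsilon)-P(t)}$ along a subsequence. The identical local-dimension step gives $\dim_H\leq t(\alpha-\varepsilon)-P(t)$, and sending $\varepsilon\to 0$ followed by the infimum over $t\leq 0$ completes \eqref{eq:ub2}.

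The argument will contain no serious obstacle once the Gibbs comparison is in hand; the only delicate points to keep in mind are the sign book-keeping (the passage between $\log\|A\|/\log\lambda$ and powers of $\lambda$ must account for $\log\lambda<0$, and the sign of $t$ must be matched to which one-sided tail is being estimated), and the explicit invocation of the local-dimension lemma in the ultrametric space $(\Sigma,d)$, which is routine because metric balls coincide with cylinders.
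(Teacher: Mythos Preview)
Your argument is correct and follows the same underlying idea as the paper: both use the Gibbs comparison $\mu_t([\ii|_n])\asymp\|A_{\ii|_n}\|^t\lambda_{\ii|_n}^{-P(t)}$ to convert the $\liminf/\limsup$ condition into a mass estimate on cylinders, and then pass to a Hausdorff-dimension bound. The only difference is packaging: the paper builds explicit Moran-type covers $\underline{C}_r(\varepsilon),\overline{C}_r(\varepsilon)$ from the partitions $\Xi_\rho$ and bounds the $s$-dimensional Hausdorff premeasure directly, whereas you bound the lower local dimension of $\mu_t$ along a subsequence and invoke the Rogers--Taylor/Billingsley lemma. Since in the ultrametric $(\Sigma,d)$ closed balls of radius $\lambda_{\ii|_m}$ coincide with cylinders $[\ii|_m]$, your local-dimension step is exactly the cover argument the paper carries out by hand; your version is slightly cleaner in that the auxiliary $\varepsilon$ disappears in a single limit rather than through the $r^{\varepsilon}$ term.
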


\begin{proof}
    For simplicity, we use the notations $$
    \underline{G}_{\alpha}=\left\{\ii\in\Sigma:\liminf_{m\rightarrow\infty}\frac{\log \|A_{\ii|_m}\| }{\log\lambda_{\ii|_m}}\leq\alpha\right\}\text{ and }\overline{G}_{\alpha}=\left\{\ii\in\Sigma:\limsup_{m\rightarrow\infty}\frac{\log \|A_{\ii|_m}\| }{\log\lambda_{\ii|_m}}\geq\alpha\right\}.$$

    Let $\varepsilon>0$ be arbitrary but fixed and let us define the following sets of cylinders:
    \[
    \underline{D}_{r}(\varepsilon)=\Big\{[\iiv]\in\Xi_{\rho}:0<\rho\leq r\text{ and }\frac{\log \|A_{\iiv}\| }{\log\lambda_{\iiv}}\leq\alpha+\varepsilon\Big\}
    \]
    and
    \[
    \overline{D}_r(\varepsilon)=\Big\{[\iiv]\in\Xi_{\rho}:0<\rho\leq r\text{ and }\frac{\log \|A_{\iiv}\| }{\log\lambda_{\iiv}}\geq\alpha-\varepsilon\Big\}.
    \]
    By definition, $\underline{D}_r(\varepsilon)$ is a cover of $\underline{G}_{\alpha}$ and respectively, $\overline{D}_r(\varepsilon)$ is a cover of $\overline{G}_{\alpha}$. Now let $\underline{C}_r(\varepsilon)$ and $\overline{C}_r(\varepsilon)$ be a disjoint set of cylinders such that
    $$\bigcup_{[\iiv]\in\underline{D}_r(\varepsilon)}[\iiv] = \bigcup_{[\iiv]\in\underline{C}_r(\varepsilon)}[\iiv]\text{ and }\bigcup_{[\iiv]\in\overline{D}_r(\varepsilon)}[\iiv] = \bigcup_{[\iiv]\in\overline{C}_r(\varepsilon)}[\iiv].$$
    Then by \eqref{eq:Gibbs} and the definition of $\underline{C}_r(\varepsilon)$, for any $t\geq0$
    \begin{align*}
    \mathcal{H}^{\alpha t-P(t)+(1+t)\varepsilon}_{r}(\underline{G}_{\alpha}) &\leq \sum_{[\iiv]\in\underline{C}_r(\varepsilon)}\lambda_{\iiv}^{(\alpha t-P(t)+(1+t)\varepsilon)}\\
    &\leq \lambda_{\min}^{-1}r^{\varepsilon} \sum_{[\iiv]\in\underline{C}_r(\varepsilon)}\|A_{\iiv}\|^t\lambda_{\iiv}^{-P(t)}\\
    &\leq C\lambda_{\min}^{-1}r^{\varepsilon}\sum_{[\iiv]\in\underline{C}_r(\varepsilon)}\mu_t([\iiv])\leq C\lambda_{\min}^{-1}r^{\varepsilon}.
    \end{align*}
    Hence, $\mathcal{H}^{\alpha t-P(t)+(1+t)\varepsilon}(\underline{G}_{\alpha})=0$ for any $t>0$ and any $\varepsilon>0$, so \eqref{eq:ub1} follows. The proof of \eqref{eq:ub2} is similar by using the cover $\overline{C}_r(\varepsilon)$ of $\overline{G}_{\alpha}$.
\end{proof}

We note that by the concavity of $P$
$$
\inf_{t\in\R}\left\{t\alpha-P(t)\right\}=\inf_{t\leq 0}\left\{t\alpha-P(t)\right\},
$$
for every  and $\alpha\in[P'(0),\alpha_{\max}]$,
$$
\inf_{t\in\R}\left\{t\alpha-P(t)\right\}=\inf_{t\geq 0}\left\{t\alpha-P(t)\right\},
$$
for every $\alpha\in[\alpha_{\min},P'(0)]$.

\section{Pointwise H\"older exponent for non-degenerate curves}\label{sec:nondegZippers}

First, let us define the natural projections $\pi$ and $\Pi$ from
the symbolic space $\Sigma$ to the unit interval $[0,1]$ and the
curve $\Gamma$. We recall that we assumed that all the signatures of
the affine zipper Definition~\ref{def:zipper} is $0$, and all the
matrices are invertible. Therefore,
\begin{eqnarray}
\pi(\ii) & = & \lim_{n\to\infty}g_{i_1}\circ\cdots\circ g_{i_n}(0)=\sum_{n=1}^{\infty}\lambda_{\ii|_{n-1}}\gamma_{i_n}\\
\Pi(\ii) & = & \lim_{n\to\infty}f_{i_1}\circ\cdots\circ f_{i_n}(\underline{0})=\sum_{n=1}^{\infty}A_{\ii|_{n-1}}t_{i_n}.
\end{eqnarray}
Observe that by the definition of the linear parametrization $v$ of $\Gamma$, $v(\pi(\ii))=\Pi(\ii)$.

In the analysis of the pointwise H\"older exponent $\alpha$, defined in \eqref{eq:alpha1}, the points play important role which are far away symbolically but close on the self-affine curve. To be able to handle such points we introduce the following notation
\begin{equation*}\label{eq:defiveej}
\ii\vee\jj =
\left\{
\begin{array}{ll}
\min \{\sigma^{\ii\wedge\jj+1}\ii\wedge\1,\sigma^{\ii\wedge\jj+1}\jj\wedge\0\}, & \hbox{if $i_{\ii\wedge\jj+1}+1=j_{\ii\wedge\jj+1}$,} \\
\min \{\sigma^{\ii\wedge\jj+1}\ii\wedge\0,\sigma^{\ii\wedge\jj+1}\jj\wedge\1\}, & \hbox{if $j_{\ii\wedge\jj+1}+1=i_{\ii\wedge\jj+1}$,} \\
0, & \hbox{otherwise,}
\end{array}
\right.
\end{equation*}
where $\0$ denotes the $(0,0,\dots)$ and $\1$ denotes the
$(N-1,N-1,\dots)$ sequence. It is easy to see that there exists a
constant $K>0$ such that
    \begin{equation}\label{eq:bound}
    K^{-1}(\lambda_{\ii|_{\ii\wedge\jj+\ii\vee\jj}}+\lambda_{\jj|_{\ii\wedge\jj+\ii\vee\jj}})\leq|\pi(\ii)-\pi(\jj)|\leq K(\lambda_{\ii|_{\ii\wedge\jj+\ii\vee\jj}}+\lambda_{\jj|_{\ii\wedge\jj+\ii\vee\jj}}).
    \end{equation}
Hence, the distance on $[0,1]$ is not comparable with the distance on the symbolic space. More precisely, let $T$ be the set of points on the symbolic space, which has tail $0$ or $N-1$, i.e. $\ii\in T$ if and only if there exists a $k\geq0$ such that $\sigma^k\ii=\0$ or $\sigma^k\ii=\1$. So if $\pi(\sigma^k\ii)$ is too close to the set $\pi(T)$ infinitely often then we lose the symbolic control over the distance $|\pi(\ii)-\pi(\ii_n)|$, where $\ii_n$ is such that $\pi(\ii_n)\to\pi(\ii)$ as $n\to\infty$.

On the other hand, the symbolic control of the set $\|\Pi(\ii)-\Pi(\ii_n)\|$ is also far non-trivial. In general, $\|\Pi(\ii)-\Pi(\jj)\|=\|A_{\ii|_{\ii\wedge\jj}}(\Pi(\sigma^{\ii\wedge\jj}\ii)-\Pi(\sigma^{\ii\wedge\jj}\jj))\|$ is not comparable to $\|A_{\ii|_{\ii\wedge\jj}}\|\cdot\|\Pi(\sigma^{\ii\wedge\jj}\ii)-\Pi(\sigma^{\ii\wedge\jj}\jj)\|$, unless $\langle\Pi(\sigma^{\ii\wedge\jj}\ii)-\Pi(\sigma^{\ii\wedge\jj}\jj)\rangle\in M$, where $M$ is the multicone satisfying the Definition~\ref{def:domsplit}. Thus, in order to handle
$$
\liminf_{n\to\infty}\frac{\log\|\Pi(\ii)-\Pi(\ii_n)\|}{\log|\pi(\ii)-\pi(\ii_n)|}
$$
we need that $\ii$ is sufficiently far from the tail set $T$ and
also that the points $\Pi(\ii_n)$ on $\Gamma$ can be chosen such
that
$\langle\Pi(\sigma^{\ii\wedge\jj}\ii)-\Pi(\sigma^{\ii\wedge\ii_n}\ii_n)\rangle\in
M$. So we introduce a kind of exceptional set $B$, where both of
these requirements fail. We define $B\subseteq\Sigma$ such that
\begin{multline}\label{eq:setBad}
B=\left\{\ii\in\Sigma:\forall'\ n\geq1\ \forall'\ l\geq1\ \forall'\ m\geq1\ \exists'\ K\geq0\ \forall'\ k\geq K\right.\\
\left.\left(M(\Pi(\sigma^k\ii))\setminus B_{1/n}(\Pi(\sigma^k\ii))\right)\cap \Gamma\setminus(\Gamma_{\sigma^k\ii|_l}\cup\Gamma_{\sigma^k\ii|_{l-1}(i_{k+l}-1)(N-1)^m}\cup\Gamma_{\sigma^k\ii|_{l-1}(i_{k+l}+1)0^m})=\emptyset\right\},
\end{multline}
where $\Gamma_{\iiv}=f_{\iiv}(\Gamma)$ for any finite length word
$\iiv\in\Sigma^*$ and $M(\Pi(\ii))$ is the cone centered at
$\Pi(\ii)$. We note that if $i_{l}=0$ (or $i_{l}=N-1$) then we
define $\Gamma_{\sigma^k\ii|_{l-1}(i_{l}-1)(N-1)^m}=\emptyset$ (or
$\Gamma_{\sigma^k\ii|_{l-1}(i_{l}+1)0^m}=\emptyset$ respectively).

In particular, $B$ contains those points $\ii$, for which locally the curve $\Gamma$ will leave the cone $M$ very rapidly. In other words, let
\begin{multline*}
B_{n,l,m}=
\left\{\ii\in\Sigma:\right.\\ \left.\left(M(\Pi(\ii))\setminus B_{1/n}(\Pi(\ii))\right)\cap \Gamma\setminus(\Gamma_{\ii|_l}\cup\Gamma_{\ii|_{l-1}(i_{l}-1)(N-1)^m}\cup\Gamma_{\ii|_{l-1}(i_{l}+1)0^m})=\emptyset\right\}.
\end{multline*}
and
$$
B_{n,m,l,K}=\bigcap_{k=K}^{\infty}\sigma^{-k}B_{n,l,m}\text{ and }B=\bigcap_{n=1}^{\infty}\bigcap_{l=0}^{\infty}\bigcap_{m=0}^{\infty}\bigcup_{K=0}^{\infty}B_{n,l,m,K}.
$$

For a visualisation of the local neighbourhood of a point in
$B_{n,l,m}$, see Figure~\ref{fig:bnm}. In particular, we are able to
handle the pointwise H\"older exponents at $\pi(\ii)$ outside to the
set $B$ and we show that $B$ is small in some sense.

\begin{figure}
  \centering
  \includegraphics[width=80mm]{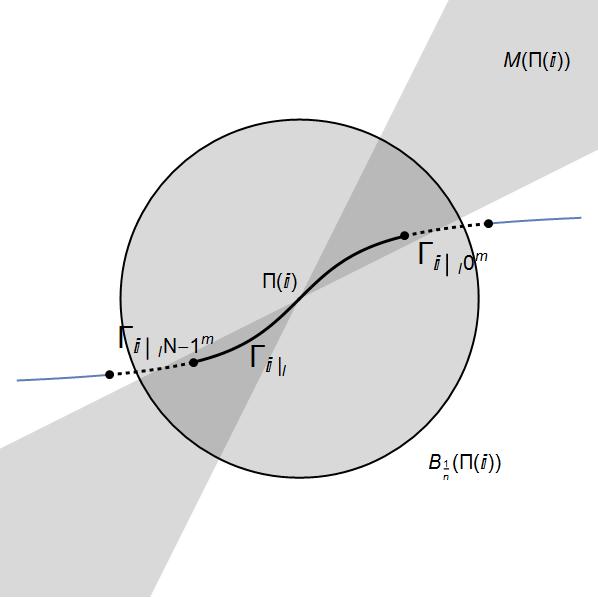}
  \caption{Local neighbourhood of points in $B_{n,l,m}$.}\label{fig:bnm}
\end{figure}

\begin{lemma}\label{lem:tobad}
    Let us assume that $\mathcal{S}$ is non-degenerate. Then there exist $n\geq1$, $l\geq1$, $m\geq1$ and $\jjv$ finite length word with $|\jjv|=l$, such that $$B_{n,l,m}\cap [\jjv]=\emptyset.$$
\end{lemma}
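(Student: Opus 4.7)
The plan is to exploit the non-degeneracy hypothesis to extract a finite word $\iiv_0$ such that $A_{\iiv_0}\langle z_N-z_0\rangle\in M$, and then to take $\jjv = \iiv_0 0^k$ for suitably large $k$ with all points of $[\jjv]$ sharing a common witness, namely
$$p := f_{\iiv_0}(z_N) = \Pi(\iiv_0\1)\in\Gamma.$$
The existence of such an $\iiv_0$ is exactly the content of non-degeneracy: $\langle z_N-z_0\rangle$ lies outside $\bigcap_{k}\bigcap_{|\iiv|=k}A_{\iiv}^{-1}(M^c)$, so for some $\iiv_0$ we have $A_{\iiv_0}(z_N-z_0)\in M$.

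With $\jjv = \iiv_0 0^k$, $l=|\jjv|$ and $\ii\in[\jjv]$, we have $i_l=0$, so by the convention on empty cylinders the forbidden set reduces to $F = \Gamma_{\iiv_0 0^k} \cup \Gamma_{\iiv_0 0^{k-1}\cdot 1\cdot 0^m}$. To show that this single witness $p$ proves $\ii\notin B_{n,l,m}$, I need the three properties (i) $\langle p - \Pi(\ii)\rangle\in M$, (ii) $\|p-\Pi(\ii)\|\geq 1/n$, and (iii) $p\notin F$, uniformly for all $\ii\in[\jjv]$.

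The essential calculation is the decomposition
$$p-\Pi(\ii) = A_{\iiv_0}(z_N-z_0) + \bigl(f_{\iiv_0}(z_0)-\Pi(\ii)\bigr),$$
in which the main term is a fixed nonzero vector whose projective class lies in the open cone $M$, and the remainder is bounded in norm by $\|A_{\iiv_0 0^k}\|\cdot\mathrm{diam}(\Gamma)$, which tends to $0$ as $k\to\infty$. Simultaneously, both pieces of $F$ concentrate near $f_{\iiv_0}(z_0)$: the first one contains $f_{\iiv_0}(z_0)$ (since $z_0$ is the fixed point of $f_0$) and has diameter $O(\|A_{\iiv_0 0^k}\|)$, while the second one contains the base vertex $f_{\iiv_0 0^{k-1}}(z_1)$, which converges to $f_{\iiv_0}(z_0)$ as $k\to\infty$ because $f_0^{k-1}(z_1)\to z_0$, and has diameter $O(\|A_{\iiv_0 0^{k-1} 1 0^m}\|)\to 0$ as $m\to\infty$.

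The main subtlety is not any single estimate but the order in which the parameters are fixed. I would first choose $k$ large enough so that the perturbation bound places the direction $\langle p-\Pi(\ii)\rangle$ into the open set $M$ and traps the first piece of $F$ in a ball around $f_{\iiv_0}(z_0)$ of radius much smaller than $\|A_{\iiv_0}(z_N-z_0)\|$; then $m$ large enough so the second piece of $F$ is absorbed into the same small ball, while $p$, sitting at uniform distance at least $\tfrac12\|A_{\iiv_0}(z_N-z_0)\|$ from that neighborhood, remains outside $F$; and finally $n$ with $1/n\leq \tfrac12\|A_{\iiv_0}(z_N-z_0)\|$, which is a uniform lower bound on $\|p-\Pi(\ii)\|$ for $\ii\in[\jjv]$. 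This exhibits $B_{n,l,m}\cap[\jjv]=\emptyset$.
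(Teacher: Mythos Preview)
Your proposal is correct and follows essentially the same approach as the paper's proof: you use the non-degeneracy to find $\iiv_0$ with $\langle A_{\iiv_0}(z_N-z_0)\rangle\in M$, set $\jjv=\iiv_0 0^k$, and take the common witness $p=f_{\iiv_0}(z_N)$, then control the perturbation $f_{\iiv_0}(z_0)-\Pi(\ii)$ by choosing $k$ large. The only minor difference is that the paper simply fixes $m=1$, since for large $k$ the cylinder $\Gamma_{\iiv_0 0^{k-1}\cdot 1\cdot 0}$ already sits in an arbitrarily small ball around $f_{\iiv_0}(z_0)$ (its base point $f_{\iiv_0 0^{k-1}}(z_1)\to f_{\iiv_0}(z_0)$ and its diameter is $O(\|A_0\|^{k-1})$); your extra freedom in $m$ is harmless but unnecessary.
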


\begin{proof}
    Our first claim is that there exists a finite sequence $\iiv$ such that $\langle A_{\iiv}(z_N-z_0)\rangle\in M$. Suppose that this is not the case. That is, for every finite length word $\langle A_{\iiv}(z_N-z_0)\rangle\in M^c$. Equivalently, for every finite length word $\iiv$, $\langle z_N-z_0\rangle\in A_{\iiv}^{-1}(M^c)$. Thus, $\langle z_N-z_0\rangle\in \bigcap_{k=0}^{\infty}\bigcap_{|\iiv|=k} A_{\iiv}^{-1}(M^c)$, which contradicts to our non-degeneracy assumption.

Let us fix an $\iiv$ such that $\langle A_{\iiv}(z_N-z_0)\rangle\in
M$. Then $f_{\iiv}(z_N)\in M(f_{\iiv}(z_0))$. By continuity, one can
choose $k\geq1$ large enough such that for every $\ii\in[\iiv0^k]$,
$$
f_{\iiv}(z_N)\in M(\Pi(\ii))$$
and
$$
\|f_{\iiv}(z_0)-\Pi(\ii)\|=\|A_{\iiv0^k}(z_0-\Pi(\sigma^{|\iiv|+k}\ii))\|\leq\|A_{\iiv}\|\|A_0^k\|\mathrm{diam}(\Gamma)\leq\frac{1}{2}\|A_{\iiv}(z_N-z_0)\|,.
$$
where we used the fact that $f_0(z_0)=z_0$. Then
$$
\|\Pi(\ii)-f_{\iiv}(z_N)\|\geq\|A_{\iiv}(z_N-z_0)\|-\|f_{\iiv}(z_0)-\Pi(\ii)\|>\frac{1}{2}\|A_{\iiv}(z_N-z_0)\|.
$$
 We get that for every  $\ii\in[\iiv0^k]$
    $$
    f_{\iiv}(z_N)\in \left(M(\Pi(\ii))\setminus B_{\frac{1}{2}\|A_{\iiv}(z_N-z_0)\|}(\Pi(\ii))\right)\cap\Gamma\setminus(\Gamma_{\iiv0^k}\cup\Gamma_{\iiv0^{k-1}10}\cup\Gamma_{\iiv|_{|\iiv|-1}(\iiv_{|\iiv|}-1)N})\neq\emptyset.
    $$
    By fixing $\jjv:=\iiv0^k$, $l:=|\jjv|$, $m:=1$ and $n:=\left\lceil\frac{2}{\|A_{\iiv}(z_N-z_0)\|}\right\rceil$,  we see that $B_{n,l,m}\cap[\jjv]=\emptyset$.
\end{proof}

\begin{prop}\label{prop:Bdim}
    Let us assume that $\mathcal{S}$ is non-degenerate. Then $\dim_P\pi(B)<1$. Moreover, for any $\nu$ fully supported ergodic measure on $\Sigma$, $\nu(B)=0$.
\end{prop}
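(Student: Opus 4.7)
The plan is to exploit Lemma~\ref{lem:tobad} to reduce both statements to the analysis of sequences whose tails avoid a single fixed cylinder. By that lemma, choose $n_0,l_0,m_0\geq 1$ and a finite word $\jjv$ of length $l_0$ with $B_{n_0,l_0,m_0}\cap[\jjv]=\emptyset$. Unwinding the nested definition of $B$ yields
\begin{equation*}
B\subseteq\bigcup_{K=0}^{\infty}B_{n_0,l_0,m_0,K}\subseteq\bigcup_{K=0}^{\infty}C_K,\qquad C_K:=\bigl\{\ii\in\Sigma:\sigma^k\ii\notin[\jjv]\text{ for all }k\geq K\bigr\}.
\end{equation*}

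The measure assertion is then immediate. For any fully supported $\sigma$-invariant ergodic probability $\nu$ we have $\nu([\jjv])>0$, so Birkhoff's ergodic theorem ensures that $\nu$-almost every $\ii\in\Sigma$ visits $[\jjv]$ with positive asymptotic frequency, and in particular infinitely often. Consequently $\nu(C_K)=0$ for every $K\geq 0$, and countable subadditivity yields $\nu(B)=0$.

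For the packing dimension bound, introduce the sofic subshift
\begin{equation*}
\tilde{\Sigma}:=\bigl\{\jj\in\Sigma:\sigma^k\jj\notin[\jjv]\text{ for all }k\geq 0\bigr\}
\end{equation*}
of sequences globally avoiding $\jjv$. Each $\ii\in C_K$ factors as $\iiv\cdot\jj$ with $|\iiv|=K$ and $\jj\in\tilde{\Sigma}$, so $\pi(C_K)\subseteq\bigcup_{|\iiv|=K}g_{\iiv}\bigl(\pi(\tilde{\Sigma})\bigr)$. Since the maps $g_{\iiv}$ are bi-Lipschitz and packing dimension is countably stable,
\begin{equation*}
\dim_P\pi(B)\leq\sup_{K\geq 0}\dim_P\pi(C_K)=\dim_P\pi(\tilde{\Sigma}),
\end{equation*}
so it suffices to show $\dim_P\pi(\tilde{\Sigma})<1$. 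Because $\pi([\iiv])$ is an interval of length $\lambda_\iiv$, covering $\pi(\tilde{\Sigma})$ by length-$n$ admissible cylinders and running the standard symbolic covering estimate bounds $\dim_P\pi(\tilde{\Sigma})$ by the unique root $s_0$ of $\tilde{P}(s)=0$, where $\tilde{P}(s):=\lim_{n\to\infty}\tfrac{1}{n}\log\sum_{\iiv\in\tilde{\Sigma}_n}\lambda_\iiv^{s}$ and $\tilde{\Sigma}_n$ denotes the set of length-$n$ admissible words.

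The main obstacle is the strict inequality $s_0<1$, equivalently $\tilde{P}(1)<0$. I would prove it via a Perron--Frobenius argument on the sofic presentation of $\tilde{\Sigma}$. After recoding in $(l_0-1)$-blocks, $\tilde{\Sigma}$ becomes a topological Markov chain whose $\lv$-weighted transition operator $\tilde{T}$ is obtained from the stochastic transition operator of the full $\lv$-Bernoulli chain by deleting the transitions that complete an occurrence of the forbidden word $\jjv$. The full operator is irreducible with Perron eigenvalue $1$ and strictly positive eigenvector, while $\tilde{T}$ is a strict sub-operator with spectral radius strictly less than $1$, since $\jjv$ has positive $\lv$-probability; this gives $\tilde{P}(1)<0$. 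Concavity and monotonicity of $\tilde{P}$ (exactly as in Lemma~\ref{lem:pressure}) then force $s_0<1$, yielding $\dim_P\pi(B)\leq s_0<1$.
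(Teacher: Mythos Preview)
Your argument is correct in outline and shares the paper's core reduction via Lemma~\ref{lem:tobad}: both proofs obtain the inclusion $B\subseteq\bigcup_{K}\sigma^{-K}\{\ii:\sigma^k\ii\notin[\jjv]\text{ for all }k\geq0\}$ and dispose of the measure statement by ergodicity and $\nu([\jjv])>0$. The difference lies in how you establish the packing-dimension bound. You pass to the full sofic shift $\tilde{\Sigma}$ of $\jjv$-avoiding sequences, introduce its pressure $\tilde{P}$, and invoke a Perron--Frobenius comparison with the full Bernoulli transition operator to get $\tilde{P}(1)<0$. The paper bypasses all of this: it simply observes that $\pi(B_{n_0,l_0,m_0,0})$ is contained in the attractor $\Lambda$ of the self-similar sub-IFS $\{g_{\iiv}:|\iiv|=l_0,\ \iiv\neq\jjv\}$, whose contraction ratios satisfy $\sum_{|\iiv|=l_0,\,\iiv\neq\jjv}\lambda_{\iiv}=1-\lambda_{\jjv}<1$, so $\dim_B\Lambda<1$ by the standard similarity-dimension upper bound. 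This one-line observation replaces your block recoding and spectral argument entirely. Your route is not wrong, and it would generalize to more complicated forbidden-pattern sets, but here it is considerably heavier than what is needed.

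One small correction: your appeal to ``concavity and monotonicity of $\tilde{P}$ (exactly as in Lemma~\ref{lem:pressure})'' is misplaced. Lemma~\ref{lem:pressure} concerns the matrix pressure $P(t)$ of \eqref{eq:pressuredef1}, which is increasing and concave in $t$; your $\tilde{P}(s)=\lim\frac{1}{n}\log\sum_{\iiv\in\tilde{\Sigma}_n}\lambda_{\iiv}^s$ is \emph{decreasing} in $s$ (since all $\lambda_{\iiv}<1$) and convex. This does not damage the conclusion---strict monotonicity alone gives the unique root $s_0<1$ once $\tilde{P}(1)<0$---but the reference and the claimed concavity should be dropped.
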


\begin{proof}
    By definition, $B_{n,l,m}\supseteq B_{n+1,l,m}$, $B_{n,l,m}\supseteq B_{n,l+1,m}$ and $B_{n,l,m}\supseteq B_{n,l,m+1}$. Moreover, $B_{n,l,m,K}=\sigma^{-K}B_{n,l,m,0}$. In particular, $\sigma^{-1}B_{n,l,m,0}=B_{n,l,m,1}\supseteq B_{n,l,m,0}$. Thus, for every $n\geq1$
    \begin{equation}\label{etech1}
    B_{n,l,m,0}\subseteq\bigcup_{|\iiv|=q}\varrho_{\iiv}(B_{n,l,m,0}),
    \end{equation}
   where $\varrho_{\iiv}(\ii)=\iiv\ii$. Let $n_0\geq1$, $l_0\geq1$, $m_0\geq1$ be natural numbers and $\jjv$ be the finite length word with $|\jjv|=l_0$  as in Lemma~\ref{lem:tobad}, then
   $$
   B_{n_0,l_0,m_0,0}\cap[\jjv]=\bigcap_{k=0}^{\infty}\left(\sigma^{-k}B_{n_0,m_0,l_0}\cap[\jjv]\right)\subseteq B_{n_0,m_0,l_0}\cap[\jjv]=\emptyset.
   $$
   Thus,
    \begin{equation}\label{etech2}
    B_{n_0,l_0,m_0,0}\subseteq\bigcup_{\stackrel{|\iiv|=l_0}{ \iiv\neq\jjv}}\varrho_{\iiv}(B_{n_0,l_0,m_0,0}).
    \end{equation}
    Hence, $\sigma^{p}\ii\notin[\jjv]$ for every $\ii\in B_{n_0,l_0,m_0,0}$ and for every $p\geq1$. Indeed, if there exist $\ii\in B_{n_0,l_0,m_0,0}$ and $p\geq1$ such that $\sigma^p\ii\in[\jjv]$ then there exist a finite length word $\iiv$ with $|\iiv|=p$ such that $B\cap[\iiv\jjv]\neq\emptyset$. But by equations \eqref{etech1} and \eqref{etech2},
    $$
    B_{n_0,l_0,m_0,0}\subseteq\bigcup_{|\iiv_1|=p}\varrho_{\iiv_1}(B_{n_0,l_0,m_0,0})\subseteq\bigcup_{|\iiv_1|=p}\bigcup_{\stackrel{|\iiv_2|=l_0}{ \iiv_2\neq\jjv}}\varrho_{\iiv_1}(\varrho_{\iiv_2}(B_{n_0,l_0,m_0,0}))\subseteq\bigcup_{|\iiv_1|=p}\bigcup_{\stackrel{|\iiv_2|=l_0}{ \iiv_2\neq\jjv}}[\iiv_1\iiv_2]
    $$
    which is a contradiction. But for any fully supported ergodic measure $\nu$, $\nu([\jjv])>0$ and therefore $\nu(B_{n_0,l_0,m_0,0})=0$. The second statement of the lemma follows by $$\nu(B)\leq\inf_{n,l,m}\nu(\bigcup_{K=0}^{\infty}B_{n,l,m,K})\leq\sum_{K=0}^{\infty}\nu(B_{n_0,l_0,m_0,K})=\sum_{K=0}^{\infty}\nu(B_{n_0,l_0,m_0,0})=0.$$

    To prove the first assertion of the proposition, observe that by equation~\eqref{etech2}
    $$
    \pi(B_{n_0,l_0,m_0,0})\subseteq\bigcup_{\stackrel{|\iiv|=l_0}{ \iiv\neq\jjv}}g_{\iiv}(\pi(B_{n_0,l_0,m_0,0})).
    $$
    Therefore, $\pi(B_{n_0,l_0,m_0,0})$ is contained in the attractor $\Lambda$ of the IFS $\left\{g_{\iiv}\right\}_{\stackrel{|\iiv|=l_0}{ \iiv\neq\jjv}}$, for which $\dim_B\Lambda<1$.
Hence,
$$
    \dim_P\pi(B)\leq\inf_{n,l,m}\overline{\dim}_B\pi(B_{n,l,m,0})\leq\overline{\dim}_B\pi(B_{n_0,l_0,m_0,0})\leq\dim_B\Lambda<1.
    $$

\end{proof}

\begin{lemma}\label{lem:ub}
Let us assume that $\mathcal{S}$ is non-degenerate. Then for every $\ii\in\Sigma\setminus B$
$$\alpha(\pi(\ii))\leq\limsup_{n\to+\infty}\frac{\log\|A_{\ii|_n}\|}{\log\lambda_{\ii|_n}}.$$
\end{lemma}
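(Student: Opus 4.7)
The plan is to construct, for each $\ii \in \Sigma\setminus B$, a sequence $x_k \to \pi(\ii)$ in $[0,1]$ along which the ratio $\log\|v(\pi(\ii))-v(x_k)\|/\log|\pi(\ii)-x_k|$ is bounded above by the logarithmic growth rate of $\|A_{\ii|_k}\|$. Unpacking the definition $B = \bigcap_{n,l,m}\bigcup_{K} B_{n,l,m,K}$, the hypothesis $\ii \notin B$ gives parameters $n,l,m \geq 1$ for which the index set $K_0 := \{k \geq 0 : \sigma^k\ii \notin B_{n,l,m}\}$ is infinite. For each $k \in K_0$, the failure of the defining condition of $B_{n,l,m}$ at $\sigma^k\ii$ yields a witness point $y_k \in \Gamma$ with $\al{y_k - \Pi(\sigma^k\ii)} \in M$ and $\|y_k - \Pi(\sigma^k\ii)\| \geq 1/n$. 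Choose any $\jj_k \in \Sigma$ with $\Pi(\jj_k) = y_k$ and set $\ii_k := \ii|_k\jj_k$, $x_k := \pi(\ii_k)$.

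Two estimates then drive the proof. First, since $\Pi(\ii) - \Pi(\ii_k) = A_{\ii|_k}(\Pi(\sigma^k\ii) - y_k)$ and the one-dimensional subspace $\al{\Pi(\sigma^k\ii) - y_k}$ lies in the multicone $M$, the key consequence \eqref{eq:compare} of dominated splitting gives
$$\|v(\pi(\ii)) - v(x_k)\| = \|A_{\ii|_k}(\Pi(\sigma^k\ii) - y_k)\| \geq C'\|A_{\ii|_k}\|\cdot\|\Pi(\sigma^k\ii) - y_k\| \geq \frac{C'}{n}\|A_{\ii|_k}\|.$$
Second, both $\pi(\ii)$ and $x_k$ belong to $g_{\ii|_k}([0,1])$, an interval of length $\lambda_{\ii|_k}$, so $|\pi(\ii) - x_k| \leq \lambda_{\ii|_k}$. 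Finally $x_k \neq \pi(\ii)$ because $v(x_k) = \Pi(\ii_k) \neq \Pi(\ii) = v(\pi(\ii))$ (the displayed lower bound is strictly positive), while $x_k \to \pi(\ii)$ through $K_0$ since $\lambda_{\ii|_k} \to 0$.

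For $k$ large, both log-distances are negative; dividing the first inequality by the (negative) second one and reversing the inequality accordingly yields
$$\frac{\log\|v(\pi(\ii)) - v(x_k)\|}{\log|\pi(\ii) - x_k|} \leq \frac{\log(C'/n) + \log\|A_{\ii|_k}\|}{\log\lambda_{\ii|_k}}.$$
Since $\log(C'/n)/\log\lambda_{\ii|_k} \to 0$, taking the $\liminf$ along $K_0$ and then comparing with the full $\limsup$ gives $\alpha(\pi(\ii)) \leq \limsup_{n}\log\|A_{\ii|_n}\|/\log\lambda_{\ii|_n}$. The conceptual obstacle is recognising that the multicone clause $\al{y_k - \Pi(\sigma^k\ii)} \in M$ built into the negation of $B_{n,l,m}$ is precisely what makes \eqref{eq:compare} applicable, and that the quantitative separation $\|y_k - \Pi(\sigma^k\ii)\| \geq 1/n$ contributes only the harmless additive constant $\log(C'/n)$ which is absorbed in the limit. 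The cylinder-exclusion part of the definition of $B_{n,l,m}$ is not needed for this upper bound; it will presumably be exploited in a companion lemma giving a matching lower bound, where one must rule out $x_k$ being atypically close to $\pi(\ii)$.
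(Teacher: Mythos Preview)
Your proof is correct and follows the same core strategy as the paper: negate the definition of $B$ to obtain infinitely many indices $k$ at which a witness $y_k\in\Gamma$ satisfies $\al{y_k-\Pi(\sigma^k\ii)}\in M$ and $\|y_k-\Pi(\sigma^k\ii)\|\geq 1/n$, then invoke \eqref{eq:compare} to bound $\|\Pi(\ii)-\Pi(\ii_k)\|$ from below by a constant times $\|A_{\ii|_k}\|$.

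Your argument is in fact slightly cleaner than the paper's. The paper uses the cylinder-exclusion clause in $B_{n,l,m}$ to guarantee $k_p\leq\ii\wedge\jj_p\leq k_p+l$ and $\ii\vee\jj_p\leq m$, and then appeals (implicitly via \eqref{eq:bound}) to a two-sided comparison of $|\pi(\ii)-\pi(\jj_p)|$ with $\lambda_{\ii|_{k_p}}$. You observe that for the \emph{upper} bound on $\alpha(\pi(\ii))$ only the trivial inequality $|\pi(\ii)-x_k|\leq\lambda_{\ii|_k}$ is needed in the denominator, and that $x_k\neq\pi(\ii)$ follows already from $\|\Pi(\ii)-\Pi(\ii_k)\|>0$; hence the cylinder exclusions play no role here. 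Your closing remark that they ``will presumably be exploited in a companion lemma giving a matching lower bound'' is natural speculation but does not match the paper: the paper uses them only in its own version of the present proof, not elsewhere.
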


\begin{proof}
    Let $\ii\in\Sigma\setminus B$. Then there exist $n\geq1$, $l\geq1$, $m\geq1$ and a sequence $\left\{k_p\right\}_{p=1}^{\infty}$ such that $k_p\to\infty$ as $p\to\infty$ and
\begin{multline}\label{eq:antiB}
    \left(M(\Pi(\sigma^{k_p}\ii))\setminus B_{1/n}(\Pi(\sigma^{k_p}\ii))\right)\cap\\ \Gamma\setminus(\Gamma_{\sigma^{k_p}\ii|_l}\cup\Gamma_{\sigma^{k_p}\ii|_{l-1}(i_{{k_p}+l}-1)(N-1)^m}\cup\Gamma_{\sigma^{k_p}\ii|_{l-1}(i_{k_p+l}+1)0^m})\neq\emptyset
    \end{multline}
    Hence, there exists a sequence $\jj_p$ such that $k_p\leq\ii\wedge\jj_p\leq k_p+l$, $\ii\vee\jj_p\leq m$,
\begin{equation}\label{eq:antiC}
  \Pi(\sigma^{k_p}\jj_p)\in M(\Pi(\sigma^{k_p}\ii))\text{ and }\|\Pi(\sigma^{k_p}\jj_p)-\Pi(\sigma^{k_p}\ii)\|>\frac{1}{n}.
\end{equation}
Thus,
    \begin{multline*}
    \alpha(\pi(\ii))=\liminf_{\pi(\jj)\to\pi(\ii)}\frac{\log\|\Pi(\ii)-\Pi(\jj)\|}{\log|\pi(\ii)-\pi(\jj)|}\leq \liminf_{p\to+\infty}\frac{\log\|\Pi(\ii)-\Pi(\jj_p)\|}{\log|\pi(\ii)-\pi(\jj_p)|}=\\
    \liminf_{p\to+\infty}\frac{\log\|A_{\ii|_{k_p}}(\Pi(\sigma^{k_p}\ii)- \Pi(\sigma^{k_p}\jj_p))\|}{\log|\lambda_{\ii|_{\ii\wedge\jj_p+\ii\vee\jj_p}}(\pi(\sigma^{\ii\wedge\jj_p+\ii\vee\jj_p}\ii)- \pi(\sigma^{\ii\wedge\jj_p+\ii\vee\jj_p}\jj_p))|},
    \end{multline*}
     and by \eqref{eq:compare}, \eqref{eq:antiC},
     \begin{multline*}
     \liminf_{p\to+\infty}\frac{\log\|A_{\ii|_{k_p}}(\Pi(\sigma^{k_p}\ii)-\Pi(\sigma^{k_p}\jj_p))\|}{\log|\lambda_{\ii|_{\ii\wedge\jj_p+\ii\vee\jj_p}}(\pi(\sigma^{\ii\wedge\jj_p+\ii\vee\jj_p}\ii)-\pi(\sigma^{\ii\wedge\jj_p+\ii\vee\jj_p}\jj_p))|}\leq\\
     \liminf_{p\to+\infty}\frac{\log(C^{-1}/n)+\log\|A_{\ii|_{k_p}}\|}{\log\lambda_{\ii|_{k_p}}+\log d'}\leq\limsup_{p\to+\infty}\frac{\log\|A_{\ii|_p}\|}{\log\lambda_{\ii|_p}},
     \end{multline*}
     where $d'=(\max_i\lambda_i)^{m+l}$.
\end{proof}

\begin{lemma}\label{lem:erg}
    Let us assume that $\mathcal{S}$ is non-degenerate. Then for every ergodic, $\sigma$-invariant, fully supported measure $\mu$ on $\Sigma$ such that $\sum_{k=0}^{\infty}(\mu[0^k]+\mu([N^k])$ is finite, then
    $$\alpha(\pi(\ii))=\lim_{n\to+\infty}\frac{\log\|A_{\ii|_n}\|}{\log\lambda_{\ii|_n}}\text{ for $\mu$-a.e. $\ii\in\Sigma$.}$$
\end{lemma}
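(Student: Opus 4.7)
The plan is to pinch $\alpha(\pi(\ii))$ between matching upper and lower bounds that coincide with the limit on the right. That this limit exists $\mu$-almost everywhere and equals a constant is standard ergodic theory: Birkhoff's theorem gives $\tfrac{1}{n}\log\lambda_{\ii|_n} \to -\chi_\lambda := \int \log \lambda_{i_1}\,d\mu < 0$, and applying Birkhoff to the H\"older potential $\psi(\ii) = \log\|A_{i_1}|E(\sigma\ii)\|$, combined with \eqref{eq:multip} and the comparison $\|A_{\ii|_n}|E(\sigma^n\ii)\|\asymp\|A_{\ii|_n}\|$ from Theorem~\ref{thm:BochiGour}, gives $\tfrac{1}{n}\log\|A_{\ii|_n}\|\to -\chi_A$ for some $\chi_A>0$ and $\mu$-a.e.\ $\ii$. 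Hence $\log\|A_{\ii|_n}\|/\log\lambda_{\ii|_n}\to \chi_A/\chi_\lambda$ for $\mu$-a.e.\ $\ii$.

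The upper bound is then immediate: Lemma~\ref{lem:ub} gives $\alpha(\pi(\ii)) \leq \limsup_n \log\|A_{\ii|_n}\|/\log\lambda_{\ii|_n}$ on $\Sigma\setminus B$, and Proposition~\ref{prop:Bdim} yields $\mu(B)=0$ since $\mu$ is ergodic and fully supported.

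For the matching lower bound I would first exploit the summability hypothesis. Setting
\[
\ell_n(\ii) := \max\{k\geq 0 : \sigma^n\ii\in[0^k]\cup[(N-1)^k]\},
\]
shift invariance gives $\mu(\{\ii:\ell_n(\ii)\geq \varepsilon n\}) \leq \mu([0^{\lceil\varepsilon n\rceil}]) + \mu([(N-1)^{\lceil\varepsilon n\rceil}])$, and summability of $\mu([0^k])+\mu([(N-1)^k])$ in $k$ makes the right-hand side summable in $n$ for every $\varepsilon>0$. A Borel--Cantelli argument then yields $\ell_n(\ii)/n\to 0$ for $\mu$-a.e.\ $\ii$. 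The same summability forces $\mu(\{\0\})=\mu(\{\1\})=0$, and hence $\sigma^k\ii\notin\{\0,\1\}$ for any $k\geq 0$, $\mu$-a.e.; this in turn ensures that for $\mu$-typical $\ii$, every sequence $\jj_p$ with $\pi(\jj_p)\to\pi(\ii)$ must satisfy $k_p := \ii\wedge\jj_p\to\infty$. Now for any such $\jj$, writing $k := \ii\wedge\jj$ and $m := \ii\vee\jj$, the definition of $\vee$ forces $m\leq\ell_{k+1}(\ii)$. Combining $|\pi(\ii)-\pi(\jj)|\geq K^{-1}\lambda_{\ii|_{k+m}}$ from \eqref{eq:bound} with the trivial $\|\Pi(\ii)-\Pi(\jj)\|\leq\|A_{\ii|_k}\|\cdot\mathrm{diam}(\Gamma)$ yields
\[
\frac{\log\|\Pi(\ii)-\Pi(\jj)\|}{\log|\pi(\ii)-\pi(\jj)|} \geq \frac{\log\|A_{\ii|_k}\|+O(1)}{\log\lambda_{\ii|_{k+m}}+O(1)} = \frac{k\chi_A(1+o(1))}{(k+m)\chi_\lambda(1+o(1))},
\]
which tends to $\chi_A/\chi_\lambda$ as $k\to\infty$, thanks to $m/k\to 0$.

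The principal obstacle is precisely securing $m/k\to 0$ on a full-$\mu$-measure set; without the summability hypothesis one could have $m$ comparable to $k$ along a positive-measure set of sequences, producing a strictly smaller liminf and destroying the equality with the ergodic limit. The summability assumption is the exact mechanism that rules this out, so that the crude upper bound on $\|\Pi(\ii)-\Pi(\jj)\|$ together with the lower bound \eqref{eq:bound} already suffice.
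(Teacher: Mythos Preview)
Your proof is correct and follows essentially the same route as the paper: the upper bound via Lemma~\ref{lem:ub} and Proposition~\ref{prop:Bdim}, and the lower bound via the crude estimate $\|\Pi(\ii)-\Pi(\jj)\|\leq\|A_{\ii|_{\ii\wedge\jj}}\|\,\mathrm{diam}(\Gamma)$ together with \eqref{eq:bound}, reduced to showing $\ii\vee\jj/\ii\wedge\jj\to 0$ almost surely by Borel--Cantelli and the summability hypothesis. Your packaging via $\ell_n(\ii)$ and the explicit appeal to Birkhoff for the existence of the limit are slightly more detailed than the paper's version (which writes the same Borel--Cantelli set as $R_n$ directly in terms of cylinders), but the arguments are equivalent.
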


\begin{proof}
    By Proposition~\ref{prop:Bdim}, we have that $\mu(B)=0$. Thus, by Lemma~\ref{lem:ub}, for $\mu$-a.e. $\ii$
    $$
    \alpha(\pi(\ii))\leq\lim_{n\to+\infty}\frac{\log\|A_{\ii|_n}\|}{\log\lambda_{\ii|_n}}.
    $$

    On the other hand, for every $\ii\in\Sigma$,
    $$
    \alpha(\pi(\ii))=\liminf_{\pi(\jj)\to\pi(\ii)}\frac{\log\|\Pi(\ii)-\Pi(\jj)\|}{\log|\pi(\ii)-\pi(\jj)|}\geq\liminf_{\jj\to\ii}\frac{\log\|A_{\ii|_{\ii\wedge\jj}}\|}{\log\lambda_{\ii|_{\ii\wedge\jj+\ii\vee\jj}}+\log \min_{i}\lambda_i}.
    $$

    Hence, to verify the statement of the lemma, it is enough to show that $$\lim_{\jj\to\ii}\frac{\log\lambda_{\ii|_{\ii\wedge\jj}}}{\log\lambda_{\ii|_{\ii\wedge\jj+\ii\vee\jj}}}=1\text{ for $\mu$-a.e. $\ii$.}
    $$
    It is easy to see that from $\lim_{\jj\to\ii}\frac{\ii\vee\jj}{\ii\wedge\jj}=0$ follows the previous equation. Let
    $$
    R_{n}=\left\{\ii:\exists\jj_k\text{ s. t. }\jj_k\to\ii\text{ as }k\to\infty\text{ and }\lim_{k\to\infty}\frac{\ii\vee\jj_k}{\ii\wedge\jj_k}>\frac{1}{n}\right\}
    $$
    In other words,
    $$
    R_n=\bigcap_{K=0}^{\infty}\bigcup_{k=K}^{\infty}\bigcup_{i_1,\dots,i_k=0}^N[i_1,\dots,i_k,\overbrace{0,\dots,0}^{\lfloor k/n\rfloor}]\cup[i_1,\dots,i_k,\overbrace{N,\dots,N}^{\lfloor k/n\rfloor}]
    $$
    Therefore, for any $\mu$ ergodic $\sigma$-invariant measure and for every $K\geq0$
    $$
    \mu(R_n)\leq\sum_{k=K}^{\infty}(\mu([0^{\lfloor k/n\rfloor}])+\mu([N^{\lfloor k/n\rfloor}])).
    $$
    Since by assumption the sum on the right hand side is summable, we get $\mu(R_n)=0$ for every $n\geq1$.
\end{proof}

Let us recall that we call the function $v:[0,1]\mapsto\Gamma$
\textit{symmetric} if
\begin{equation}
\lambda_0=\lambda_{N-1}\text{ and }\lim_{k\to\infty}\frac{\|A_0^{k}\|}{\|A_{N-1}^k\|}=1.
\end{equation}

\begin{lemma}\label{lem:lb}
Let us assume that $\mathcal{S}$ is non-degenerate and symmetric. Then for every $\ii\in\Sigma$
$$\alpha(\pi(\ii))\geq\liminf_{n\to+\infty}\frac{\log\|A_{\ii|_n}\|}{\log\lambda_{\ii|_n}}.$$
\end{lemma}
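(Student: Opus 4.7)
The plan is to show that for any sequence $\jj_n\in\Sigma$ with $\pi(\jj_n)\to\pi(\ii)$, writing $k_n=\ii\wedge\jj_n$ and $m_n=\ii\vee\jj_n$, both the numerator $\log\|\Pi(\ii)-\Pi(\jj_n)\|$ and the denominator $\log|\pi(\ii)-\pi(\jj_n)|$ agree with $\log\|A_{\ii|_{k_n+m_n}}\|$ and $\log\lambda_{\ii|_{k_n+m_n}}$ respectively, up to an additive $O(1)$. Since \eqref{eq:bound} forces $k_n+m_n\to\infty$ and both right-hand sides tend to $-\infty$, the $O(1)$ error is absorbed in the ratio, giving $\alpha(\pi(\ii))\geq\liminf_N\log\|A_{\ii|_N}\|/\log\lambda_{\ii|_N}$.

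For the denominator, \eqref{eq:bound} gives $|\pi(\ii)-\pi(\jj_n)|\asymp\lambda_{\ii|_{k_n+m_n}}+\lambda_{\jj_n|_{k_n+m_n}}$. When $m_n>0$ and $j_{k_n+1}=i_{k_n+1}+1$, the definition of $\ii\vee\jj$ forces $i_{k_n+2}=\cdots=i_{k_n+m_n+1}=N-1$ and symmetrically the corresponding block of $\jj_n$ to be $0^{m_n}$; combined with $\lambda_0=\lambda_{N-1}$ from \eqref{esymm}, the two $\lambda$-factors differ only in the bounded ratio $\lambda_{i_{k_n+1}}/\lambda_{i_{k_n+1}+1}$, so $|\pi(\ii)-\pi(\jj_n)|\asymp\lambda_{\ii|_{k_n+m_n}}$. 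The cases $m_n=0$ or $|i_{k_n+1}-j_{k_n+1}|\geq 2$ are even simpler.

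For the numerator, the cases $m_n=0$ or $|i_{k_n+1}-j_{k_n+1}|\geq 2$ follow directly from submultiplicativity: $\|\Pi(\ii)-\Pi(\jj_n)\|\leq 2\,\mathrm{diam}(\Gamma)\,\|A_{\ii|_{k_n}}\|$. Otherwise, using the cross-condition $f_{i_{k_n+1}}(z_N)=z_{i_{k_n+1}+1}=f_{i_{k_n+1}+1}(z_0)$ together with the fixed-point relations $f_{N-1}(z_N)=z_N$ and $f_0(z_0)=z_0$, I unwind both sides at the common vertex to get
\begin{equation*}
\Pi(\ii)-\Pi(\jj_n)=A_{\ii|_{k_n}}\bigl[A_{i_{k_n+1}}A_{N-1}^{m_n}(\Pi(\sigma^{k_n+m_n+1}\ii)-z_N)-A_{i_{k_n+1}+1}A_{0}^{m_n}(\Pi(\sigma^{k_n+m_n+1}\jj_n)-z_0)\bigr].
\end{equation*}
The second part of \eqref{esymm}, namely $\|A_0^{m_n}\|/\|A_{N-1}^{m_n}\|\to 1$, absorbs the $\jj_n$-contribution, yielding $\|\Pi(\ii)-\Pi(\jj_n)\|\leq C\|A_{\ii|_{k_n}}\|\|A_{N-1}^{m_n}\|$.

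The decisive step is then the two-sided comparison $\|A_{\ii|_{k_n}}\|\|A_{N-1}^{m_n}\|\asymp\|A_{\ii|_{k_n+m_n}}\|$: submultiplicativity is one direction, and for the reverse the block of $N-1$'s in the tail of $\ii$ places the attracting direction $E(\sigma^{k_n+m_n}\ii)$ inside the multicone $M$, so item~\ref{item:bochigour5} of Theorem~\ref{thm:BochiGour} combined with \eqref{eq:compare} applied to both the outer factor $A_{\ii|_{k_n}}$ and the inner $A_{i_{k_n+1}}A_{N-1}^{m_n-1}$ restricted to $E(\sigma^{k_n+m_n}\ii)$ upgrades the one-sided bound to an asymptotic, using that $\|A_{N-1}^{m_n-1}\|\asymp\|A_{N-1}^{m_n}\|$. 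I expect this cone-based asymptotic to be the main obstacle: generically an operator-norm product is only a one-sided estimate, and without the symmetry \eqref{esymm} the $A_0^{m_n}$ contribution on the $\jj_n$ side could a priori dominate the $A_{N-1}^{m_n}$ bound coming from $\ii$ and prevent the clean matching with $\|A_{\ii|_{k_n+m_n}}\|$.
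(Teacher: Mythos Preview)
Your proposal is correct and follows essentially the same approach as the paper. Both arguments unwind $\Pi(\ii)-\Pi(\jj)$ at the common vertex via the zipper relation, then use the dominated splitting (Theorem~\ref{thm:BochiGour}\eqref{item:bochigour5} and \eqref{eq:compare}) together with the symmetry \eqref{esymm} to reconcile the matrix norms; the paper packages the key step as the two-sided bound $\|A_{\jj|_{\ii\wedge\jj+\ii\vee\jj}}\|/\|A_{\ii|_{\ii\wedge\jj+\ii\vee\jj}}\|\asymp 1$, while you equivalently show $\|A_{\ii|_{k_n}}\|\|A_{N-1}^{m_n}\|\asymp\|A_{\ii|_{k_n+m_n}}\|$, but the underlying computation is the same.
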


\begin{proof}
Let us observe that by the zipper property
$f_i(\Pi(\0))=f_{i-1}(\1)$ for every $1\leq i\leq N-1$. Moreover,
for any $\ii,\jj$ with $i_{\ii\wedge\jj+1}=j_{\ii\wedge\jj+1}+1$,
$$\ii\vee\jj=\min\{\sigma^{\ii\wedge\jj+1}\ii\wedge\1,\sigma^{\ii\wedge\jj+1}\jj\wedge\0\}.$$
Thus, if $i_{\ii\wedge\jj+1}=j_{\ii\wedge\jj+1}+1$
\begin{align}
\|\Pi(\ii)-\Pi(\jj)\|&=\|\Pi(\ii)-\Pi(\ii|_{\ii\wedge\jj}i_{\ii\wedge\jj+1}\0)+\Pi(\jj|_{\ii\wedge\jj}j_{\ii\wedge\jj+1}\1)-\Pi(\jj)\|\nonumber\\
&=\|A_{\ii|_{\ii\wedge\jj+\ii\vee\jj}}(\Pi(\sigma^{\ii\wedge\jj+\ii\vee\jj}\ii)-\Pi(\0))+A_{\ii|_{\ii\wedge\jj+\ii\vee\jj}}(\Pi(\1)-\Pi(\sigma^{\ii\wedge\jj+\ii\vee\jj}\jj))\|\nonumber\\
&\leq(\|A_{\ii|_{\ii\wedge\jj+\ii\vee\jj}}\|+\|A_{\jj|_{\ii\wedge\jj+\ii\vee\jj}}\|)\mathrm{diam}(\Gamma).\label{eq:this}
\end{align}
The case $i_{\ii\wedge\jj+1}=j_{\ii\wedge\jj+1}-1$ is similar, and
if $|i_{\ii\wedge\jj+1}-j_{\ii\wedge\jj+1}|\neq1$ then
$\ii\vee\jj=0$, so \eqref{eq:this} holds trivially. Moreover by
\eqref{eq:bound}, there exist constants $K_1,K_2>0$ such that for
every $\ii,\jj\in\Sigma$
    \begin{equation}\label{eq:localH}
    \frac{\log\|\Pi(\ii)-\Pi(\jj)\|}{\log|\pi(\ii)-\pi(\jj)|}\geq\frac{-\log K_1+\log(\|A_{\ii|_{\ii\wedge\jj+\ii\vee\jj}}\|+\|A_{\jj|_{\ii\wedge\jj+\ii\vee\jj}}\|)}{\log(\lambda_{\ii|_{\ii\wedge\jj+\ii\vee\jj}}+\lambda_{\jj|_{\ii\wedge\jj+\ii\vee\jj}})+\log K_2}.
    \end{equation}
Therefore,
\begin{multline*}
\alpha(\pi(\ii))=\liminf_{\pi(\jj)\to\pi(\ii)}\frac{\log\|\Pi(\ii)-\Pi(\jj)\|}{\log|\pi(\ii)-\pi(\jj)|}\geq\\
\liminf_{\jj\to\ii}\frac{-\log K_1+\log(\|A_{\ii|_{\ii\wedge\jj+\ii\vee\jj}}\|+\|A_{\jj|_{\ii\wedge\jj+\ii\vee\jj}}\|)}{\log(\lambda_{\ii|_{\ii\wedge\jj+\ii\vee\jj}}+\lambda_{\jj|_{\ii\wedge\jj+\ii\vee\jj}})+\log K_2}=\\
\liminf_{\jj\to\ii}\frac{\log\|A_{\ii|_{\ii\wedge\jj+\ii\vee\jj}}\|\left(-\frac{\log K_1}{\log\|A_{\ii|_{\ii\wedge\jj+\ii\vee\jj}}\|}+1+\frac{\log\left(1+\frac{\|A_{\jj|_{\ii\wedge\jj+\ii\vee\jj}}\|}{\|A_{\ii|_{\ii\wedge\jj+\ii\vee\jj}}\|}\right)}{\log\|A_{\ii|_{\ii\wedge\jj+\ii\vee\jj}}\|}\right)}{\log\lambda_{\ii|_{\ii\wedge\jj+\ii\vee\jj}}\left(1+\frac{\log 2K_2}{\log\lambda_{\ii|_{\ii\wedge\jj+\ii\vee\jj}}}\right)}.
\end{multline*}
So, to verify the statement of the lemma, it is enough to show that
there exists a constant $C>0$ such that for every,
$\ii,\jj\in\Sigma$
$$
C^{-1}\leq\frac{\|A_{\jj|_{\ii\wedge\jj+\ii\vee\jj}}\|}{\|A_{\ii|_{\ii\wedge\jj+\ii\vee\jj}}\|}\leq C.
$$
By Theorem~\ref{thm:BochiGour}\eqref{item:bochigour5} and \eqref{eq:multip}, there exist $C'>0$ such that
\begin{align*}
\|A_{\ii|_{\ii\wedge\jj+\ii\vee\jj}}\|&\geq \|A_{\ii|_{\ii\wedge\jj+\ii\vee\jj}}|E(\ii')\|=\|A_{\ii|_{\ii\wedge\jj}}|E(\sigma^{\ii\wedge\jj}\ii|_{\ii\vee\jj}\ii')\|\|A_{\sigma^{\ii\wedge\jj}\ii|_{\ii\vee\jj}}|E(\ii')\|\\
&\geq C'\|A_{\ii|_{\ii\wedge\jj}}\|\|A_{\sigma^{\ii\wedge\jj}\jj|_{\ii\vee\jj}}\|
\end{align*}
and
$$
\|A_{\jj|_{\ii\wedge\jj+\ii\vee\jj}}\|\leq\|A_{\jj|_{\ii\wedge\jj}}\|\|A_{\sigma^{\ii\wedge\jj}\jj|_{\ii\vee\jj}}\|
$$
clearly. The other bounds are similar. But if
$i_{\ii\wedge\jj+1}=j_{\ii\wedge\jj+1}+1$ then
$\|A_{\sigma^{\ii\wedge\jj}\jj|_{\ii\vee\jj}}\|=\|A_0^{\ii\vee\jj}\|$
and
$\|A_{\sigma^{\ii\wedge\jj}\ii|_{\ii\vee\jj}}\|=\|A_{N-1}^{\ii\vee\jj}\|$.
Thus, by \eqref{esymm},
$$
\alpha(\pi(\ii))\geq \liminf_{\jj\to\ii}\frac{\log\|A_{\ii|_{\ii\wedge\jj+\ii\vee\jj}}\|}{\log\lambda_{\ii|_{\ii\wedge\jj+\ii\vee\jj}}}\geq\liminf_{n\to+\infty}\frac{\log\|A_{\ii|_n}\|}{\log\lambda_{\ii|_n}}.
$$

\end{proof}

\begin{proof}[Proof of Theorem~\ref{thm:nondeg}]
    First, we show that for $\mathcal{L}$-a.e. $x$, the local H\"older exponent is a constant. Since $\mu_0=\left\{\lambda_1,\dots,\lambda_N\right\}^{\N}$, it is easy to see that $\pi_*\mu_0=\mathcal{L}|_{[0,1]}$. Thus, it is enough to show that for $\mu_0$-a.e. $\ii\in\Sigma$, $\alpha(\pi(\ii))$ is a constant.

But by Proposition~\ref{prop:pressure}, there exists $\widehat{\alpha}$ such that for $\mu_0$-a.e. $\ii$
    $$
    \widehat{\alpha}=\lim_{n\to+\infty}\frac{\log\|A_{\ii|_n}\|}{\log\lambda_{\ii|_n}}.
    $$
    By definition of Bernoulli measure, $\sum_{k=0}^{\infty}\mu_0([0^k])+\mu_0([N^k])=\frac{1}{1-\lambda_1}+\frac{1}{1-\lambda_N}$. Thus, by Lemma~\ref{lem:erg}, $\alpha(\pi(\ii))=\widehat{\alpha}$ for $\mu_0$-a.e. $\ii$, and by Lemma~\ref{lem:boundsonder}, we have $\widehat{\alpha}\geq1/d_0$.

    We show now the lower bound for \eqref{eq:nondegmulti}. By Lemma~\ref{lem:pressure}, the map $t\mapsto P'(t)$ is continuous and monotone increasing on $\R$. Hence, for every $\beta\in(\alpha_{\min},\alpha_{\max})$ there exists a $t_0\in\R$ such that $P'(t_0)=\beta$. By Proposition~\ref{prop:pressure}, there exists a $\mu_{t_0}$ Gibbs measure on $\Sigma$ such that for $$
    \lim_{n\to+\infty}\frac{\log\|A_{\ii|_n}\|}{\log\lambda_{\ii|_n}}=\beta\text{ for $\mu_{t_0}$-a.e. }\ii\in\Sigma.
    $$
    It is easy to see that for any $\ii$ and $n\geq1$, $\mu_{t_0}([\ii|_n])>0$. Thus, by Lemma~\ref{lem:erg},
    $$
    \alpha(\pi(\ii))=\beta\text{ for $\mu_{t_0}$-a.e. }\ii\in\Sigma.
    $$
    Therefore, by Proposition~\ref{prop:pressure}
    \begin{multline*}
      \dim_H\left\{x\in[0,1]:\alpha(x)=\beta\right\}\geq\dim_H\mu_{t_0}\circ\pi^{-1}=t_0P'(t_0)-P(t_0)=\\
      t_0\beta-P(t_0)\geq\inf_{t\in\R}\left\{t\beta-P(t)\right\}.
      \end{multline*}

    On the other hand, by Lemma~\ref{lem:ub}
    \begin{align*}
    \dim_H\left\{x\in[0,1]:\right.&\left.\alpha(x)=\beta\right\}\\
    &\leq\max\left\{\dim_H\pi(B),\dim_H\left\{\ii\in\Sigma\setminus B:\alpha(\pi(\ii))=\beta\right\}\right\}\\
    &\leq\max\left\{\dim_H\pi(B),\dim_H\left\{\ii\in\Sigma:\limsup_{n\to+\infty}\frac{\log\|A_{\ii|_n}\|}{\log\lambda_{\ii|_n}}\geq\beta\right\}\right\}\\
    &\leq\max\left\{\dim_H\pi(B),\inf_{t\leq 0}\left\{t\beta-P(t)\right\}\right\},
    \end{align*}
    where in the last inequality we used Lemma~\ref{lem:upperbound}.

By Proposition~\ref{prop:pressure}, the function $t\mapsto tP'(t)-P(t)$ is continuous and $P(0)=-1$. By Proposition~\ref{prop:Bdim}, $\dim_H\pi(B)<1$, thus, there exists an open neighbourhood of $t=0$ such that for every $t\in(-\rho,\rho)$, $tP'(t)-P(t)>\dim_P\pi(B)$. In other words, there exists a $\varepsilon>0$ such that $P'(t)\in(\widehat{\alpha}-\varepsilon,\widehat{\alpha}+\varepsilon)$ for every $t\in(-\rho,\rho)$. Hence, for every $\beta\in[\widehat{\alpha},\widehat{\alpha}+\varepsilon]$ there exists a $t_0\leq0$ such that $P'(t_0)=\beta$ and $\inf_{t\leq 0}\left\{t\beta-P(t)\right\}=t_0P'(t_0)-P(t_0)>\dim_H\pi(B)$ which completes the proof of \eqref{eq:nondegmulti}.

Finally, if \eqref{esymm} holds then by Lemma~\ref{lem:lb} and Lemma~\ref{lem:upperbound}
\begin{multline*}
    \dim_H\left\{x\in[0,1]:\alpha(x)=\beta\right\}\leq\\ \dim_H\left\{\ii\in\Sigma:\liminf_{n\to+\infty}\frac{\log\|A_{\ii|_n}\|}{\log\lambda_{\ii|_n}}\leq\beta\right\}\leq\inf_{t\geq 0}\left\{t\beta-P(t)\right\},
    \end{multline*}
which completes the proof.
\end{proof}

\section{Zippers with Assumption A}\label{sec:AssAZippers}

Now, we turn to the case when our affine zipper satisfies the
Assumption A. We will show that in fact in this case the exceptional
set $B$, introduced in \eqref{eq:setBad} is empty. That is, there
are no points, in which local neighbourhood, the curve leaves the
cone rapidly. First, let us introduce a natural ordering on
$\Sigma^*$. For any $\iiv,\jjv\in\Sigma^*$ with $\iiv\wedge\jjv=m$,
let
$$\iiv< \jjv \Leftrightarrow i_{m+1}<j_{m+1}.$$
Moreover, let $Z_1:=\left\{z_0,\dots, z_N\right\}$ the endpoints of
the curves $f_i(\Gamma)$ and let $Z_n:=\left\{f_{\iiv}(z_k),
|\iiv|=n, k=0,\ldots,N-1\right\}$.

For simplicity, let us denote $f_{\iiv}(z_0)$ by $z_{\iiv}$. Observe that by the Zipper property $f_{\iiv}(z_N)=z_{\iiv|_{|\iiv|-1}(i_{|\iiv|}-1)}$.

\begin{prop}\label{prop:AssAdimB}
Let us assume that $\mathcal{S}$ is non-degenerate and satisfies the Assumption A. Then $B=\emptyset$, where the set $B$ is defined in \eqref{eq:setBad}.
\end{prop}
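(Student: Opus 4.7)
My plan is to exhibit, for every $\ii\in\Sigma$, a single triple $(n,l,m)$ and then, for every level $k\geq 0$, a witness $\jj\in\Sigma$ certifying $\sigma^k\ii\notin B_{n,l,m}$. Since this rules out $\ii$ from $\bigcap_{n,l,m}\bigcup_K\bigcap_{k\geq K}\sigma^{-k}B_{n,l,m}=B$, it will prove $B=\emptyset$. The witnesses will always be one of the two extremal sequences $\0,\1$ (so that $\Pi(\jj)\in\{z_0,z_N\}$), and Assumption A supplies exactly the geometric control guaranteeing that at every scale one of the endpoints sits in the prescribed cone.

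\textbf{Step 1 (Cone visibility of the endpoints).} I would first establish that, under Assumption A, for every $\jj\in\Sigma$
\[
z_N-\Pi(\jj)\in\overline{C}\quad\text{and}\quad\Pi(\jj)-z_0\in\overline{C},
\]
with membership in $C^o$ unless $\jj=\1$ (respectively $\jj=\0$). Iterating $\Pi(\jj)=f_{j_1}(\Pi(\sigma\jj))$ and the zipper identity $z_{j_1+1}=f_{j_1}(z_N)$ produces the telescoping series
\[
z_N-\Pi(\jj)=\sum_{k=0}^{\infty}A_{\jj|_k}\bigl(z_N-z_{j_{k+1}+1}\bigr).
\]
Each vector $z_N-z_{i+1}=\sum_{p=i+1}^{N-1}A_p(z_N-z_0)$ lies in $\overline{C}$ by items (1)--(2) of Assumption A applied to $z_N-z_0\in C^o$, and in $C^o$ when $i<N-1$; the forward invariance $A_iC\subset C^o$ transports this through $A_{\jj|_k}$. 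Convexity and closedness of $C$ then yield the displayed inclusions, and the presence of at least one nonzero open-cone summand when $\jj\neq\1$ upgrades the conclusion to $C^o$.

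\textbf{Step 2 (Choice of parameters).} Set $\delta=\|z_N-z_0\|>0$, pick $n$ with $1/n<\delta/3$, take $m=1$, and choose $l\geq 2$ large enough that $\Gamma_{0^l}\subset B_{\delta/3}(z_0)$ and $\Gamma_{(N-1)^l}\subset B_{\delta/3}(z_N)$; this is possible because $\|A_{0^l}\|,\|A_{(N-1)^l}\|\to 0$. The strong open set condition together with $z_N=\Pi(\1)$ and $z_0=\Pi(\0)$ gives $z_N\in\Gamma_\iiv\Leftrightarrow\iiv=(N-1)^{|\iiv|}$ and symmetrically for $z_0$. A direct inspection of the three forbidden cylinders in the definition of $B_{n,l,m}$ then shows that $z_N$ is forbidden at step $k$ iff $\sigma^k\ii|_l=(N-1)^l$ (the other two candidate words end in $0^m$ or would require the nonexistent letter $N$), and symmetrically for $z_0$.

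\textbf{Step 3 (Choosing $\jj$ at every $k$).} Fix any $k$. If $\sigma^k\ii|_l=(N-1)^l$, then $\Pi(\sigma^k\ii)\in B_{\delta/3}(z_N)$, so $\|\Pi(\sigma^k\ii)-z_0\|>2\delta/3>1/n$ and $\sigma^k\ii|_l\neq 0^l$: take $\jj=\0$. Otherwise $z_N$ is not forbidden; if $\|\Pi(\sigma^k\ii)-z_N\|>1/n$ take $\jj=\1$, and if $\|\Pi(\sigma^k\ii)-z_N\|\leq 1/n$ the containment $\Pi(\sigma^k\ii)\in B_{1/n}(z_N)$ rules out $\sigma^k\ii|_l=0^l$ (the balls $B_{\delta/3}(z_0)$ and $B_{\delta/3}(z_N)$ are disjoint) and gives $\|\Pi(\sigma^k\ii)-z_0\|>2\delta/3>1/n$, so $\jj=\0$ works. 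In every case Step 1 grants $\langle\Pi(\jj)-\Pi(\sigma^k\ii)\rangle\in C^o=M$, since $-C^o$ and $C^o$ coincide in projective space. Therefore $\sigma^k\ii\notin B_{n,l,m}$.

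\textbf{Step 4 (Conclusion).} Since $\sigma^k\ii\notin B_{n,l,m}$ for every $k$, we have $\ii\notin B$; as $\ii$ was arbitrary, $B=\emptyset$. The main technical obstacle I foresee is Step 1: the telescoping expansion only delivers membership in $\overline{C}$ automatically, and the promotion to $C^o$---which is needed because the multicone $M$ appearing in the definition of $B_{n,l,m}$ is the open interior of $C$---relies on both the convexity of $C$ and the strict hypothesis $z_N-z_0\in C^o$, exactly the two halves of Assumption A.
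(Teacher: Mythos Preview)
Your proof is correct, but it takes a different and somewhat heavier route than the paper's.

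The paper proves the single global statement
\[
\langle\Pi(\ii)-\Pi(\jj)\rangle\in C\quad\text{for every }\ii\neq\jj\in\Sigma,
\]
by first observing that \emph{consecutive} vertices satisfy $\langle z_{\iiv}-z_{\iiv'}\rangle=\langle A_{\iiv''}(z_N-z_0)\rangle\in C$, then using convexity of $C$ (if $\langle x-y\rangle,\langle y-w\rangle\in C$ then $\langle x-w\rangle\in C$) to chain this to arbitrary pairs of vertices in $Z_n$, and finally passing to the limit. From $C(\Pi(\ii))\cap\Gamma=\Gamma$ the emptiness of $B$ is immediate: at every scale, \emph{every} point of $\Gamma$ outside a small ball is a valid witness, so no cylinder bookkeeping is required.

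Your approach instead proves the cone membership only for the two extremal points $\Pi(\0)=z_0$ and $\Pi(\1)=z_N$ (your Step~1 is the special case $\jj\in\{\0,\1\}$ of the paper's statement), and then compensates in Steps~2--3 by an explicit case analysis ensuring that, at every level $k$, at least one of these two endpoints stays outside the three forbidden cylinders and outside the $1/n$-ball. This works, but the bookkeeping is unnecessary once the global statement is available. Two minor remarks: the implication $z_N\in\Gamma_{\iiv}\Rightarrow\iiv=(N-1)^{|\iiv|}$ is actually a consequence of your Step~1 (since $\langle z_N-\Pi(\jj)\rangle\in C^o$ for $\jj\neq\1$ forces $\Pi(\jj)\neq z_N$), rather than of SOSC directly; and your identification $M=C^o$ is justified since Assumption~A makes $C$ itself a valid multicone for the dominated splitting.

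What the paper's route buys, beyond brevity, is that the global fact $\langle\Pi(\ii)-\Pi(\jj)\rangle\in C$ is reused verbatim in the proof of the next lemma (Lemma~4.2), where arbitrary nearby $\jj$---not just endpoints---must be handled.
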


\begin{proof}
It is enough to show that for every $\ii\in\Sigma$
\begin{equation}\label{eq:kell}
C(\Pi(\ii))\cap\Gamma=\Gamma,
\end{equation}
which is equivalent to show that for every $\ii,\jj\in\Sigma$, $\langle\Pi(\ii)-\Pi(\jj)\rangle\in C$.

Since $\langle z_0-z_N\rangle\in C$ and $C$ is invariant w.r.t all of the matrices then for every $\iiv\in\Sigma^*$, $\langle z_{\iiv}-z_{\iiv|_{|\iiv|-1}(i_{|\iiv|}-1)}\rangle=\langle f_{\iiv}(z_0)-f_{\iiv}(z_N)\rangle=\langle A_{\iiv}(z_0-z_N)\rangle\in C$.

Observe by convexity of $C$, for any three vectors $x,y,w\in\R^d$,
if $\langle x-y\rangle\in C$ and $\langle y-w\rangle\in C$ then
$\langle x-w\rangle\in C$. Thus, by Assumption A and the convexity
of the cone, for every $n\geq1$, and for every $\iiv<\jjv\in\Sigma$
with $|\iiv|=|\jjv|=n$, $\langle z_{\iiv}-z_{\jjv}\rangle\in C$.

Thus, for every $\ii\neq\jj\in\Sigma$ and for every $n\geq1$, $\langle f_{\ii|_n}(z_0)-f_{\jj|_n}(z_0)\rangle=\langle z_{\ii|_n}-z_{\jj|_n}\rangle\in C$. Since $C$ is closed, by taking $n$ tends to infinity, we get that $\langle \Pi(\ii)-\Pi(\jj)\rangle\in C$.
\end{proof}

\begin{lemma}\label{lem:ubassA}
Let us assume that $\mathcal{S}$ is non-degenerate and satisfies the Assumption A. Then for any $\mu$ fully supported, ergodic, $\sigma$-invariant measure on $\Sigma$
$$
\limsup_{\pi(\jj)\to\pi(\ii)}\frac{\log\|\Pi(\ii)-\Pi(\jj)\|}{\log|\pi(\ii)-\pi(\jj)|}\leq\lim_{n\to+\infty}\frac{\log\|A_{\ii|_n}\|}{\log\lambda_{\ii|_n}}\text{ for $\mu$-a.e. $\ii$.} $$
\end{lemma}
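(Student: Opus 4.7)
The plan is to combine Proposition~\ref{prop:AssAdimB}, which under Assumption~A places $\langle\Pi(\ii)-\Pi(\jj)\rangle$ inside the cone $C\subseteq M$ for every pair, with Birkhoff's ergodic theorem. The key gain over Lemma~\ref{lem:ub} is that cone control now holds uniformly in $\ii,\jj$, so one can produce a lower bound on $\|\Pi(\ii)-\Pi(\jj_p)\|$ along every $\pi$-approaching sequence $\jj_p$, not merely outside the exceptional set $B$ (which is moreover empty here by Proposition~\ref{prop:AssAdimB}).

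First, Birkhoff applied to the H\"older continuous potentials $\ii\mapsto\log\|A_{i_1}|E(\sigma\ii)\|$ and $\ii\mapsto\log\lambda_{i_1}$, together with Theorem~\ref{thm:BochiGour}\eqref{item:bochigour5} and \eqref{eq:multip}, gives for $\mu$-a.e. $\ii$ the existence of
$$\alpha_\ii:=\lim_{n\to\infty}\frac{\log\|A_{\ii|_n}\|}{\log\lambda_{\ii|_n}},$$
which by ergodicity is $\mu$-a.s.\ constant. Since fully supported ergodic $\sigma$-invariant measures on $\Sigma$ have no atoms, the countable set $T=\bigcup_{k\geq 0}\sigma^{-k}(\{\0,\1\})$ is $\mu$-null, so I further restrict to $\ii\notin T$. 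For such an $\ii$, any sequence $\jj_p$ with $\pi(\jj_p)\to\pi(\ii)$ must satisfy $k_p:=\ii\wedge\jj_p\to\infty$: otherwise the $(k_p+1)$-cylinders containing $\pi(\ii)$ and $\pi(\jj_p)$ would be forced to be adjacent and $\pi(\ii)$ would lie on their common boundary, placing $\ii$ in $T$.

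The geometric heart of the argument is the lower bound $\|\Pi(\ii)-\Pi(\jj_p)\|\geq c\,\|A_{\ii|_{k_p+1}}\|$ for some $c>0$ independent of $p$. When $|i_{k_p+1}-j_{p,k_p+1}|\geq 2$ the cylinders $f_{i_{k_p+1}}(\Gamma)$ and $f_{j_{p,k_p+1}}(\Gamma)$ are separated by a positive distance, so $\|\Pi(\sigma^{k_p}\ii)-\Pi(\sigma^{k_p}\jj_p)\|$ is bounded below and \eqref{eq:compare} combined with Proposition~\ref{prop:AssAdimB} gives the bound directly. When $|i_{k_p+1}-j_{p,k_p+1}|=1$, the zipper identity $f_i(z_N)=z_{i+1}$ at the shared vertex permits the decomposition
$$\Pi(\sigma^{k_p}\ii)-\Pi(\sigma^{k_p}\jj_p)=A_{i_{k_p+1}}(\Pi(\sigma^{k_p+1}\ii)-z_0)+A_{j_{p,k_p+1}}(z_N-\Pi(\sigma^{k_p+1}\jj_p)),$$
in which both summands have directions lying in $C$ by Proposition~\ref{prop:AssAdimB} together with forward invariance; pointedness of $C$ then forbids cancellation, and applying \eqref{eq:compare} to the summand whose base vector has norm bounded below (ensured by the structure of $m_p$, since $\sigma^{k_p+1}\ii$ begins with $(N-1)$'s and $\sigma^{k_p+1}\jj_p$ with $0$'s) yields $\|\Pi(\sigma^{k_p}\ii)-\Pi(\sigma^{k_p}\jj_p)\|\geq c'\max(\|A_{i_{k_p+1}}\|,\|A_{j_{p,k_p+1}}\|)$. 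A final application of cone invariance produces the announced lower bound.

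Finally, \eqref{eq:bound} gives $|\pi(\ii)-\pi(\jj_p)|\leq 2K\lambda_{\ii|_{k_p}}$, using $\ii|_{k_p}=\jj_p|_{k_p}$ and that the remaining factor is at most $1$; taking logs and combining with the previous estimate gives
$$\frac{\log\|\Pi(\ii)-\Pi(\jj_p)\|}{\log|\pi(\ii)-\pi(\jj_p)|}\leq\frac{\log\|A_{\ii|_{k_p+1}}\|+O(1)}{\log\lambda_{\ii|_{k_p}}+O(1)}=\frac{\log\|A_{\ii|_{k_p+1}}\|}{\log\lambda_{\ii|_{k_p+1}}}\cdot\frac{\log\lambda_{\ii|_{k_p+1}}+O(1)}{\log\lambda_{\ii|_{k_p}}+O(1)}+o(1).$$
The first factor tends to $\alpha_\ii$ by the ergodic step, and the second to $1$ because $\log\lambda_{\ii|_{k_p+1}}-\log\lambda_{\ii|_{k_p}}=\log\lambda_{i_{k_p+1}}$ is bounded while $|\log\lambda_{\ii|_{k_p}}|\to\infty$. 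Passing to the $\limsup$ over $\jj_p$ finishes the proof. The main obstacle is the geometric step: one must verify that under Assumption~A the pointedness and forward invariance of $C$ genuinely rule out cancellation in the sum decomposition, so that Proposition~\ref{prop:AssAdimB}'s qualitative cone statement upgrades to a quantitative matrix-norm lower bound.
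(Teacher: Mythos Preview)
Your overall strategy is right, and the paper follows the same outline: use the cone inclusion from Proposition~\ref{prop:AssAdimB} to lower-bound $\|\Pi(\ii)-\Pi(\jj)\|$ in terms of a matrix norm, then invoke ergodicity. The gap is in your geometric step.

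You claim $\|\Pi(\ii)-\Pi(\jj_p)\|\geq c\,\|A_{\ii|_{k_p+1}}\|$ with $c$ independent of $p$, arguing that in the adjacent case the decomposition
\[
\Pi(\sigma^{k_p}\ii)-\Pi(\sigma^{k_p}\jj_p)=A_{i_{k_p+1}}\bigl(\Pi(\sigma^{k_p+1}\ii)-z_0\bigr)+A_{j_{p,k_p+1}}\bigl(z_N-\Pi(\sigma^{k_p+1}\jj_p)\bigr)
\]
has at least one summand whose base vector is bounded below. This is not true. For $\ii\notin T$ the tail $\sigma^{k_p+1}\ii$ can still begin with arbitrarily many $0$'s as $k_p\to\infty$, forcing $\|\Pi(\sigma^{k_p+1}\ii)-z_0\|\to 0$; simultaneously $\jj_p$ is the \emph{arbitrary} approximating sequence over which you must take the $\limsup$, so nothing stops $\sigma^{k_p+1}\jj_p$ from beginning with equally many $(N-1)$'s, making the second base vector small as well. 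Pointedness of $C$ rules out cancellation between the two summands, but it cannot manufacture a lower bound when \emph{both} are small. Thus no uniform $c$ exists at depth $k_p+1$; your reference to ``the structure of $m_p$'' does not address this (and $m_p$ is never defined).

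The remedy, and what the paper does, is to descend to depth $\ii\wedge\jj+\ii\vee\jj$ rather than $\ii\wedge\jj+1$. At that depth the definition of $\ii\vee\jj$ guarantees one of the shifted sequences has just broken its run of extremal symbols, so the corresponding base vector is bounded below, and \eqref{eq:compare} gives
\[
\|\Pi(\ii)-\Pi(\jj)\|\geq c\,\|A_{\ii|_{\ii\wedge\jj+\ii\vee\jj}}\|.
\]
The price is that the ratio to control becomes $\dfrac{\log\|A_{\ii|_{\ii\wedge\jj+\ii\vee\jj}}\|}{\log\lambda_{\ii|_{\ii\wedge\jj}}}$, and this is where the full-support ergodicity hypothesis enters: it forces $\limsup_{\jj\to\ii}\frac{\ii\vee\jj}{\ii\wedge\jj+\ii\vee\jj}=0$ for $\mu$-a.e.\ $\ii$, which is exactly the extra ergodic input your argument at depth $k_p+1$ tried to avoid.
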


\begin{proof}
    Observe that
    \begin{multline*}
    \limsup_{\pi(\jj)\to\pi(\ii)}\frac{\log\|\Pi(\ii)-\Pi(\jj)\|}{\log|\pi(\ii)-\pi(\jj)|}= \\
    \limsup_{\pi(\jj)\to\pi(\ii)}\frac{\log\|A_{\ii|_{\ii\wedge\jj+\ii\vee\jj}}\left(\Pi(\sigma^{\ii\wedge\jj+\ii\vee\jj}\ii)- \Pi(\0)\right)+ A_{\jj|_{\ii\wedge\jj+\ii\vee\jj}}\left(\Pi(\1)-\Pi(\sigma^{\ii\wedge\jj+\ii\vee\jj}\jj)\right)\|}{|\lambda_{\ii|_{\ii\wedge\jj}}\left(\lambda_{\ii|_{\ii\vee\jj}}(\pi(\sigma^{\ii\wedge\jj+\ii\vee\jj}\ii)-0) +\lambda_{\jj|_{\ii\vee\jj}}(1-\pi(\sigma^{\ii\wedge\jj+\ii\vee\jj}\jj))\right)|}.
    \end{multline*}
By \eqref{eq:kell}, $\langle\Pi(\sigma^{\ii\wedge\jj+\ii\vee\jj}\ii)- \Pi(\0)\rangle, \langle\Pi(\1)-\Pi(\sigma^{\ii\wedge\jj+\ii\vee\jj}\jj)\rangle\in C$, therefore by \eqref{eq:compare}
$$
    \limsup_{\pi(\jj)\to\pi(\ii)}\frac{\log\|\Pi(\ii)-\Pi(\jj)\|}{\log|\pi(\ii)-\pi(\jj)|}\leq
\limsup_{\jj\to\ii}\frac{\log\|A_{\ii|_{\ii\wedge\jj+\ii\vee\jj}}\|+\log\left(1+\frac{\|A_{0}^{\ii\vee\jj}\|}{\|A_{N-1}^{\ii\vee\jj}\|}\right)}{\log\lambda_{\ii|_{\ii\wedge\jj}}}.
$$
It is easy to see that for any fully supported, ergodic,
$\sigma$-invariant measure $\mu$,
$\limsup_{\jj\to\ii}\frac{\ii\vee\jj}{\ii\wedge\jj+\ii\vee\jj}=0$
for $\mu$-a.e. $\ii$. Hence, by the previous inequality, the
statement follows similarly as in Lemma~\ref{lem:erg} .
\end{proof}

\begin{proof}[Proof of Theorem~\ref{thm:assA}.]
By Lemma~\ref{lem:erg} and Lemma~\ref{lem:ubassA}, for every $t\in\R$
$$
\alpha_r(\pi(\ii))=\lim_{n\to+\infty}\frac{\log\|A_{\ii|_n}\|}{\log\lambda_{\ii|_n}}\text{ for $\mu_{t}$-a.e. }\ii\in\Sigma.
$$
Thus, similarly to the proof of Theorem~\ref{thm:nondeg}
\begin{multline*}
      \dim_H\left\{x\in[0,1]:\alpha_r(x)=\beta\right\}\geq\dim_H\mu_{t_0}\circ\pi^{-1}=t_0P'(t_0)-P(t_0)=\\
      t_0\beta-P(t_0)\geq\inf_{t\in\R}\left\{t\beta-P(t)\right\},
      \end{multline*}
      where $t_0$ is defined such that $P'(t_0)=\beta$. On the other hand,
      $$
      \dim_H\left\{x\in[0,1]:\alpha_r(x)=\beta\right\}\leq\dim_H\left\{x\in[0,1]:\alpha(x)=\beta\right\}.
      $$
      By Proposition~\ref{prop:AssAdimB}, $B=\emptyset$, and similarly to the proof of Theorem~\ref{thm:nondeg},
      $$
      \dim_H\left\{x\in[0,1]:\alpha(x)=\beta\right\}\leq\inf_{t\leq 0}\left\{t\beta-P(t)\right\},
      $$
      for every $\beta\in[\hat{\alpha},\alpha_{\max}]$. If $\Gamma$ is symmetric then by Lemma~\ref{lem:lb} and Lemma~\ref{lem:upperbound}
      $$
      \dim_H\left\{x\in[0,1]:\alpha(x)=\beta\right\}\leq\inf_{t\geq 0}\left\{t\beta-P(t)\right\},
      $$
      which completes the proof.
\end{proof}

Now, we turn to the equivalence of the existence of pointwise
regular H\"older exponents and the Assumption~A. Before that, we
introduce another property and we show that in fact all of them are
equivalent. Denote $\mathrm{cv}(a,b)$ open line segment in $\R^d$
connecting two points $a,b$. Moreover, let us denote the orthogonal
projection to a subspace $\theta$ by $\proj_{\theta}$ and for a
subspace $\theta$ let $\theta^{\perp}$ be the orthogonal complement
of $\theta$. For a point $x$ and a subspace $\theta$, let
$\theta(x)=\{y\in\R^d:x-y\in\theta\}$.

\begin{definition}\label{def:wellord}
    We say that $Z_n$ is well ordered on $l\in G(d,d-1)$ if for any $\iiv_1<\iiv_2<\iiv_3$
    \begin{equation}\label{eq:wellordered}
    \mathrm{proj}_{l^\perp}(z_{\iiv_2}) \in \mathrm{cv}\big( \mathrm{proj}_{l^\perp}(z_{\iiv_1}), \mathrm{proj}_{l^\perp}(z_{\iiv_3}) \big).
    \end{equation}
We say that $Z_n$ is well ordered if there exists a $\delta>0$ such that $Z_n$ is well ordered for all $l\in B_{\delta}(F(\Sigma))$.
\end{definition}

Let us recall that $F:\Sigma\mapsto G(d,d-1)$ is the
H\"older-continuous function defined in Theorem~\ref{thm:BochiGour}.
So $B_{\delta}(F(\Sigma))$ is the $\delta>0$ neighbourhood of all
the possible subspaces on which the growth rate of the matrices is
at most the second singular value. For a visualisation of the well
ordered property, see Figure~\ref{fig:wellord}. Roughly speaking,
the well ordered property on $l\in G(d,d-1)$ means that the curve is
parallel to $l^{\perp}$. The next lemma indeed verifies that the
curve cannot turn back along $l^{\perp}$.

\begin{figure}
  \centering
  \includegraphics[width=80mm]{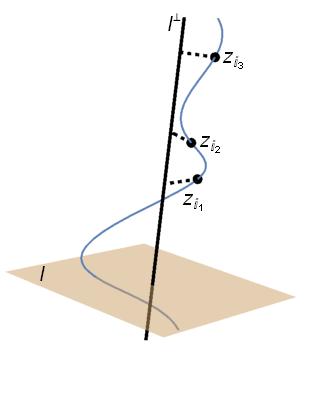}
  \caption{Well ordered property of $Z_n$ on $l\in G(d,d-1)$.}\label{fig:wellord}
\end{figure}

\begin{lemma}\label{lem:noturn}
    $Z_n$ is well ordered if and only if $\exists\delta>0,\forall \xv\in\R^d, \forall l\in B_{\delta}(F(\Sigma))$ either $\mathrm{cv}(z_{\iiv_1},z_{\iiv_2})\cap l(x)= \emptyset$ or $\mathrm{cv}(z_{\iiv_2},z_{\iiv_3})\cap l(x)=\emptyset$, for every $\iiv_1<\iiv_2<\iiv_3$.
\end{lemma}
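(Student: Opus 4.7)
The plan is to reduce the lemma to elementary one-dimensional geometry by exploiting that $l\in G(d,d-1)$ has a one-dimensional orthogonal complement. Pick a continuously varying unit vector $e(l)\in l^\perp$ on a neighbourhood of the relevant $l$ and set $\pi_l(y):=\langle y,e(l)\rangle$. The first step, a direct check using the parametrization $(1-t)a+tb$ of $\mathrm{cv}(a,b)$, is the key equivalence
$$
\mathrm{cv}(a,b)\cap l(x)\neq\emptyset \quad\Longleftrightarrow\quad \pi_l(x)\text{ lies strictly between }\pi_l(a)\text{ and }\pi_l(b)\text{ in }\R,
$$
since $(1-t)a+tb\in l(x)$ is equivalent to $(1-t)\pi_l(a)+t\pi_l(b)=\pi_l(x)$, which admits a solution $t\in(0,1)$ precisely in this case.

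Writing $a_k:=\pi_l(z_{\iiv_k})$ for $k=1,2,3$, the right-hand side of the lemma now translates into the assertion that the two open intervals with endpoints $\{a_1,a_2\}$ and $\{a_2,a_3\}$ are disjoint in $\R$, while the well-ordered condition reads simply: $a_2$ lies strictly between $a_1$ and $a_3$. The forward implication is then immediate: if $a_1<a_2<a_3$ (say), the intervals $(a_1,a_2)$ and $(a_2,a_3)$ share only the excluded endpoint $a_2$, hence are disjoint.

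For the reverse implication I would argue by contradiction. Assume the right-hand side holds with some $\delta>0$ but that at some $l_0\in B_\delta(F(\Sigma))$ and some $\iiv_1<\iiv_2<\iiv_3$ the value $a_2(l_0)$ fails to lie strictly between $a_1(l_0)$ and $a_3(l_0)$. In the non-degenerate case in which the three $a_i(l_0)$ are pairwise distinct, $a_2(l_0)$ lies outside the closed interval spanned by $a_1(l_0),a_3(l_0)$, so one of the two open intervals above is strictly contained in the other, and any $q$ chosen from the smaller one, realised as $\pi_{l_0}(x)$ for a suitable $x$, provides a direct violation of the right-hand side at $l_0$. The main obstacle is the degenerate case in which two of the $a_i(l_0)$ coincide, because then the two open intervals trivially fail to overlap even though the well-ordered property is violated.

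I would overcome this obstacle by a perturbation argument. For each pair $i\neq j$ with $z_{\iiv_i}\neq z_{\iiv_j}$, the continuous function $l\mapsto a_i(l)-a_j(l)=\langle z_{\iiv_i}-z_{\iiv_j},e(l)\rangle$ is not identically zero on $G(d,d-1)$, so its zero set is a nowhere-dense real-analytic subvariety, and the function takes both signs in every neighbourhood of $l_0$. Consequently I can select an arbitrarily small perturbation $l'$ of $l_0$ inside $B_\delta(F(\Sigma))$ at which all three projections are pairwise distinct and at which the ordering is still "wrong"; the non-degenerate case already settled then yields a violation of the right-hand side at $l'$, the desired contradiction. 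Triples for which some $z_{\iiv_i}=z_{\iiv_j}$ (possible because of the zipper identifications) render the segment $\mathrm{cv}(z_{\iiv_i},z_{\iiv_j})$ degenerate, so such triples contribute vacuously to both sides of the equivalence.
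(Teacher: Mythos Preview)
Your proof is correct and follows essentially the same route as the paper's: both reduce the statement to one-dimensional interval geometry via projection onto $l^\perp$, handle the forward direction directly from disjointness of adjacent intervals, and treat the reverse direction by contradiction together with a small perturbation of $l$ inside the open set $B_\delta(F(\Sigma))$ to escape degenerate coincidences of projections. Your perturbation step is in fact more explicit than the paper's (which simply asserts that a nearby $l'$ with strictly positive distance exists), though note that the sub-case $a_1=a_3\neq a_2$ already yields overlapping intervals directly and does not require your perturbation machinery.
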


\begin{proof}
Fix $\iiv_1<\iiv_2<\iiv_3\in\{0,\dots,N-1\}^n$. Suppose that $Z_n$ is well ordered but for all $\delta>0$ there exists $l\in B_{\delta}(F(\Sigma))$ and $\xv\in\R^d$ such that $\mathrm{cv}(z_{\iiv_1},z_{\iiv_2})\cap l(x)\neq \emptyset$ and $\mathrm{cv}(z_{\iiv_2},z_{\iiv_3})\cap l(x)\neq \emptyset$. Thus, $$\proj_{l^\perp}(x)\in\mathrm{cv}\big( \mathrm{proj}_{l^\perp}(z_{\iiv_2}), \mathrm{proj}_{l^\perp}(z_{\iiv_1}) \big)\cap\mathrm{cv}\big( \mathrm{proj}_{l^\perp}(z_{\iiv_2}), \mathrm{proj}_{l^\perp}(z_{\iiv_3}) \big)
$$
Since the right hand side is open, and non-empty line segment, therefore $$\mathrm{proj}_{l^\perp}(z_{\iiv_2})\notin\mathrm{cv}\big( \mathrm{proj}_{l^\perp}(z_{\iiv_1}), \mathrm{proj}_{l^\perp}(z_{\iiv_3}) \big),$$ which is a contradiction.

On the other hand, suppose that $Z_n$ satisfy the assumption of the lemma but not well ordered. Then for every $\delta>0$ there exists an $l\in B_{\delta}(F(\Sigma))$ such that $\mathrm{proj}_{l^\perp}(z_{\iiv_2}) \notin \mathrm{cv}\big( \mathrm{proj}_{l^\perp}(z_{\iiv_1}), \mathrm{proj}_{l^\perp}(z_{\iiv_3}) \big).$ Since $B_{\delta}(F(\Sigma))$ is open, there exists an $l'\in B_{\delta}(F(\Sigma))$ for which
$$
\mathrm{dist}(\mathrm{proj}_{l'^\perp}(z_{\iiv_2}),\mathrm{cv}\big( \mathrm{proj}_{l'^\perp}(z_{\iiv_1}), \mathrm{proj}_{l'^\perp}(z_{\iiv_3}) \big))>0.
$$
Thus, there exists $x\in\R^d$ that $\mathrm{cv}(z_{\iiv_1},z_{\iiv_2})\cap l'(x)\neq\emptyset$ and $\mathrm{cv}(z_{\iiv_3},z_{\iiv_2})\cap l'(x)\neq\emptyset$, which is again a contradiction.
\end{proof}

The next lemma gives us a method to check the well ordered property.

\begin{lemma}\label{lem:iff}
    $Z_0$ is well ordered if and only if for every $n\geq0$ $Z_n$ is well ordered.
\end{lemma}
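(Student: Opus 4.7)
The forward direction is immediate: any triple $z_{k_1} < z_{k_2} < z_{k_3}$ in $Z_0$ embeds into $Z_n$ in the same lexicographic order via the zipper identifications (e.g.\ $z_k \mapsto z_{k0^{n-1}}$, and $z_N \mapsto z_{(N-1)^n}$), so if $Z_n$ is well ordered on $l$, then so is $Z_0$.

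For the reverse direction I would proceed by induction on $n$. Assuming $Z_n$ is well ordered on every $l \in B_{\delta_n}(F(\Sigma))$, I seek $\delta_{n+1} > 0$ for which $Z_{n+1}$ is well ordered on $B_{\delta_{n+1}}(F(\Sigma))$. The key structural observation is the decomposition $Z_{n+1} = \bigcup_{i=0}^{N-1} f_i(Z_n)$, which under the lexicographic order is a concatenation of $N$ consecutive \emph{blocks} $f_i(Z_n)$, with adjacent blocks sharing an endpoint by the zipper property: $f_i(z_N) = f_{i+1}(z_0) = z_{i+1}$. Since well-orderedness on $l$ is equivalent to strict monotonicity of the scalar functional $\phi_l := \proj_{l^\perp}(\cdot)$ along the ordered sequence, the task reduces to showing $\phi_l$ is monotone along this concatenation.

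Within a single block $f_i(Z_n)$ the scalar $\phi_l(f_i(x)) = \phi_l(A_i x) + \phi_l(t_i)$ induces on $Z_n$ the same order as $\phi_{l'}$ with $l' := A_i^{-1}(l)$. By item~(\ref{item:BochiGour2}) of Theorem~\ref{thm:BochiGour} one has $A_i^{-1}(F(\Sigma)) \subseteq F(\Sigma)$, and $l \mapsto A_i^{-1}l$ is continuous on the compact Grassmannian $G(d,d-1)$, so I can choose $\delta_{n+1} \leq \delta_0$ small enough that $A_i^{-1}(l) \in B_{\delta_n}(F(\Sigma))$ for every $l \in B_{\delta_{n+1}}(F(\Sigma))$ and every $i$; the inductive hypothesis then yields monotonicity of $\phi_l$ on each block. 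For the transitions between blocks I invoke well-orderedness of $Z_0$ on $l$: the endpoint sequence $(\phi_l(z_0), \ldots, \phi_l(z_N))$ is strictly monotone, and block $i$ is internally monotone from $\phi_l(z_i)$ to $\phi_l(z_{i+1})$, so its direction is forced to match the common sign of $\phi_l(z_{i+1}) - \phi_l(z_i)$. Hence all blocks are monotone in the same direction and, being glued at shared endpoints, concatenate to a monotone sequence on $Z_{n+1}$.

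The principal technical point is the Grassmannian bookkeeping: one must combine the backward-invariance $A_i^{-1}(F(\Sigma)) \subseteq F(\Sigma)$ with compactness of $F(\Sigma)$ to guarantee that each shrinking step still leaves a positive neighborhood $\delta_{n+1} > 0$, while simultaneously enforcing $\delta_{n+1} \leq \delta_0$ so that the cross-block argument using well-orderedness of $Z_0$ remains available at every stage of the induction.
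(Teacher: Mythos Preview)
Your proof is correct and follows the same inductive strategy as the paper: decompose $Z_{n+1}=\bigcup_i f_i(Z_n)$, handle each block by pulling $l$ back to $A_i^{-1}l$ via Theorem~\ref{thm:BochiGour}\eqref{item:BochiGour2} and the inclusion $A_i^{-1}F(\Sigma)=F([i])\subseteq F(\Sigma)$, and glue the blocks using well-orderedness of the endpoint set $Z_0$. The difference is only in execution. The paper routes the argument through the equivalent ``no-turn'' characterisation of Lemma~\ref{lem:noturn} and argues by contradiction: assuming a minimal $n$ for which $Z_n$ fails, it locates a violating triple separated by some $l'(x)$, and then uses the within-block property to replace the three points by block endpoints, producing a violating triple in $Z_0$. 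Your direct formulation---well-orderedness on $l$ is exactly strict monotonicity of the scalar $\phi_l$, and monotone blocks glued at a monotone sequence of shared endpoints concatenate to a monotone sequence---bypasses Lemma~\ref{lem:noturn} altogether and is somewhat cleaner; the paper's version, on the other hand, keeps the separating-hyperplane picture explicit, which aligns with how the characterisation is used later in the proof of Theorem~\ref{thm:equi}.
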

\begin{proof}
The if part is trivial.

By definition, $Z_n=\bigcup_{k=0}^{N-1}f_k(Z_{n-1})$. By
Lemma~\ref{lem:noturn}, if $Z_{n-1}$ is well ordered then there
exists $\delta>0$ such that for every $l\in B_{\delta}(F(\Sigma))$
and for every $x\in\R^d$  either
$\mathrm{cv}(z_{\iiv_1},z_{\iiv_2})\cap l(x)= \emptyset$ or
$\mathrm{cv}(z_{\iiv_2},z_{\iiv_3})\cap l(x)=\emptyset$. Thus, in
particular for every $l\in B_{\delta}(F([k]))$. By
Theorem~\ref{thm:BochiGour}\eqref{item:BochiGour2}, there exists
$\delta'>0$ such that $A_kB_{\delta}(F([k]))\supseteq
B_{\delta'}(F(\Sigma))$. Thus, for every $l\in
B_{\delta'}(F(\Sigma))$ and for every $x\in\R^d$,
\begin{equation}\label{eq:fzn}
\text{either $\mathrm{cv}(f_k(z_{\iiv_1}),f_k(z_{\iiv_2}))\cap l(x)= \emptyset$ or $\mathrm{cv}(f_k(z_{\iiv_2}),f_k(z_{\iiv_3}))\cap l(x)=\emptyset$ }
\end{equation}
for every $z_{\iiv_1}, z_{\iiv_2}, z_{\iiv_3}\in Z_{n-1}$ with $\iiv_1<\iiv_2<\iiv_3$.

Let us suppose that $Z_n$ is not well ordered for some $n$. Hence, there exists a minimal $n$ such that $Z_{n-1}$ is well ordered but $Z_{n}$ is not. By Lemma~\ref{lem:noturn}, for every $\delta'>\delta>0$ there exist $\jjv_1<\jjv_2<\jjv_3\in\{0,\dots,N-1\}^{n+1}$, $l'\in B_{\delta}(F(\Sigma))$ and $x\in\R^d$
$$
\mathrm{cv}(z_{\jjv_1},z_{\jjv_2})\cap l'(x)\neq \emptyset\text{ and }\mathrm{cv}(z_{\jjv_2},z_{\jjv_3})\cap l'(x)\neq\emptyset.
$$
Since \eqref{eq:fzn} holds for every $k=0,\dots,N-1$, there are $k< m$ such that $z_{\jjv_1}\in f_k(Z_{n-1})$ and $z_{\jjv_3}\in f_m(Z_{n-1})$. On the other hand by \eqref{eq:fzn}, one of the endpoints of $f_k(Z_{n-1})$ (and $f_m(Z_{n-1})$) must be on the same side of $l'(x)$, where $z_{\jjv_1}$ (and $z_{\jjv_3}$ respectively) is. Denote these endpoints by $z_{a'}$ and $z_{b'}$. Observe that $z_{a'}\neq z_{b'}$. Indeed, if $z_{a'}=z_{b'}$ then $k=m-1$, and thus either $z_{\jjv_2}\in f_k(Z_{n-1})$ or $z_{\jjv_2}\in f_m(Z_{n-1})$. Hence, but $z_{\jjv_2}$ is separated from $z_{\jjv_1},z_{\jjv_3},z_{a'},z_{b'}$ by the plane $l'(x)$, which cannot happen by \eqref{eq:fzn}.

Moreover,by \eqref{eq:fzn}, one of the endpoints of
$f_{(\jjv_2)_0}(Z_{n-1})$ is on the same side of $l'(x)$ with
$z_{\jjv_2}$, denote it by $z_{c'}$. But the endpoints of
$f_p(Z_{n-1})$ are the elements of $Z_0$, moreover, $a'<c'<b'$,
which contradicts to the well ordered property of $Z_0$.
\end{proof}

\begin{theorem}\label{thm:equi}
    Let $\mathcal{S}$ be a non-degenerate system. Then the following three statements are equivalent
    \begin{enumerate}
        \item\label{t1} $S$ satisfies Assumption A,
        \item\label{t2} for $\mathcal{L}$-a.e. $x$, $\alpha_r(x)$ exists,
        \item\label{t3} $Z_0$ satisfies the well-ordered property.
    \end{enumerate}
\end{theorem}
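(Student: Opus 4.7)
I plan to prove the cycle $(1)\Rightarrow(2)\Rightarrow(3)\Rightarrow(1)$.

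For $(1)\Rightarrow(2)$: this is essentially already contained in the proof of Theorem~\ref{thm:assA}. Specifically, combining Lemma~\ref{lem:erg} with Lemma~\ref{lem:ubassA} under Assumption~A gives
\[
\alpha_r(\pi(\ii))=\lim_{n\to\infty}\frac{\log\|A_{\ii|_n}\|}{\log\lambda_{\ii|_n}}\quad\text{for $\mu_t$-a.e.\ }\ii,\ t\in\R.
\]
Taking $t=0$, the Gibbs measure $\mu_0$ is the Bernoulli measure with weights $(\lambda_0,\dots,\lambda_{N-1})$ and $\pi_*\mu_0=\mathcal{L}|_{[0,1]}$, so $\alpha_r(x)=P'(0)$ for Lebesgue almost every $x\in[0,1]$.

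For $(3)\Rightarrow(1)$: assume $Z_0$ is well ordered. Lemma~\ref{lem:iff} then gives the same for every $Z_n$, and by Lemma~\ref{lem:noturn} there is a fixed $\delta>0$ such that for each $l\in B_\delta(F(\Sigma))$ all difference vectors $z_{\iiv_2}-z_{\iiv_1}$ with $\iiv_1<\iiv_2$ and $|\iiv_1|=|\iiv_2|$ lie in a common open half-space of $\R^d$ with normal $l$. I take $C$ to be the projectivisation in $\mathbb{PR}^{d-1}$ of the closed convex cone in $\R^d$ generated by all such differences. The half-space property makes $C$ a proper convex simply connected closed cone disjoint from $F(\Sigma)$, while the identity $A_i(z_{\iiv_2}-z_{\iiv_1})=z_{i\iiv_2}-z_{i\iiv_1}$ together with the strict version of the well-ordering and the contraction of the $A_i$ produces $\bigcup_iA_iC\subset C^o$; the remaining requirements $\langle z_N-z_0\rangle\in C^o$ and $\langle A_iv,v\rangle>0$ are immediate from the construction.

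The main step is $(2)\Rightarrow(3)$, which I prove by contrapositive. Assume $Z_0$ is not well ordered. By compactness of $F(\Sigma)$ (via the continuity of $F$ from Theorem~\ref{thm:BochiGour}) and pigeonhole on the finitely many triples $k_1<k_2<k_3\in\{0,\dots,N\}$, I extract a subspace $l^*=F(\iota^*)\in F(\Sigma)$ and a unit vector $e^*\in(l^*)^\perp$ such that $\langle z_{k_2}-z_{k_1},e^*\rangle$ and $\langle z_{k_3}-z_{k_2},e^*\rangle$ have strictly opposite signs. Applying the intermediate value theorem to $t\mapsto\langle v(t),e^*\rangle$ on the sub-arc of $\Gamma$ between $z_{k_1}$ and $z_{k_3}$ then produces two distinct points $A=\Pi(\iota_A)$, $B=\Pi(\iota_B)$ on $\Gamma$ with $A-B\in l^*=F(\iota^*)$. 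By ergodicity of the Bernoulli measure $\mu_0$ applied to a suitable window cylinder, for $\mu_0$-a.e.\ $\ii$ there are infinitely many $n$ at which the reversed prefix $i_n\,i_{n-1}\cdots i_1$ agrees with a long initial block of $\iota^*$ \emph{and} $\sigma^n\ii$ agrees with a long initial block of $\iota_A$. For such $n$, Theorem~\ref{thm:BochiGour}\eqref{item:bg6} gives $\|A_{\ii|_n}|F(\iota^*)\|\leq C\alpha_2(A_{\ii|_n})$, hence
\[
\|\Pi(\ii)-\Pi(\ii|_n\iota_B)\|\leq\|A_{\ii|_n}\|\cdot\|\Pi(\sigma^n\ii)-A\|+\|A_{\ii|_n}(A-B)\|\leq C\|A_{\ii|_n}\|\eta_n+C\alpha_2(A_{\ii|_n}),
\]
where $\eta_n\to0$ is governed by the matching prefix length. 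Choosing that length so that $\|A_{\ii|_n}\|\eta_n=o(\alpha_2(A_{\ii|_n}))$ (possible by the geometric decay in Theorem~\ref{thm:BochiGour}\eqref{item:bg4}) and using $|\pi(\ii)-\pi(\ii|_n\iota_B)|\asymp\lambda_{\ii|_n}$, along this subsequence one gets
\[
\frac{\log\|\Pi(\ii)-\Pi(\ii|_n\iota_B)\|}{\log|\pi(\ii)-\pi(\ii|_n\iota_B)|}\geq\frac{\log\alpha_2(A_{\ii|_n})}{\log\lambda_{\ii|_n}}+o(1)>\frac{\log\|A_{\ii|_n}\|}{\log\lambda_{\ii|_n}}+c
\]
for some fixed $c>0$. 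Since the generic symbolic approximation of $\pi(\ii)$ produces the right-hand ratio (via Lemma~\ref{lem:erg}), the $\liminf$ and $\limsup$ defining $\alpha_r(\pi(\ii))$ strictly differ on a $\mu_0$-full-measure set, i.e.\ on a set of positive Lebesgue measure in $[0,1]$, contradicting~(2).

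The main obstacle is precisely the coupling just described: one must simultaneously control the reversed prefix of $\ii$ (so Theorem~\ref{thm:BochiGour}\eqref{item:bg6} can be invoked to contract the fold direction $F(\iota^*)$) and the forward tail of $\ii$ (so that $\sigma^n\ii$ lands near the symbolic address $\iota_A$ of the fold endpoint), and to balance the residual expansion error $\|A_{\ii|_n}\|\eta_n$ against the transverse gain $\alpha_2(A_{\ii|_n})$ using the exponential decay of $\alpha_2/\|A\|$ supplied by Theorem~\ref{thm:BochiGour}\eqref{item:bg4}.
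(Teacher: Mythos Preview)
Your implications $(1)\Rightarrow(2)$ and $(3)\Rightarrow(1)$ are essentially the paper's arguments (for the latter, the paper takes $C$ to be a connected component of $\mathbb{PR}^{d-1}\setminus\widetilde{F(\Sigma)}$ rather than a convex hull, but the two constructions are morally the same; your claim that $\langle A_iv,v\rangle>0$ is ``immediate'' is a bit quick but not the main issue).

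The real gap is in $(2)\Rightarrow(3)$, specifically the error balancing you describe at the end. You fix a finite matching length $L$ so that $\sigma^n\ii$ agrees with $\iota_A$ on its first $L$ symbols, and then invoke ergodicity on the \emph{fixed} cylinder encoding the two conditions. With $L$ fixed, $\eta_n=\|\Pi(\sigma^n\ii)-A\|$ is bounded below by a positive constant depending only on $L$, whereas $\alpha_2(A_{\ii|_n})/\|A_{\ii|_n}\|\leq C\tau^n\to0$. Hence in your estimate
\[
\|\Pi(\ii)-\Pi(\ii|_n\iota_B)\|\leq C\|A_{\ii|_n}\|\eta_n+C\alpha_2(A_{\ii|_n})
\]
the \emph{first} term dominates for all large $n$, and you recover only
\[
\frac{\log\|\Pi(\ii)-\Pi(\ii|_n\iota_B)\|}{\log|\pi(\ii)-\pi(\ii|_n\iota_B)|}\geq\frac{\log\|A_{\ii|_n}\|}{\log\lambda_{\ii|_n}}+o(1),
\]
i.e.\ no strict excess $c>0$. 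To make $\|A_{\ii|_n}\|\eta_n=o(\alpha_2(A_{\ii|_n}))$ you would need $L=L(n)$ to grow linearly in $n$, but then the target cylinder has measure decaying to $0$ and the plain ergodic-recurrence argument no longer applies.

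The paper sidesteps this entirely by applying the intermediate value theorem \emph{inside the loop} rather than once at the outset. It first establishes the open condition that for every $\ii'\in[\ii|_n]$ and every $\mathbf{k}'\in[\mathbf{k}|_m]$ the hyperplane $F(\mathbf{k}')(\Pi(\ii'))$ still meets $\Gamma_{j_1j_2}$. Then, at each ergodic return time $p_k$, it picks a \emph{fresh} point $\jj_k$ on $\Gamma$ with
\[
\Pi(\sigma^{p_k+m}\jj_k)-\Pi(\sigma^{p_k+m}\ii)\in F(\mathbf{k}_k)\quad\text{exactly},
\]
so there is no residual error term at all, and Theorem~\ref{thm:BochiGour}\eqref{item:bg6} gives directly
\[
\|A_{\ii|_{p_k+m}}(\Pi(\sigma^{p_k+m}\ii)-\Pi(\sigma^{p_k+m}\jj_k))\|\leq C\alpha_2(A_{\ii|_{p_k+m}}).
\]
That is the missing idea: do not fix the comparison point $\iota_B$ in advance and then try to approximate; instead, use continuity to guarantee an exact fold point at every scale and let it vary with $k$. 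Once you make this change, the contradiction $\alpha_r(\pi(\ii))\geq\alpha(\pi(\ii))-\log\tau/\chi_{\mu_0}$ follows in a single line.
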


\begin{proof}[Proof of Theorem~\ref{thm:equi}\eqref{t1}$\Rightarrow$Theorem~\ref{thm:equi}\eqref{t2}]
    Follows directly by the combination of Lemma~\ref{lem:erg} and Lemma~\ref{lem:ubassA}.
\end{proof}

\begin{proof}[Proof of Theorem~\ref{thm:equi}\eqref{t2}$\Rightarrow$Theorem~\ref{thm:equi}\eqref{t3}]
Let us argue by contradiction. Assume that $\alpha_r(x)$ exists for
$\mathcal{L}$-a.e. $x$ but there exists $Z_n$, $n\geq0$, which does
not satisfy the well-ordered property. By Lemma~\ref{lem:iff}, $Z_0$
does not satisfy the well ordered property. By
Lemma~\ref{lem:noturn}, let $l\in F(\Sigma)$, $x\in\R^d$ and
$z_{i-1},z_i,z_{i+1}\in Z_0$ be such that
$\mathrm{cv}(z_{i-1},z_{i})\cap l(x)\neq \emptyset$ and
$\mathrm{cv}(z_{i},z_{i+1})\cap l(x)\neq\emptyset$. By continuity of
the curve $\Gamma$, there exist $\ii,\jj\in\Sigma$ such that
$i_1=i-1\neq i=j_1$, $j_2\neq 0$ and $\Pi(\ii)-\Pi(\jj)\in l(x')$
with some $x'\in\R^d$. Hence,
$\langle\Pi(\ii)-\Pi(\jj)\rangle\subset l$. By definition, there
exists a $\mathbf{k}\in\Sigma$ such that $F(\mathbf{k})=l$. By using
the continuity of $F\colon\Sigma\mapsto G(d,d-1)$, $\Gamma$ and
$\Pi\colon\Sigma\mapsto\Gamma$, one can choose $n,m$ sufficiently
large, such that for every $\ii'\in[\ii|_n]$ and
$\mathbf{k}'\in[\mathbf{k}|_m]$,  \begin{equation}\label{eq:usethis}
F(\mathbf{k}')(\Pi(\ii'))\cap\Gamma_{j_1j_2}\neq\emptyset.
    \end{equation}

By ergodicity, for $\mu_0$-a.e. $\ii$, $\sigma^p\ii\in[k_m,\dots,k_1,i_1,\dots,i_n]$ for infinitely many $p\geq0$, where $\mathbf{k}|_m=k_1,\ldots,k_m$. Let us denote this subsequence by $p_k$.  Let $\mathbf{k}_k$ be the sequence such that $\mathbf{k}_k\in[k_1,\ldots,k_m,i_{p_k},\ldots,i_1]$.

By \eqref{eq:usethis}, there exists a sequence $\{\jj_{k}\}$ such that $\jj_k\wedge\ii=p_k+m$, $\sigma^{p_k+m}\jj_k\in[j_1j_2]$, and $\Pi(\sigma^{p_k+m}\jj_k)-\Pi(\sigma^{p_k+m}\ii)\in F(\mathbf{k}_k)$. By construction, $\sigma^{p_k+m}\jj_k\wedge\sigma^{p_k+m}\ii=0$ and $\sigma^{p_k+m}\jj_k\vee\sigma^{p_k+m}\ii=0$, and hence there exists a constant $c>0$ such that \newline$\|\Pi(\sigma^{p_k+m}\jj_k)-\Pi(\sigma^{p_k+m}\ii)\|>c$. Therefore for $\mu_0$-a.e. $\ii$
\begin{align*}
  \alpha_r(\pi(\ii))&=\lim_{\pi(\jj)\to\pi(\ii)}\frac{\log\|\Pi(\ii)-\Pi(\jj)\|}{\log|\pi(\ii)-\pi(\jj)|}=\lim_{k\to+\infty}\frac{\log\|\Pi(\ii)-\Pi(\jj_k)\|}{\log|\pi(\ii)-\pi(\jj_k)|}\\
  &=\lim_{k\to+\infty}\frac{\log\|A_{\ii|_{p_k+m}}(\Pi(\sigma^{p_k+m}\ii)-\Pi(\sigma^{p_k+m}\jj_k))\|}{\log|\lambda_{\ii|_{p_k+m}}(\pi(\sigma^{p_k+m}\ii)-\pi(\sigma^{p_k+m}\jj_k))|}\\
  &\geq\lim_{k\to+\infty}\frac{\log\|A_{\ii|_{p_k+m}}|F(\mathbf{k}_k)\|}{\log\lambda_{\ii|_{p_k+m}}}\geq\lim_{k\to+\infty}\frac{\log\alpha_2(A_{\ii|_{p_k+m}})}{\log\lambda_{\ii|_{p_k+m}}}\\
  &\geq\frac{\log\tau}{-\chi_{\mu_0}}+\lim_{k\to+\infty}\frac{\log\|A_{\ii|_{p_k+m}}\|}{\log\lambda_{\ii|_{p_k+m}}}=\frac{\log\tau}{-\chi_{\mu_0}}+\alpha(\pi(\ii)),
\end{align*}
\eqref{eq:GibbsLyap} where
Theorem~\ref{thm:BochiGour}\eqref{item:bg4},
Theorem~\ref{thm:BochiGour}\eqref{item:bg6} and Lemma~\ref{lem:erg}.
But $-\log\tau/\chi_{\mu_0}>0$, which is a contradiction.
\end{proof}

Let us recall that for any $\underline{0}\neq x\in\R^d$, $\langle v\rangle$ denotes the unique $1$-dimensional subspace in $\mathbb{PR}^{d-1}$ such that $v\in\langle v\rangle$. Also, any $V\in G(d,d-1)$ can be identified with a $d-2$ dimensional, closed submanifold $\tilde{V}$ of $\mathbb{PR}^{d-1}$ such that $\tilde{V}=\{\theta\in\mathbb{PR}^{d-1}:\theta\subset V\}$. Also, for a subset $B\subset G(d,d-1)$ we can identify it with a subset $\tilde{B}$ of $\mathbb{PR}^{d-1}$ such that $\tilde{B}=\{\theta\in\mathbb{PR}^{d-1}:\theta\subset V\in B\}$.

\begin{proof}[Proof of Theorem~\ref{thm:equi}\eqref{t3}$\Rightarrow$Theorem~\ref{thm:equi}\eqref{t1}]
Suppose that $Z_0$ satisfies the well ordered property. By
Lemma~\ref{lem:iff}, $Z_n$ satisfies the well-ordered property for
every $n\geq0$ and thus, we may assume that $\langle
z_{\iiv}-z_{\jjv}\rangle\notin\widetilde{F(\Sigma)}$ for every
$z_{\iiv},z_{\jjv}\in Z_n$. Indeed, if $\langle
z_{\iiv}-z_{\jjv}\rangle\in\widetilde{F(\ii)}$ for some
$\ii\in\Sigma$ then one could find
$z_{\iiv_1'},z_{\iiv_2'},z_{\iiv_3'}\in Z_{n+1}$ such that
$\iiv_1'<\iiv_2'<\iiv_3'$, $z_{\iiv_1'}=z_{\iiv}$,
$z_{\iiv_3'}=z_{\jjv}$ and
$$\proj_{F(\ii)^{\perp}}(z_{\iiv_2'})\notin\mathrm{cv}(\proj_{F(\ii)^{\perp}}(z_{\iiv_1'}),\proj_{F(\ii)^{\perp}}(z_{\iiv_3'})).$$

So, for every $\iiv,\jjv\in\Sigma^*$ there exists open, connected component $C_{\iiv,\jjv}$  of $\mathbb{PR}^{d-1}\setminus\widetilde{F(\Sigma)}$ such that $\langle z_{\iiv}-z_{\jjv}\rangle\in C_{\iiv,\jjv}$.

Then for any $\iiv_1<\iiv_2<\iiv_3$, $C_{\iiv_1,\iiv_2}=C_{\iiv_1,\iiv_3}=C_{\iiv_2,\iiv_3}$. Indeed, if $C_{\iiv_1,\iiv_2}\neq C_{\iiv_1,\iiv_3}$ then there exists $\widetilde{F(\ii)}$, which separates $\langle z_{\iiv_2}-z_{\iiv_1}\rangle$ and $\langle z_{\iiv_3}-z_{\iiv_1}\rangle$. But then, for $F(\ii)^{\perp}$, $\proj_{F(\ii)^{\perp}}(z_{\iiv_2})\notin\mathrm{cv}(\proj_{F(\ii)^{\perp}}(z_{\iiv_1}),\proj_{F(\ii)^{\perp}}(z_{\iiv_3}))$, which cannot happen by definition of well ordered property.

Therefore, there exists a unique open,connected component $C$ such that $\langle z_{\iiv}-z_{\jjv}\rangle\in C$ for every $\iiv,\jjv\in\Sigma^*$. But, for any $\ii\in\Sigma$, since $\langle z_N-z_0\rangle\notin\widetilde{F(\Sigma)}$
$$
\lim_{n\to+\infty}\langle A_{\ii|_n}(z_N-z_0)\rangle= E(\ii),
$$
hence, $E(\Sigma)\subset C$. Thus, for any multicone $M$, for which the dominated splitting condition of index-1 holds, the cone $M\cap C$ is invariant, i.e. $A_i (M\cap C)\subset M^o\cap C$.

On the other hand, by $\langle z_N-z_0\rangle\in C$, one can extend $M\cap C$ such that $\langle z_N-z_0\rangle\in M\cap C$ and $M\cap C$ remains invariant.
\end{proof}

\section{An example, de Rham's curve}\label{Sec:Ex_deRham}

In this last section of the paper, we show an application for our
main theorems. The well-known de Rham's curve in $\R^2$ is the
attractor of the affine zipper, formed by the functions
\begin{equation}\label{eq:defdeRham}
f_0(x)=\begin{bmatrix}
\omega & 0 \\
\omega & 1-2\omega \\
\end{bmatrix} x-\begin{bmatrix} 0 \\ 2\omega \\ \end{bmatrix}
\;\;\text{ and }\;\;
f_1(x)=\begin{bmatrix}
1-2\omega & \omega \\
0 & \omega \\
\end{bmatrix} x+\begin{bmatrix} 2\omega \\ 0 \\ \end{bmatrix},
\end{equation}
where $\omega\in(0,1/2)$ is a parameter.

Originally, the curve was introduced and studied by de Rham
\cite{deRham1, deRham2, deRham3} with a geometric construction.
Starting from a square, it can be obtained by trisecting each side
with ratios $\omega:(1-2\omega):\omega$ and "cutting the corners" by
connecting each adjacent partitioning point to get an octagon.
Again, each side is divided into three parts with the same ratio and
adjacent partitioning points are connected, and so on. The de Rham
curve is the limit curve of this procedure. More precisely, the
curve defined by the zipper in \eqref{eq:defdeRham} gives the
segment between two midpoints of the original square.

Let us define the following linear parametrisation of the curve. Let $v\colon[0,1]\mapsto\R^2$ the function of the form
\begin{equation}\label{eq:defdeRham2}
v(x)=f_i(v(2x-i)), \text{ for } x\in \Big[\frac{i}{2}, \frac{i+1}{2}\Big),\; i=0,1.
\end{equation}

For a visualisation of a linearly parametrized de Rham curve, 
see Figure~\ref{fig:derham}. 

\begin{figure}[!htb]
\minipage{0.32\textwidth}
  \includegraphics[width=\linewidth]{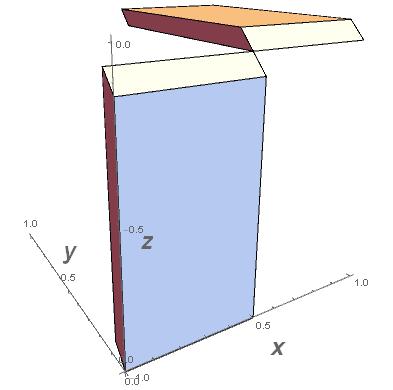}
\endminipage\hfill
\minipage{0.32\textwidth}
  \includegraphics[width=\linewidth]{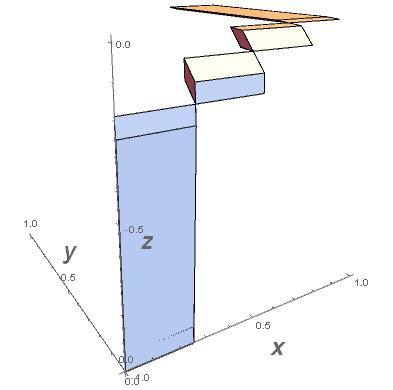}
\endminipage\hfill
\minipage{0.32\textwidth}%
  \includegraphics[width=\linewidth]{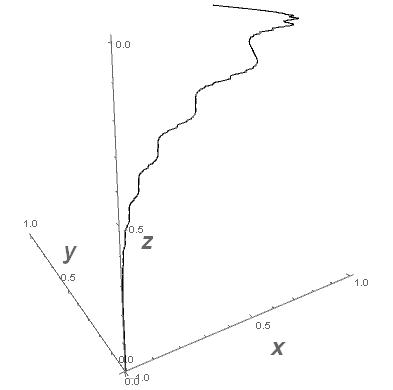}
\endminipage
\caption{Linearly parametrized de Rham curve with parameter $\omega=1/10$. Left: The image of the unit cube w.r.t the IFS generating the graph of the de Rham curve. Middle: The second iteration. Right: The curve itself.}\label{fig:derham}
\end{figure}

Protasov \cite{Protasov04, Protasov06} proved in a more general context that the set of points $x\in[0,1]$ for which $\alpha(x)=\beta$ has full measure only if $\beta=\widehat{\alpha}$, otherwise it has zero measure. Just recently, Okamura \cite{Okamura} bounds $\alpha(x)$ for Lebesgue typical points allowing in the definition \eqref{eq:defdeRham2} more than two functions and also non-linear functions under some conditions.

We show that with a suitable coordinate transform the matrices $A_0$ and $A_1$ satisfy Assumption A and hence, our results are applicable.

\begin{lemma}\label{lem:pozdeRham}
    For every $\omega\in(0,1/3)\cup(1/3,1/2)$ there exists a coordinate transform $D(\omega)$ such that $D(\omega)^{-1}A_iD(\omega)$ has strictly positive entries for $i=1,2$.
\end{lemma}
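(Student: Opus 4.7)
The plan is to reduce the statement to the problem of finding, for each admissible $\omega$, a common strictly invariant convex cone for $\{A_0,A_1\}$ in $\mathbb{R}^2$: if $C$ is the open cone generated by linearly independent vectors $u,w$ and $A_iu,A_iw\in C^o$ for $i=0,1$, then taking $D(\omega):=(u\mid w)$ makes the columns of $D(\omega)^{-1}A_iD(\omega)$ the $\{u,w\}$-coordinates of $A_iu,A_iw$, which are strictly positive by hypothesis.

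A key simplification is the symmetry $A_1=PA_0P$ with $P=\begin{pmatrix}0&1\\1&0\end{pmatrix}$: if $D(\omega)$ is chosen to satisfy $PDP=D$, then $D^{-1}A_1D=P(D^{-1}A_0D)P$ is just a row-and-column permutation of $D^{-1}A_0D$, so verifying positivity for $A_0$ automatically gives it for $A_1$. I would also extract the eigenstructure first: both matrices have eigenvalues $\omega$ and $1-2\omega$ (which coincide exactly at $\omega=1/3$, explaining the exclusion), and the eigenvectors of $A_0$ are $(3\omega-1,\omega)^T$ for $\omega$ and $(0,1)^T$ for $1-2\omega$, with the eigenvectors of $A_1$ obtained by applying $P$.

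For $\omega\in(1/3,1/2)$ both dominant eigenvectors (those corresponding to $\omega$) lie strictly inside the first quadrant, so I would try $D(\omega)=\begin{pmatrix}1&c\\c&1\end{pmatrix}$ with $0<c<(3\omega-1)/\omega$. Passing to slopes, the induced projective actions are $A_0\colon s\mapsto 1+\frac{1-2\omega}{\omega}s$ and $A_1\colon s\mapsto\frac{\omega s}{1-2\omega+\omega s}$; these are orientation-preserving contractions of $(0,\infty)$ with attracting fixed slopes $\omega/(3\omega-1)$ and $(3\omega-1)/\omega$, reciprocals of one another. A short verification shows that each sends $[c,1/c]$ strictly inside itself, which is exactly the strict invariance of the cone spanned by $(1,c)^T$ and $(c,1)^T$; expanding $D^{-1}A_0D$ then produces four strictly positive entries, and the $P$-symmetry handles $A_1$.

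For $\omega\in(0,1/3)$ the dominant eigenvectors are the coordinate axes $(0,1)^T,(1,0)^T$, so the first quadrant is invariant but not strictly contracted, and the invariant cone must extend beyond it. I would take $D(\omega)=\begin{pmatrix}-\varepsilon&1\\1&-\varepsilon\end{pmatrix}$ for small $\varepsilon>0$, which still satisfies $PDP=D$; a direct computation of $D^{-1}A_0D$ gives, up to the positive prefactor $1/(1-\varepsilon^2)$, the entries
\[
1-2\omega-\omega\varepsilon(1+\varepsilon),\quad\omega-\varepsilon(1-3\omega),\quad\varepsilon(1-3\omega-\omega\varepsilon),\quad\omega+\omega\varepsilon-(1-2\omega)\varepsilon^2,
\]
each strictly positive once $0<\varepsilon<\min\{\omega/(1-3\omega),1\}$, precisely because $\omega<1/3$; the $P$-symmetry then transfers positivity to $D^{-1}A_1D$. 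The only conceptually subtle point is recognising this second case: for $\omega<1/3$ the two dominant eigendirections are perpendicular and axis-aligned, so any strictly invariant cone has to contain the closed first quadrant in its interior, which forces the diagonal of $D(\omega)$ to become negative.
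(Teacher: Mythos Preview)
Your approach is essentially the paper's own: the conjugating matrices you use are, up to a harmless column swap in the second case, exactly the paper's $\widetilde{D}_{\varepsilon}=\begin{pmatrix}1&\varepsilon\\ \varepsilon&1\end{pmatrix}$ for $\omega\in(1/3,1/2)$ and $\widehat{D}_{\delta}=\begin{pmatrix}1&-\delta\\ -\delta&1\end{pmatrix}$ for $\omega\in(0,1/3)$. The extra structure you exploit --- the symmetry $A_1=PA_0P$ reducing the check to a single matrix, and the projective-dynamical picture in the first regime --- is a pleasant enrichment of what the paper records only as ``elementary calculations''.

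There is, however, one concrete slip in the second case. Your displayed entries are correct, but the positivity of the $(2,1)$ entry $\varepsilon(1-3\omega-\omega\varepsilon)$ requires $\varepsilon<(1-3\omega)/\omega$, which you omit from your stated bound $\varepsilon<\min\{\omega/(1-3\omega),1\}$. For $\omega\in(1/4,1/3)$ one has $(1-3\omega)/\omega<1<\omega/(1-3\omega)$, so your condition is genuinely too weak there (e.g.\ $\omega=0.3$, $\varepsilon=0.5$ satisfies your bound but makes the $(2,1)$ entry negative). The fix is simply to replace your condition by $0<\varepsilon<\min\{\omega/(1-3\omega),(1-3\omega)/\omega\}$, which is precisely what the paper's inequalities $\delta/(1+3\delta)<\omega<1/(3+\delta)$ encode; with that correction your argument is complete.
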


\begin{proof}
    For $\varepsilon>0$ and $0<\delta<1$ define the coordinate transform matrices
    \[ \widetilde{D}_{\varepsilon}=\begin{bmatrix} 1 & \varepsilon \\ \varepsilon & 1 \\ \end{bmatrix} \;\text{ and }\; \widehat{D}_{\delta}=\begin{bmatrix} 1 & -\delta \\ -\delta & 1 \\ \end{bmatrix}. \]
    Elementary calculations show that the matrices $\widetilde{A}_0=\widetilde{D}_\varepsilon^{-1}A_0\widetilde{D}_\varepsilon$ and $\widetilde{A}_1=\widetilde{D}_\varepsilon^{-1}A_1\widetilde{D}_\varepsilon$ have strictly positive entries whenever
    \[ \frac{1}{3-\varepsilon}<\omega<\frac{1}{2-\varepsilon-\varepsilon^2}. \]
    The largest possible interval $(1/3,1/2)$ is attained when $\varepsilon$ is arbitrarily small. Very similar calculations show that the entries of $\widehat{A}_0=\widehat{D}_\delta^{-1}A_0\widehat{D}_\delta$ and $\widehat{A}_1=\widehat{D}_\delta^{-1}A_1\widehat{D}_\delta$ are strictly positive whenever
    \[ \frac{\delta}{1+3\delta}<\omega<\frac{1}{3+\delta}, \]
    which gives the open interval $(0,1/3)$. Also trivial calculations show that $\|\widetilde{A}_i\|_1=\|A_i\|_1=\|\widehat{A}_i\|_1$, $i=0,1$.
\end{proof}

Let us recall that in this case $P(t)$ has the form
$$
P(t)=\lim_{n\to+\infty}\frac{-1}{n\log2}\log\sum_{|\iiv|=n}\|A_{\iiv}\|^t,
$$
and
$$
\alpha_{\min}=\lim_{t\to+\infty}\frac{P(t)}{t}\text{ and }\alpha_{\max}=\lim_{t\to-\infty}\frac{P(t)}{t}.
$$

\begin{prop}
    For every $\omega\in(0,1/4)\cup(1/4,1/3)\cup(1/3,1/2)$ the de Rham function $v:[0,1]\mapsto\R^2$, defined in \eqref{eq:defdeRham2}, the following are true
    \begin{enumerate}
        \item\label{item:dr1} $v$ is differentiable for Lebesgue-almost every $x\in[0,1]$ with derivative vector equal to zero,
        \item\label{item:dr2} Let $\mathcal{N}$ be the set of $[0,1]$ such that $v$ is not differentiable. Then $\dim_H\mathcal{N}=\tau-P(\tau)>0$, where $\tau\in\R$ is chosen such that $P'(\tau)=1$,
        \item\label{item:dr3} $\dim_H\Pi_*\mu_0<1$, where $\mu_0=\left\{\frac{1}{2},\frac{1}{2}\right\}^{\mathbb{N}}$ equidistributed measure on $\Sigma=\left\{0,1\right\}^{\mathbb{N}}$ and $\Pi$ is the natural projection from $\Sigma$ to $v([0,1])$.
        \item\label{item:dr4} for every $\beta\in[\alpha_{\min},\alpha_{\max}]$
        \begin{align*}
        \dim_H\left\{x\in[0,1]:\alpha(x)=\beta\right\}&=\dim_H\left\{x\in[0,1]:\alpha_r(x)=\beta\right\}\\
        &=\inf_{t\in\R}\{t\beta-P(t)\}.
        \end{align*}
    \end{enumerate}
\end{prop}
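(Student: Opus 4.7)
The plan is to reduce the four claims to the general machinery of Sections~\ref{sec:ds}--\ref{sec:AssAZippers}, after verifying the prerequisites for de Rham's zipper.

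I would first establish the hypotheses of Theorems~\ref{thm:assA} and \ref{thm:equiv}. The system $\mathcal{S} = \{f_0,f_1\}$ is non-degenerate because the three vertices $z_0, z_1, z_2$ are not collinear and the curve $v$ is simple. For Assumption~A, observe that Lemma~\ref{lem:pozdeRham} produces, for every $\omega\in(0,1/3)\cup(1/3,1/2)$, an explicit coordinate transformation $D(\omega)$ for which $D(\omega)^{-1}A_iD(\omega)$ has strictly positive entries; a short computation shows that $D(\omega)^{-1}(z_2-z_0)$ also has strictly positive entries. Consequently in the new coordinates the positive cone $C\subset\mathbb{PR}^1$ satisfies $\bigcup_{i=0,1}A_iC\subset C^o$, $\langle A_iv,v\rangle>0$ for every $\underline{0}\neq v\in\theta\in C$, and $\langle z_2-z_0\rangle\in C^o$, which is Assumption~A. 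The symmetry relation \eqref{esymm} also holds: $\lambda_0=\lambda_1=1/2$, and because $A_1$ is the conjugate of $A_0$ by the coordinate swap $(x,y)\mapsto(y,x)$, one has $\|A_0^k\|=\|A_1^k\|$ for every $k$.

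Given these, Theorem~\ref{thm:assA} together with the symmetry immediately yields claim (4) for the full spectrum $\beta\in[\alpha_{\min},\alpha_{\max}]$, and Theorem~\ref{thm:equiv} yields the existence of the regular H\"older exponent with $\alpha_r(x)=P'(0)=\widehat\alpha$ at Lebesgue-almost every $x$. The three remaining claims all rest on the strict inequality $\widehat\alpha>1$, which is precisely the point where $\omega=1/4$ has to be excluded: at that value de Rham's curve is a parabolic arc, hence $\widehat\alpha=1$ and claims (1)--(3) all fail. For the other parameters $\widehat\alpha>1$ must be obtained either by a direct Lyapunov-exponent estimate for the explicit matrices~\eqref{eq:defdeRham}, or by invoking Protasov's bounds on $\alpha(x)$ from \cite{Protasov06}; establishing this inequality is the principal obstacle.

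Assuming $\widehat\alpha>1$, the remaining claims are short consequences. For claim (1), $\alpha_r(x)=\widehat\alpha>1$ forces $\|v(y)-v(x)\|/|y-x|\to 0$, so $v$ is differentiable at $x$ with derivative vector $\underline{0}$. For claim (3), the local-dimension identity $\dim_H\Pi_*\mu_0=1/\widehat\alpha$, which follows from $\pi_*\mu_0=\mathcal{L}|_{[0,1]}$ and $\alpha_r(\pi(\ii))=\widehat\alpha$ for $\mu_0$-a.e.\ $\ii$, gives $\dim_H\Pi_*\mu_0<1$. For claim (2), note that $\alpha(x)>1$ implies differentiability with zero derivative and $\alpha(x)<1$ implies non-differentiability, hence $\{\alpha<1\}\subseteq\mathcal{N}\subseteq\{\alpha\leq 1\}$. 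Continuity and concavity of $\beta\mapsto\dim_H E(\beta)$ (Theorem~\ref{thm:assA}) imply both sandwich sets have Hausdorff dimension $\inf_t\{t-P(t)\}=\tau-P(\tau)$, where $\tau$ is the unique real number with $P'(\tau)=1$; its existence uses $P'(0)=\widehat\alpha>1$ and monotonicity of $P'$. The positivity $\tau-P(\tau)>0$ follows from the identity $\tau-P(\tau)=\dim_H\mu_\tau$ of Proposition~\ref{prop:pressure} applied to the non-trivial ergodic Gibbs measure $\mu_\tau$.
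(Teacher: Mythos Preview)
Your overall structure is right and matches the paper's: verify Assumption~A and symmetry, invoke Theorems~\ref{thm:assA} and~\ref{thm:equiv} for item~(4) and for $\alpha_r(x)=P'(0)$ a.e., then derive items~(1)--(3) from the single inequality $P'(0)>1$. The gap is that you correctly isolate $P'(0)>1$ as ``the principal obstacle'' but do not prove it; you defer to an unspecified Lyapunov-exponent computation or to Protasov. The paper supplies exactly this missing ingredient, and it is short: the matrix $A_0+A_1$ is column-stochastic, so $\sum_{|\iiv|=n}\|A_{\iiv}\|_1\asymp 1$ and hence $P(1)=0$, i.e.\ $d_0=1$. One then computes the Gibbs measure $\mu_1([\iiv])=p^TA_{\iiv}e$ explicitly and checks $\mu_1([00])\neq 1/4=\mu_0([00])$ precisely when $\omega\neq 1/4$. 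Lemma~\ref{lem:boundsonder} (with $d_0=1$) then yields \emph{both} $P'(0)>1$ and $P'(1)<1$ simultaneously.

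This also repairs a second, related gap in your argument for item~(2): from $P'(0)>1$ and monotonicity of $P'$ alone you cannot conclude that $P'(\tau)=1$ has a solution; you need $P'$ to drop below $1$ somewhere, and the paper gets $P'(1)<1$ for free from the same stochasticity observation. Finally, for item~(3) you assert the \emph{equality} $\dim_H\Pi_*\mu_0=1/\widehat\alpha$, which is stronger than needed and not justified; the paper (and your conclusion) only requires the upper bound, obtained by covering $B_r(\Pi(\ii))\supseteq\Pi([\ii|_n])$ for the $n$ with $\|A_{\ii|_n}\|\leq r<\|A_{\ii|_{n-1}}\|$ and applying Young's local-dimension formula.
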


For $\omega=1/4$ the de Rham curve is a smooth curve, namely a parabola arc. For $\omega=1/3$, the matrices does not satisfy the dominated splitting condition. For this case, we refer to the work of Nikitin \cite{Nikitin}.

\begin{proof}
By Lemma~\ref{lem:pozdeRham}, we are able to apply
Theorem~\ref{thm:assA} and Theorem~\ref{thm:equiv} for
$\omega\neq1/3$. It is easy to see that \eqref{eq:defdeRham} is
symmetric. Thus, by Theorem~\ref{thm:assA}, the statement
\eqref{item:dr4} of the proposition follows.

On the other hand, let $\mathcal{N}$ be the set, where $v$ is not differentiable. Then
$$
\{x\in[0,1]:\alpha(x)<1\}\subseteq\mathcal{N}\subseteq\{x\in[0,1]:\alpha(x)\leq1\}.
$$
Thus, $\dim_H\mathcal{N}=\inf_{t\in\R}\{t-P(t)\}$.

    Now, we prove that there exists $\tau\in\R$ such that $P'(\tau)=1$. Observe that $M=A_0+A_1$ is a stochastic matrix with left and right eigenvectors $p=(p_1,p_2)^T$ and $e=(1,1)^T$, respectively,
    corresponding to eigenvalue $1$ and $p_i>0$, $p_1+p_2=1$. There exists a constant $c>0$ such that for every $\iiv\in\Sigma^*$
    $$
    c^{-1}p^TA_{\iiv}e\leq\|A_{\iiv}\|\leq cp^TA_{\iiv}e,
    $$
    and therefore
    $$
    \sum_{|\iiv|=n}p^TA_{\iiv}e=p^T(A_0+A_1)^ne.
    $$
    Thus $P(1)=0$, and
    \begin{eqnarray*}
    &&\mu_1([{\bf i}|_n])=p^T A_{{\bf i}|_n} e= p^TA_{i_1}\ldots A_{i_n}e \text{ and }\\
    &&\mu_0([{\bf i}|_n])=\frac{1}{2^n}\text{ for every } {\bf i}\in \Sigma.
    \end{eqnarray*}

    Simple calculations show that,
    \[
    \mu_1([00])=(\frac{1}{2},\frac{1}{2})A_0A_0(1,1)^T=\omega^2+\frac{(1-2\omega)\omega}{2}+\frac{(1-2\omega)^2}{2},
    \]
    which is not equal to $1/4$ if $\omega\neq1/4$ or $\omega\neq1/2$. Thus, for $\omega\neq1/4$ and $\omega\neq1/2$, $\mu_0\neq\mu_1$, and by Lemma~\ref{lem:boundsonder}, $P'(1)<1<P'(0)$. Since $t\mapsto P'(t)$ is continuous, there exists $\tau$ such that $P'(\tau)=1$ and therefore  $\dim_H\mathcal{N}=\tau-P(\tau)>0$, which completes \eqref{item:dr2}.

    On the other hand, by Theorem~\ref{thm:equiv}, $\alpha_r(x)=P'(0)>1$ for Lebesgue almost every $x\in[0,1]$ and therefore $v$ is differentiable with derivative vector $\underline{0}$. This implies \eqref{item:dr1}.

    Finally, we show statement \eqref{item:dr3} of the proposition. By using the classical result of Young
    $$\dim_H\Pi_*\mu_0=\liminf_{r\to 0+}\frac{\log\Pi_*\mu_0(B_r(x))}{\log r}\text{ for }\pi_*\mu_0\text{-a.e. }x\in\Gamma=v([0,1]).$$

    For an $\ii\in\Sigma$ and $r\in\R$ let $n\geq1$ be such that $\|A_{{\bf i}|_n}\|\leq r<\|A_{{\bf i}|_{n-1}}\|$. Hence, $\Pi([\ii|_n])\subseteq B_r(\Pi(\ii))$ and
    \[
    \liminf_{r\to 0+}\frac{\log\Pi_*\mu_0(B_r(\Pi(\ii)))}{\log r}\leq\liminf_{n\to \infty}\frac{\log\mathbb{P}([{\bf i}|_n])}{\log \|A_{{\bf i}|_n}\|}
    \]
    Since $\mu_0([{\bf i}|n])=1/2^n$, by Proposition~\ref{prop:pressure}
    $$\liminf_{n\to \infty}\frac{\log\mathbb{P}([{\bf i}|_n])}{\log \|A_{{\bf i}|_n}\|}=\frac{1}{P'(0)}<1.$$
    \end{proof}

\begin{remark}\label{rem:final}
    Finally, we remark that in case of general signature vector, one may modify the definition of $\ii\vee\jj$ to
\begin{equation*} 
\ii\vee\jj =
\left\{
\begin{array}{ll}
\min \{\sigma^{\ii\wedge\jj+1}\ii\wedge\1,\sigma^{\ii\wedge\jj+1}\jj\wedge\0\}, & \hbox{$i_{\ii\wedge\jj+1}+1=j_{\ii\wedge\jj+1}\ \&\ \varepsilon_{i_{\ii\wedge\jj+1}}=0\ \&\ \varepsilon_{j_{\ii\wedge\jj+1}}=0$,} \\
\min\{\sigma^{\ii\wedge\jj+1}\ii\wedge\0,\sigma^{\ii\wedge\jj+1}\jj\wedge\0\}, & \hbox{$i_{\ii\wedge\jj+1}+1=j_{\ii\wedge\jj+1}\ \&\ \varepsilon_{i_{\ii\wedge\jj+1}}=1\ \&\ \varepsilon_{j_{\ii\wedge\jj+1}}=0$,} \\
\min\{\sigma^{\ii\wedge\jj+1}\ii\wedge\0,\sigma^{\ii\wedge\jj+1}\jj\wedge\1\}, & \hbox{$i_{\ii\wedge\jj+1}+1=j_{\ii\wedge\jj+1}\ \&\ \varepsilon_{i_{\ii\wedge\jj+1}}=1\ \&\ \varepsilon_{j_{\ii\wedge\jj+1}}=1$,} \\
\min\{\sigma^{\ii\wedge\jj+1}\ii\wedge\1,\sigma^{\ii\wedge\jj+1}\jj\wedge\1\}, & \hbox{$i_{\ii\wedge\jj+1}+1=j_{\ii\wedge\jj+1}\ \&\ \varepsilon_{i_{\ii\wedge\jj+1}}=0\ \&\ \varepsilon_{j_{\ii\wedge\jj+1}}=1$,} \\
\min \{\sigma^{\ii\wedge\jj+1}\jj\wedge\1,\sigma^{\ii\wedge\jj+1}\ii\wedge\0\}, & \hbox{$j_{\ii\wedge\jj+1}+1=i_{\ii\wedge\jj+1}\ \&\ \varepsilon_{j_{\ii\wedge\jj+1}}=0\ \&\ \varepsilon_{i_{\ii\wedge\jj+1}}=0$,} \\
\min\{\sigma^{\ii\wedge\jj+1}\jj\wedge\0,\sigma^{\ii\wedge\jj+1}\ii\wedge\0\}, & \hbox{$j_{\ii\wedge\jj+1}+1=i_{\ii\wedge\jj+1}\ \&\ \varepsilon_{j_{\ii\wedge\jj+1}}=1\ \&\ \varepsilon_{i_{\ii\wedge\jj+1}}=0$,} \\
\min\{\sigma^{\ii\wedge\jj+1}\jj\wedge\0,\sigma^{\ii\wedge\jj+1}\ii\wedge\1\}, & \hbox{$j_{\ii\wedge\jj+1}+1=i_{\ii\wedge\jj+1}\ \&\ \varepsilon_{j_{\ii\wedge\jj+1}}=1\ \&\ \varepsilon_{i_{\ii\wedge\jj+1}}=1$,} \\
\min\{\sigma^{\ii\wedge\jj+1}\jj\wedge\1,\sigma^{\ii\wedge\jj+1}\ii\wedge\1\}, & \hbox{$j_{\ii\wedge\jj+1}+1=i_{\ii\wedge\jj+1}\ \&\ \varepsilon_{j_{\ii\wedge\jj+1}}=0\ \&\ \varepsilon_{i_{\ii\wedge\jj+1}}=1$,} \\
0, & \hbox{otherwise.}
\end{array}
\right.
\end{equation*}
\end{remark}

\hspace{3mm}
\noindent{\bf Acknowledgement.} B\'ar\'any was partially supported by
the grant OTKA K104745 and the grant EPSRC EP/J013560/1. Kiss was partially supported by the grant OTKA K104178 and the internal research project F1R-MTH-PUL-15MRO3 of the University of Luxembourg. Kolossv\'ary was partially supported by the Hungarian National E\"otv\"os Scholarship.

\bibliographystyle{abbrv}
\bibliography{deRhambiblio}

\end{document}